\documentclass[11pt,a4paper,reqno]{amsart}
\usepackage[T1]{fontenc}

\usepackage{amsmath,amsfonts,amssymb,mathtools,extarrows,mathrsfs}

\usepackage[hmargin=3cm,vmargin=3.5cm]{geometry}
\usepackage[dvipsnames]{xcolor}	
\usepackage[backref=page]{hyperref}
\usepackage{nameref,zref-xr}     

\usepackage{mathbbol}
\usepackage{cjhebrew}
\usepackage{pdftexcmds}

\usepackage{tikz}
\usepackage{float}
\usetikzlibrary{decorations.pathmorphing}

\hypersetup{
colorlinks=true, linktocpage=true, pdfstartpage=1, pdfstartview=FitV,breaklinks=true, pdfpagemode=UseNone, pageanchor=true, pdfpagemode=UseOutlines,plainpages=false, bookmarksnumbered, bookmarksopen=true, bookmarksopenlevel=1,hypertexnames=true, pdfhighlight=/O,urlcolor=webbrown, linkcolor=RoyalBlue, citecolor=ForestGreen}

\usepackage{enumerate}      

\usepackage{scalerel}         
\usepackage{stmaryrd}

\usepackage[all]{xy}

\usepackage[tickmarkheight=.5em,textwidth=0.7*\marginparwidth- 0pt,textsize=small]{todonotes}
\presetkeys%
    {todonotes}%
    {color=Apricot}{}%
\definecolor{pastelpurple}{RGB}{180,160,210}
\definecolor{pastelblue}{RGB}{150,180,210}
\definecolor{pastelgreen}{RGB}{180,220,160}
\definecolor{pastelgreen2}{RGB}{140,200,140}
\definecolor{pastelyellow}{RGB}{255,250,160}
\definecolor{pastelorange}{RGB}{255,200,140}
\definecolor{pastelgray}{RGB}{180,200,200}
\definecolor{pastelred}{RGB}{240,150,150}
\definecolor{pastelblack}{RGB}{80,80,80}
\definecolor{darkpastelorange}{RGB}{210,150,110}

\newtheorem{theorem}{Theorem}
\newtheorem{conjecture}{Conjecture}

\newtheorem{lemma}{Lemma}[section]
\newtheorem{proposition}[lemma]{Proposition}
\newtheorem{corollary}[lemma]{Corollary}

\theoremstyle{remark}

\newtheorem{remark}[lemma]{Remark}
\newtheorem{example}[lemma]{Example}

\theoremstyle{definition}

\newtheorem{definition}[lemma]{Definition}

\newtheorem{introthm}{Theorem}

\usepackage{color}

\def\Tr{\mathrm{Tr}\,}
\def\partitions{\mathscr{P}}
\def\symmgroup#1{\mathfrak{S}_{#1}}

\def\Q{\mathbb{Q}}
\def\Z{\mathbb{Z}}

\newcommand{\YoungDiagramFrob}[2][1]{%
\begin{tikzpicture}[scale=#1,baseline=(current bounding box.north)]

  \def\rows{#2}

  \foreach \len [count=\r from 1] in \rows {
    \foreach \c in {1,...,\len} {
      \draw (\c-1,-\r+1) rectangle (\c,-\r);
    }
  }

  \pgfmathsetmacro{\d}{0}
  \foreach \len [count=\r from 1] in \rows {
    \ifnum\len<\r\relax
    \else
      \xdef\d{\r}
    \fi
  }

  \ifnum\d>0
    \draw[thick] (0,0) -- (\d,-\d);
  \fi

  \foreach \len [count=\r from 1] in \rows {
    \ifnum\r>\d\relax
    \else
      \pgfmathsetmacro{\cx}{\r-0.5}
      \pgfmathsetmacro{\cy}{-\r+0.5}
      \pgfmathsetmacro{\xend}{\len}
      \draw[->,thick] (\cx,\cy) -- (\xend,\cy);
    \fi
  }

  \foreach \r in {1,...,\d} {
    \pgfmathsetmacro{\cx}{\r-0.5}
    \pgfmathsetmacro{\cy}{-\r+0.5}
    \pgfmathsetmacro{\last}{\r}
    \foreach \len [count=\k from 1] in \rows {
      \ifnum\len<\r\relax
      \else
        \xdef\last{\k}
      \fi
    }
    \pgfmathsetmacro{\yend}{-\last}
    \draw[->,thick] (\cx,\cy) -- (\cx,\yend);
  }

\end{tikzpicture}%
}

\numberwithin{equation}{section}

\begin{document}

\title{On the large genus of Hurwitz numbers}

 \author[D.~Accadia]{Davide Accadia}
 \address{D.~A.: Dipartimento di Matematica, Informatica e Geoscienze, Universit\`a degli studi di Trieste,
 	Via Weiss 2, 34128	Trieste, Italia \& Istituto Nazionale di Fisica Nucleare (INFN), Sezione di Trieste}
 \email{davide.accadia@phd.units.it}

\author[D.~Lewa\'nski]{Danilo Lewa\'nski}
\address{D.~L.: Dipartimento di Matematica, Informatica e Geoscienze, Universit\`a degli studi di Trieste,
	Via Weiss 2, 34128	Trieste, Italia \& Istituto Nazionale di Fisica Nucleare (INFN), Sezione di Trieste}
\email{danilo.lewanski@units.it} 

\author[G.~Ruzza]{Giulio Ruzza}
\address{G.~R.: CEMS.UL, Departamento de Ci\^encias Matemáticas, Faculdade de Ci\^encias da Universidade de Lisboa, Campo Grande Edifício C6, 1749-016, Lisboa, Portugal}
\email{gruzza@fc.ul.pt} 
	
\begin{abstract} 
Hurwitz theory provides a large variety of enumerative problems related to algebraic geometry, mathematical physics, and combinatorics. 
We give a general framework to approach the large genus asymptotics of Hurwitz theory using only elementary methods and apply it to several types of Hurwitz numbers: single, double, or with an arbitrary numbers of fixed ramifications; simple and/or including completed cycles type of ramification and/or finitely many blocks of weakly monotone and/or strictly monotone types of ramifications. These, to the best of our knowledge, cover most of the Hurwitz numbers studied, and include for instance correlators of the HCIZ matrix model, Grothendieck dessins d'enfant, weighted Hurwitz numbers, and Gromov--Witten invariants of the Riemann sphere. We also apply our method to $b$-content Hurwitz numbers. As a specialisation, we recover some previously known about the large genus asymptotics of Hurwitz theory, namely classical results by Hurwitz and recent results of Do--He--Robertson, C.~Yang, and results connected to recent work of X.~Li.
\end{abstract}

\maketitle
\tableofcontents

\section{Introduction}
\label{sec:intro}

Hurwitz theory was introduced by A.~Hurwitz in 1891 \cite{Hur} as the enumeration of genus $g$, degree $d$, compact ramified coverings of the Riemann sphere with only simple ramifications over fixed points, which by the Riemann--Hurwitz equation must be in number $2g - 2 + 2d$. Classical Hurwitz numbers are the result of this enumeration. Since their initial introduction a rich variety of other Hurwitz numbers have been defined, and a striking number of fundamental results involving relating them to integrable hierarchies of PDEs, the algebraic geometry of moduli spaces of curves, a number of new and known combinatorial recursions, a deep connection with Gromov--Witten theory, the topological recursion in the sense of B.~Eynard and N.~Orantin, and much more, has been discovered. 
In what follows we define the Hurwitz numbers that will concern this paper and we give a very brief summary of the links with other branches of mathematics and physics as well as what is known about their large genus asymptotics and the main results of the paper.

\subsection{Preliminaries on Hurwitz theory}
\label{sec:Hur}

For $d\in\Z_{\geq 0}$ let $\partitions_d$ be the the set of all (integer) partitions of $d$ and $\partitions=\bigcup_{d=0}^\infty\partitions_d$ the set of all partitions. We will use the standard notations $|\lambda|=d$ or $\lambda\vdash d$ to say that $\lambda\in\partitions_d$ and we will denote the length of a partition~$\lambda$ by $\ell(\lambda)$.

Let $\symmgroup d$ be the symmetric group on $d$ elements, $\Q [\symmgroup d] $ be its group algebra over the rationals, namely
\begin{equation}
\Q [\symmgroup d] = \biggl\lbrace  \sum^{s}_{i=1} a_i \sigma_i \; \bigg| \; a_i \in \Q, \, \sigma_i \in \symmgroup d \biggr\rbrace,
\end{equation}
and $Z(\Q[\symmgroup d])$ be its center.
For any $\sigma \in \symmgroup d$, let the cycle type of $\sigma$ be the partition $\mathrm{cyc}(\sigma)\vdash d$ whose parts are the lengths the disjoint cycles of $\sigma$.
For example, if $\sigma = (3 6 5 7)(2)(1 4 8) \in \symmgroup 8$, $\mathrm{cyc}(\sigma) = (4,3,1) \in\partitions_8$. 
For a given $\mu\vdash d$, the set
\begin{equation}
\mathfrak{C}_\mu =\bigl\lbrace \sigma\in\symmgroup d\;\big|\;\mathrm{cyc}(\sigma)=\mu\bigr\rbrace
\end{equation}
is a conjugacy class in $\symmgroup d$, and so the elements
\begin{equation}
\label{eq:Cmu}
\mathcal{C}_\mu = \sum_{\sigma \in\mathfrak{C}_\mu}\sigma
\end{equation}
(a priori, elements of $\Q[\symmgroup d]$) actually form a linear basis of $Z(\Q[\symmgroup d])$ (indexed by $\mu\vdash d$).
For more details on the group algebra and on the other notions we are about to present see, for instance, the book \cite{CM} and Appendix~\ref{sec:Jucys}.
Another linear basis of the center $Z(\Q[\symmgroup d])$, also indexed by $\partitions_d$, is formed by the idempotent elements $\mathcal{F}_{\lambda}$ ($\lambda\vdash d$), satisfying
\begin{equation}
\label{eq:idempotent}
\mathcal{F}_{\lambda} \,\mathcal{F}_{\eta} \,=\, \delta_{\lambda, \eta}\,\mathcal{F}_{\lambda}
\qquad (\lambda,\eta\vdash d).
\end{equation}
The change of basis between $\left\{ \mathcal{F}_{\lambda} \right\}_{\lambda \vdash d}$ and $\left\{ \mathcal{C}_{\mu} \right\}_{\mu \vdash d}$ follows from orthogonality of characters and is given by:
\begin{equation}\label{eq:change:bases:C:F}
    \mathcal{F}_\lambda \,=\,\frac{\dim(\lambda)}{d!} \sum_{\mu \vdash d} \chi_{\lambda}(\mathcal{C}_\mu)\, \mathcal{C}_{\mu}, \qquad \qquad \mathcal{C}_{\mu} \,=\, |\mathfrak{C}_{\mu}| \sum_{\lambda \vdash d} \frac{\chi_{\lambda}(\mathcal{C}_\mu)}{\dim(\lambda)} \,\mathcal{F}_{\lambda}.
\end{equation}
Here, $\chi_\lambda(\mathcal{C}_\mu)\in\Q$ is the value of the character of the irreducible representation of $\symmgroup d$ associated with the partition~$\lambda\vdash d$ on any permutation in the conjugacy class $\mathfrak{C}_\mu$ and $\dim\lambda=\chi_\lambda(\mathrm{id})$ is the dimension of the same irreducible representation.

Moreover, we need the Jucys--Murphy elements \cite{Jucys, Murphy} which are defined as formal sums of transpositions:
\begin{equation}
\mathcal{J}_1 \coloneqq 0 \in \Q[\symmgroup d], 
\qquad
\mathcal{J}_k \coloneqq \sum_{i = 1}^{k-1} (i \; k) \in \Q[\symmgroup d], \quad k = 2, \dots, d,
\end{equation}
where $(a \; b) \in \symmgroup d$ is the transposition exchanging $a$ and $b$.
They satisfy many interesting properties, which we summarise in Appendix~\ref{sec:Jucys}.

\subsection{Different types of Hurwitz numbers}

We now recall the definition of different types of Hurwitz numbers, divided into two main families: the first (Sections~\ref{sec:defclassicalHN}--\ref{sec:defcompletedcyclesHN}) involves \textit{completed cycles} ramifications, after the work of A. Okounkov and R. Pandharipande, and the second (Sections~\ref{sec:defhypergeometricHN}--\ref{sec:defmonotoneHN}) involves rational generating series evaluated at Jucys--Murphy elements, in which case we corresponding Hurwitz numbers are said to be \textit{hypergeometric}, after the work of  M.~Guay-Paquet, J.~Harnad, and A.~Orlov~\cite{GPH,HO}.

\subsubsection{Classical Hurwitz numbers}\label{sec:defclassicalHN}
These are the ones originally considered by A.~Hurwitz \cite{Hur} and they enumerate genus $g$ degree $d$ possibly disconnected ramified covers of the Riemann sphere with $r$ simple ramifications only, with a weight equal to the inverse of the order of the automorphism group of the cover.
By definition, a simple ramification is one with ramification profile $\tau=(2,1,1,\dots)\vdash d$ (only two sheets merge together over the ramification point), hence the classical Hurwitz numbers can be defined by 
\begin{equation}\label{eq:classical:H}
    H^{\bullet}_{r,d} = \frac{1}{d!} [\mathcal{C}_{(1^d)}].\left(\mathcal{C}_{\tau}\right)^r \in \Q_{\geq 0}.
\end{equation}
In this formula, the element $\mathcal{C}_{\tau}$ is, according to the general formula~\eqref{eq:Cmu}, the formal sum of all transpositions and the operator $[\mathcal{C}_{(1^d)}]$ extracts the coefficient of $\mathcal{C}_{(1^d)}$ after expanding the product $(\mathcal{C}_{\tau})^r$ in the linear basis $\bigl\lbrace \mathcal{C}_\mu\bigr\rbrace_{\mu\vdash d}$ of $Z(\mathbb{Q}[\symmgroup d])$. 
We note that the Riemann--Hurwitz formula implies the following relation of parameters
\begin{equation}
    r = 2g - 2 + 2d,
\end{equation}
and that the prefactor $1/d!$ is included in~\eqref{eq:classical:H} to account for all possible ways of labelling the $d$ cover sheets.
That formula~\eqref{eq:classical:H} corresponds to the above-mentioned enumeration problem follows from Riemann's existence theorem (note that the product of all monodromies over ramification points must give the trivial monodromy, as the Riemann sphere is simply connected); all details are, for example, in the book~\cite{CM}.

The \textit{connected} classical Hurwitz numbers $H^{\circ}_{r,d}$ are obtained from possibly disconnected ones $H^{\bullet}_{r,d}$ by the inclusion-exclusion formula or, equivalently, by the identity of generating series
\begin{equation}
\label{eq:logdisc}
    \sum_{r\geq 0}\sum_{d\geq 1}H^{\circ}_{r,d}\frac{x^r}{r!}y^d 
    = 
    \log  \left( 1\,+\,\sum_{r\geq 0}\sum_{d\geq 1} H^{\bullet}_{r,d}\frac{x^r}{r!}y^d\right).
\end{equation}

\subsubsection{Single, double, and $N$-tuple Hurwitz numbers}\label{sec:defmultHN}

Classical Hurwitz numbers can be generalised to single Hurwitz numbers, corresponding to a similar orbifold count of ramified covers (of genus $g$ and degree $d$) of the Riemann sphere in which one distinguished ramification point (say, the point~$0$ of the Riemann sphere) has an assigned ramification profile $\mu\vdash d$ while the remaining $r$ ramification points are simple. 
Accordingly, the single Hurwitz numbers can be defined by
\begin{equation}\label{eq:classical:H:disc}
    H^{\bullet}_{r}(\mu) = \frac{1}{d!
    \,|\mathfrak{C}_\mu|} [\mathcal{C}_{(1^d)}]. \mathcal{C}_{\mu}\left(\mathcal{C}_{\tau}\right)^r \in \Q_{\geq 0}.
\end{equation}
This time, the Riemann--Hurwitz formula reads
\begin{equation}
    r = 2g - 2 + d + \ell(\mu).
\end{equation}
Adding a second distinguished ramification point (say, the point $\infty$ of the Riemann sphere) with an assigned ramification profile $\nu \vdash d$ yields the \textit{double Hurwitz numbers}:
\begin{equation}\label{eq:classical:H:double}
    H^{\bullet}_{r}(\mu, \nu) = \frac{1}{d!
   \,|\mathfrak{C}_\mu|\,|\mathfrak{C}_\nu|
    } [\mathcal{C}_{(1^d)}]. \mathcal{C}_{\mu}\mathcal{C}_{\nu} \left(\mathcal{C}_{\tau}\right)^r \in \Q_{\geq 0},
\end{equation}
with
\begin{equation}
    r = 2g - 2 + \ell(\nu) + \ell(\mu).
\end{equation}
Generalising, given an arbitrary number $N \in \Z_{\geq 0}$ of ramification profiles $\mu^{(1)}, \dots, \mu^{(N)}\vdash d$ leads to $N$-tuple Hurwitz numbers:
\begin{equation}\label{eq:classical:H:N}
    H^{\bullet}_{r}(\mu^{(1)}, \dots, \mu^{(N)}) = \frac{1}{d!
    \,\prod_{j=1}^N\left|\mathfrak{C}_{\mu^{(j)}}\right|
    } [\mathcal{C}_{(1^d)}]. \left(\prod_{j=1}^N \mathcal{C}_{\mu^{(j)}} \right)\left(\mathcal{C}_{\tau}\right)^r \in \Q_{\geq 0},
\end{equation}
with the Riemann--Hurwitz formula
\begin{equation}
    r = 2g - 2 - d(N - 2) + \sum_{j=1}^N \ell(\mu^{(j)}).
\end{equation}
In all these cases, connected Hurwitz numbers $H^{\circ}_{r}$ can be defined from possibly disconnected ones $H^{\bullet}_{r}$ by inclusion-exclusion formula or, equivalently, by taking the logarithm of an appropriate generating function of disconnected Hurwitz numbers, as in~\eqref{eq:logdisc}.

\subsubsection{$(s+1)$-completed cycles ramifications}\label{sec:defcompletedcyclesHN}

For $s \in \Z_{\geq 1}$, the $(s+1)$-completed cycles are elements
\begin{equation}
    \overline{\mathcal{C}}_{s+1} \in \Q[\symmgroup d], \qquad \overline{\mathcal{C}}_{s+1} = \mathcal{C}_{(s+1, 1^{d-s-1})} + \text {l.o.t.},
\end{equation}
defined as a specific linear combination of elements $ \mathcal{C}_{\mu}$ where the \emph{leading term} represents a ramification with $(s+1)$ sheets of the cover merging together over a point of ramification index $s$, plus a tail of \emph{lower order terms}, i.e. involving strictly lower ramification index, whose coefficients are essentially given by the inverse of the hyperbolic sine function. These specific elements have proven to be the ones corresponding to the Gromov--Witten insertions in the stationary sector of the Riemann sphere, eventually leading to the resolution of the whole Gromov--Witten theory of smooth algebraic curves in the trilogy~\cite{OP1, OP2, OP3} by A.~Okounkov and R.~Pandharipande, to which we refer for more details.
Geometrically, they still enumerate certain linear combinations of Riemann surface covers, see for instance \cite{LPSZ, SSZ} and references within for more details.

Hurwitz numbers with $(s+1)$-completed cycles can be defined as
\begin{equation}\label{eq:classical:H:N:completed}
    H_{r}^{\bullet, s} (\mu^{(1)}, \dots, \mu^{(N)}) = \frac{1}{d!
    \,\prod_{j=1}^N\left|\mathfrak{C}_{\mu^{(j)}}\right|
    } [\mathcal{C}_{(1^d)}]. \prod_{j=1}^N \mathcal{C}_{\mu^{(j)}}(\overline{\mathcal{C}}_{s})^r \in \Q_{\geq 0},
\end{equation}
with \begin{equation}
    r s = 2g - 2 - d(N - 2) + \sum_{j=1}^N \ell(\mu^{(j)}).
\end{equation}
When $N=1$ or $N=2$ we refer to these numbers as single or double (respectively) $(s+1)$-completed cycles Hurwitz numbers. \\
When $s=1$, we have
\begin{equation}
    \overline{\mathcal{C}}_{2} = \mathcal{C}_{\tau} \in Z(\Q[\symmgroup d]),
\end{equation}
without any \textit{completion} needed, hence in this case we fall back to classical Hurwitz numbers of Section~\ref{sec:defclassicalHN}, corresponding to simple ramifications only.
Also, different parameters $s$ can be mixed in the same definition of Hurwitz numbers, giving the full connection with the stationary sector of the Gromov--Witten invariants of $\mathbb{CP}^1$ \cite{OP1}. \\
Finally, the connected version of these Hurwitz numbers, $H^{\circ, s}_{r}$, can be defined from possibly disconnected ones $H^{\bullet, s}_{r}$ by inclusion-exclusion formula.

\subsubsection{Hypergeometric-type Hurwitz numbers}\label{sec:defhypergeometricHN}
Over the last two decades, starting with the seminal work of A.~Okounkov~\cite{Ok}, a particularly fruitful perspective has emerged through the realization that many families of Hurwitz numbers are encoded by $\tau$-functions of integrable hierarchies, most notably the KP and 2D Toda hierarchies, see \cite{DHR, DYZ, K, CYang}.

With notation as before and for a given $G(z)\in 1+z\Q[[z]]$, one can define the \textit{hypergeometric Hurwitz numbers}
\begin{equation}
\label{eq:defHurwitz}
H_r^{\bullet, G}(\mu^{(1)}, \dots, \mu^{(N)})=\frac{1}{d!
\,\prod_{j=1}^N\left|\mathfrak{C}_{\mu^{(j)}}\right|
}\bigl[\mathcal{C}_{(1^d)}z^{r} \bigr]. \prod_{j=1}^N \mathcal{C}_{\mu^{(j)}}\prod_{i=1}^d G(z \mathcal{J}_i),
\end{equation}
where the power series $G$ gets formally evaluated at $z$ times Jucys--Murphy elements, all factors are then expanded into the group algebra and finally the operator $\bigl[\mathcal{C}_{(1^d)}z^{r} \bigr]$ counts how many of these products realise the identity permutation after the coefficient of the monomial $z^r$ is extracted. \\
In the work of M.~Guay-Paquet, J.~Harnad, and A.~Orlov \cite{GPH,HO}, hypergeometric $\tau$-functions were shown to provide a unifying framework for a broad class of weighted Hurwitz numbers, including monotone, strictly monotone, and Grothendieck dessins d'enfant enumerations, via content-product deformations associated with symmetric group representations. \\
In particular, we will work with an (almost) arbitrary rational function, which up to constant multiplication can be written as
\begin{equation}\label{eq:G}
G(z) \coloneqq
\frac{(1+u_1 z)\cdots(1+u_L z)}
{(1-v_1 z)\cdots(1-v_M z)}
\end{equation}
for some integers $M > 0$, $L \geq 0$, and some formal variables $u_1, \dots, u_L$, and $v_1, \dots, v_M$. These Hurwitz numbers again enumerate degree $d$ and genus $g$ Riemann surfaces coverings of the Riemann sphere, with $N$ fixed ramifications $\mu^{(j)}$ over fixed points on the base, and only simple ramifications elsewhere, but organised in a very specific way. The collection of transpositions is partitioned in $L+M$ blocks: in the first $L$ blocks the transpositions are strictly monotone while in the last $M$ blocks the transpositions are weakly monotone (or just monotone for short). In practice, if $P$ simple ramifications arise from a cover, we can always write them as transpositions,
\begin{equation}
    (a_1 \; b_1), \dots, (a_P \; b_P), \qquad \qquad a_i < b_i, \;\; i = 1, \dots, P,
\end{equation}
then, a weakly (strictly) monotone cover respects the following further constraint to be imposed on the transpositions 
\begin{equation}
    b_i \leq b_{i+1}, \; \text{(resp. } b_i < b_{i+1} ), \; \text{ for } \; i = 1, \dots, P-1.
\end{equation}
The monotone condition is reset at the end of each block of simple ramifications. The number of simple ramifications extracted from each block is determined by the exponent of the corresponding variable, for instance, the coefficient of $u_i^5$ enumerates all those covers with exactly $5$ simple ramifications from the $i$-th strictly monotone block of ramifications. For more details see \cite{ALS, HO, OSnumbers} and references therein. Again, the connected numbers are defined from their disconnected version by means of the inclusion--exclusion formula.

\subsubsection{Strictly and weakly monotone Hurwitz numbers}\label{sec:defmonotoneHN}
For strictly monotone Hurwitz numbers, i.e., one single strictly monotone block, let $G(z)=1+z$. In this case the Hurwitz numbers enumerate factorizations of the identity in~$\symmgroup d$ in transpositions $(a_i,b_i)$ satisfying $a_i<b_i$ and $b_i<b_{i+1}$. 
They are related to the correlators of the Gaussian Unitary Ensemble matrix model and to Grothendieck dessins d'enfants, see, e.g., \cite{ALS, BorotGarciaFailde} and references therein.

For monotone Hurwitz numbers, i.e., one single weakly monotone block, let $G(z)=(1-z)^{-1}$. In this case the Hurwitz numbers enumerate factorizations of the identity in~$\symmgroup d$ of transpositions $(a_i,b_i)$ satisfying $a_i<b_i$ and $b_i \leq b_{i+1}$. They are related to the correlators of the Harish-Chandra--Itzykson--Zuber matrix model (see section \ref{sec:HCIZ}). For more details on how Jucys--Murphy elements encode the monotone conditions on the ramifications and the correspondence between strictly monotone Hurwitz numbers and Grothendieck dessins see \cite{D}.
\begin{remark} Notice that
    $
        \sum_{i=1}^d \mathcal{J}_i = \mathcal{C}_\tau,
    $
    hence if $G(z)=\exp(z)$ we have \begin{equation}
    \prod_{i=1}^d G(z \mathcal{J}_i) = \exp\bigl(z \,\mathcal{C}_\tau\bigr),
    \end{equation}
    hence, classical Hurwitz numbers are an example of hypergeometric Hurwitz numbers.
\end{remark}
\begin{remark} The hypergeometric Hurwitz numbers corresponding to rational weight generating functions of the form $G(z)=(1+az)(1+z)$, $G(z)=\frac{1+az}{1-z}$, and $G(z)=\frac{(1+az)(1+z)}{1-bz}$ occur naturally in the correlators of the Laguerre, inverse Laguerre, and Jacobi Unitary Ensembles of Hermitian random matrices, respectively. For more details see, ~\cite{Gisonni1,Gisonni2}, and references therein.
\end{remark}

\subsubsection{Remarks and possible future directions} 
It is natural to compose different types of ramifications, including both completed cycles of different order and a generic rational or analytic function $G$ evaluated at the Jucys--Murphy elements.
Moreover, it is possible to enumerate coverings with slightly different weights that have certain geometric meaning. 
To name a few:
\begin{itemize}
\item \textit{Spin} Hurwitz numbers arise when additionally weighting coverings by a sign given by an associated spin structure lifted up from the basis \cite{GKL, GKLS}.
\item $b$-Hurwitz numbers introduce a complex parameter $b$ as a ``measure of non-orientability'', see, e.g., \cite{BCD1,BCD2,CD,FHKM,R}.
\item Different algebraic groups can replace  the symmetric group permuting the cover sheets \cite{Karev},
\item \textit{Real} Hurwitz numbers correspond to covers which are invariant with respect to a given complex conjugation \cite{Guidoni}.
\end{itemize}

One could also try to study combinations of the different ramification conditions we have seen in different context, like the spin, $b$-, real, etc. context, for the ramifications that make sense. This opens up a number of unexplored possibilities, many of which may now be at reach for what concerns the calculation of large genus asymptotics, by adapting the methods of the current work.

\subsection{Hurwitz numbers in terms of representation theory}

It is useful to express all of the above Hurwitz numbers using representation theory.
More precisely, we now express them explicitly as a sum over irreducible representations of evaluations of characters and other combinatorial ingredients arising from the chosen group of reference which, in this case, is the symmetric group $\symmgroup d$.
In the simple case (simple ramifications without any monotone condition) and arbitrary $N$ this formula is classical and known as Burnside formula.
The general strategy to obtain such formula is simply to apply the base change between the bases $\left\{ \mathcal{F}_{\lambda} \right\}_{\lambda \vdash d}$ and $\left\{ \mathcal{C}_{\mu} \right\}_{\mu \vdash d}$ given in~\eqref{eq:change:bases:C:F} and exploit the idempotency of the $\mathcal{F}_{\lambda}$-basis, see~\eqref{eq:idempotent}, simplifying the formula considerably. The hard part is to decompose the remaining factors of the formula, namely $G$ evaluated at the Jucys--Murphy elements or the completed cycles, into the idempotent basis. Let us now show how to achieve this formula for the cases we are interested in.

\subsubsection{Completed cycles Hurwitz numbers in terms of representation theory}
By applying the change of basis \eqref{eq:change:bases:C:F} on the $\mathcal{C}_{\mu}$ and using the shifted Frobenius coordinates defined in the Appendix \ref{sec:Jucys} one obtains the formula:
\begin{equation}
\begin{split}
H_r^{\bullet, s+1} &\bigl(\mu^{(1)},\dots,\mu^{(N)}\bigr) 
=
\\
&= \sum_{\lambda\vdash d} \biggl(\frac{\dim\lambda}{d!}\biggr)^2\biggl(\prod_{i=1}^N\frac{\chi_{\lambda}(\mathcal{C}_{\mu^{(i)}})}{\dim\lambda}\biggr) \biggl(\frac{1}{s+1}\sum_{i=1}^{R(\lambda)} \bigl( (a_i')^{s+1} - (-b_i')^{s+1} \bigr)\biggr)^{r},
\end{split}
\end{equation}
where $a_i', b_i'$ ($1\leq i\leq R(\lambda)$) are the shifted Frobenius coordinates of~$\lambda$, see Section~\ref{sec:Shifted:Frob}.

\subsubsection{Hypergeometric Hurwitz numbers in terms of representation theory}

By applying the change of basis \eqref{eq:change:bases:C:F} on the $\mathcal{C}_{\mu}$ and using equation \eqref{eq:Jucys:eigen} as well as the fact that $\mathcal{F}_\lambda$ are idempotent, see~\eqref{eq:idempotent}, one obtains the formula 
\begin{equation}
\label{eq:Frobenius}
H_r^{\bullet, G}\bigl(\mu^{(1)},\dots,\mu^{(N)}\bigr)
=
[z^r].\sum_{\lambda\vdash d} \biggl(\frac{\dim\lambda}{d!}\biggr)^2\biggl(\prod_{i=1}^N\frac{\chi_{\lambda}(\mathcal{C}_{\mu^{(i)}})}{\dim\lambda}\biggr)
\prod_{(i,j)\in Y_\lambda} 
G\bigl(z(j-i)\bigr).
\end{equation}
Here, $Y_\lambda$ is the diagram of $\lambda$, namely
\begin{equation}
    \label{eq:Ylambda}
    Y_\lambda = \bigl\lbrace (i,j)\in(\Z_{\geq 1})^2:\, 1\leq i\leq \ell(\lambda),\,1\leq j\leq \lambda_i\bigr\rbrace.
\end{equation}
Note that $G$ gets evaluated at the contents $j-i$ of the Young diagram of $\lambda$ and the coefficient of $z^r$ is collected after the taking the Taylor series at $z=0$. 
We stress that all sums and products involved in the formula are finite, hence the expression to which $[z^r]$ is applied is again a rational function of $z$ and of the formal variables $u_i$ and $v_j$ involved in the definition of $G(z)$ in \eqref{eq:G}.
\subsection{Large genus asymptotics of Hurwitz numbers and main results}

As the result of the many definitions of Hurwitz numbers, often motivated by different fields, Hurwitz theory became, over the last $30$ years, a rich topic occupying nowadays a central position at the interface of enumerative algebraic geometry, intersection theory on the moduli spaces of curves, Gromov--Witten theory, Mirror Symmetry, topology, the combinatorics of maps and graphs of Riemann surfaces, representation theory, symmetric functions, random matrix models, free probability, topological recursion and integrable hierarchies of integrable partial differential equations. 

The main issue of enumerative geometry is that, often,  problems that are interesting are complex enough that a closed formulae (e.g. without summing over partitions or permutations) is completely out of reach. There are two main approaches to deal with the lack of closed formulae: the first one is to construct efficient recursions, the second is control the asymptotics for large parameters. \\
The first approach is usually as precise as it is slow. For Hurwitz numbers there exist a variety of recursions arising from KP or 2D Toda, Cut-and-join equations of different type, topological recursion in the sense of B.~Eynard and N.~Orantin, computations via Fock space operators, and more, but these recursions do not reach very far in the genus parameter. \\
The second approach can be extremely powerful for large genus, although it discards a certain amount of information, and it can be quite a challenging result to establish.

There has been much work lately about establishing the large genus asymptotics for enumerative problems arising from algebraic geometry and theoretical physics, not too far from Hurwitz theory. This work has been carried out adapting a variety of techniques, from resurgence to probability, from topological recursion to integrability, which are interesting methods in themselves. For instance, the large genus of Witten--Kontsevich numbers have been studied (\cite{Agg,Psilarge, GY, MZ} and references within), the Brezin--Gross--Witten model \cite{GNYZ}, the Masur--Veech volumes \cite{theseven, CMS, YZZ} as well as the large genus of the Jackiw--Teitelboim gravity~\cite{JTlarge}. 

Some of these advanced techniques have been applied to Hurwitz theory, but without success. The point of the current work is that, surprisingly, simpler and classical techniques seem to work for Hurwitz theory. These techniques just involve their known expression in terms of representation theory and the well--known fact that the asymptotics of a Taylor series is controlled by the closest singularity. Under this consideration the representation theoretic expression greatly simplifies and the sum over partitions drops, leaving us with a controllable expression.

We now illustrate what was known about the large genus asymptotics of Hurwitz numbers, the recent developments by N.~Do, J.~He, and H.~Robertson \cite{DHR}, as well as by C.~Yang \cite{CYang}, and the results of the current paper. 

For simple Hurwitz numbers and $N=0$, Hurwitz proved that
\begin{theorem}[\cite{Hur}]
\begin{equation}
    H_r^{\circ} \sim \frac{2}{d!^2} \binom{d}{2}^{2g - 2 + 2d}
    \quad \text{ as } g \to \infty.
\end{equation}
\end{theorem}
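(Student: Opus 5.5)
The plan is to start from the Burnside formula (the case $N=0$, $G=\exp$ of \eqref{eq:Frobenius}), which reads
\begin{equation*}
H^{\bullet}_{r,d}=\sum_{\lambda\vdash d}\Bigl(\frac{\dim\lambda}{d!}\Bigr)^2 f_\lambda^{\,r},\qquad f_\lambda=\sum_{(i,j)\in Y_\lambda}(j-i),
\end{equation*}
since $\mathcal{C}_\tau=\sum_i\mathcal{J}_i$ acts on $\mathcal{F}_\lambda$ by the sum of contents. Here $d$ is fixed and $g\to\infty$, equivalently $r=2g-2+2d\to\infty$. The sum is finite, so the asymptotics is governed by the $\lambda$ maximizing $|f_\lambda|$.

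First I would show that $|f_\lambda|\le\binom d2$, with equality exactly for $\lambda=(d)$ (value $+\binom d2$) and $\lambda=(1^d)$ (value $-\binom d2$). Contents in distinct cells are distinct integers. The maximum of their sum is attained by the one-row diagram with contents $0,1,\dots,d-1$. For any other diagram, some cell lies in row $\ge2$, and its content is strictly smaller than what the one-row diagram places there; a simple exchange/induction argument makes this precise. The transposition $\lambda\mapsto\lambda'$ flips the sign of $f$, which handles the minimum. Both extremal diagrams have $\dim=1$.

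Since $r$ is even, these two contribute $2\binom d2^{r}/d!^2$, and every other $\lambda$ contributes $O(c^r)$ with $c<\binom d2$. Hence
\begin{equation*}
H^{\bullet}_{r,d}\sim\frac{2}{d!^2}\binom d2^{r}\quad\text{as } r\to\infty.
\end{equation*}
For odd $r$ the two terms cancel, consistent with the parity constraint from Riemann--Hurwitz.

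The remaining step is to pass from disconnected to connected numbers via \eqref{eq:logdisc}. Expanding the logarithm at fixed $d$ expresses $H^{\circ}_{r,d}/r!$ as $H^{\bullet}_{r,d}/r!$ plus a finite sum, over compositions $d=d_1+\dots+d_k$ with $k\ge2$, of convolutions of the form $\sum_{r_1+\dots+r_k=r}\prod_i H^{\bullet}_{r_i,d_i}/r_i!$. Each $H^\bullet_{r_i,d_i}$ is $O(\binom{d_i}{2}^{r_i})$. A convolution of exponential generating functions then grows like $r!^{-1}\bigl(\sum_i\binom{d_i}2\bigr)^r$, up to a polynomial factor in $r$. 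Since $\sum_i\binom{d_i}2<\binom d2$ whenever $k\ge2$ and all $d_i\ge1$ (the cross terms $d_id_j$ are positive), all correction terms are exponentially negligible. Therefore $H^\circ_{r,d}\sim H^\bullet_{r,d}$, which gives the theorem.

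The main obstacle I expect is this last step: bounding the logarithmic corrections cleanly. I would do it in exponential generating function language: each $\sum_r H^\bullet_{r,d_i}x^r/r!$ is a finite combination of $e^{f_\mu x}$, so products are combinations of $e^{(\sum_i f_{\mu_i})x}$ with exponents strictly below $\binom d2$ in absolute value. That makes the comparison exact rather than estimated.
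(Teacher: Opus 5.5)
Your proposal is correct and follows the paper's route. The paper's framework gets this as the case $s=1$, $N=0$ of Theorem~\ref{thm:large:g:completed}: there $\overline f_2(\lambda)$ is exactly the content sum and $M_{d,1}=\binom d2$, Lemma~\ref{lemma:CC} isolates $(d)$ and $(1^d)$, and the parity of $r=2g-2+2d$ produces the factor $2$. For the passage to connected numbers the paper only invokes inclusion--exclusion. Your exponential-generating-function argument makes that step rigorous: each $\sum_r H^\bullet_{r,d_i}x^r/r!$ is a finite combination of $e^{f_\lambda x}$, so the $k\geq 2$ terms of \eqref{eq:logdisc} only involve exponents of modulus at most $\sum_i\binom{d_i}{2}<\binom d2$.

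One sentence needs fixing. Contents of distinct cells are \emph{not} distinct: they are constant along diagonals, e.g.\ the cells $(1,1)$ and $(2,2)$ both have content $0$. So that remark cannot support your extremal claim, and ``a simple exchange argument'' should be made explicit. Either of the following works:
\begin{itemize}
\item Use $f_\lambda=\sum_i\bigl[\binom{\lambda_i}{2}-(i-1)\lambda_i\bigr]\leq\sum_i\binom{\lambda_i}{2}\leq\binom d2$, with equality iff $\lambda=(d)$, together with $f_{\lambda^T}=-f_\lambda$.
\item Use the paper's box-moving argument from Lemma~\ref{lemma:CC}: moving the last box of row $j$ to the end of row $i<j$ raises $f$ by $\lambda_i-\lambda_j+j-i+1>0$.
\end{itemize}

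A minor normalization point: the $G=\exp$ case of \eqref{eq:Frobenius} yields $H^\bullet_{r,d}/r!$, not $H^\bullet_{r,d}$. Your displayed Burnside formula is nonetheless the correct one for \eqref{eq:classical:H}.
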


Recently, N.~Do, J.~He, and H.~Robertson extended this result to~$N=1$.
\begin{theorem}[\cite{DHR}]\label{thm:DHR}
\begin{equation}
    H_r^{\circ}(\mu) \sim \frac{2}{d!^2} \binom{d}{2}^{2g - 2 + d + \ell(\mu)}
    \quad \text{ as } g \to \infty.
\end{equation}
\end{theorem}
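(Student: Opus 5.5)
The plan is to start from the Burnside-type formula \eqref{eq:Frobenius} with $N=1$ and $G=\exp$ (equivalently the completed-cycles formula with $s=1$). For the disconnected numbers this gives
\[
H^\bullet_r(\mu)=\sum_{\lambda\vdash d}\Bigl(\frac{\dim\lambda}{d!}\Bigr)^2\frac{\chi_\lambda(\mathcal C_\mu)}{\dim\lambda}\,\frac{\mathrm{cont}(\lambda)^r}{r!}\cdot r!,
\]
where $\mathrm{cont}(\lambda)=\sum_{(i,j)\in Y_\lambda}(j-i)$ is the eigenvalue of $\mathcal C_\tau$ on $\mathcal F_\lambda$. I will treat $d$ and $\mu$ as fixed and let $r\to\infty$, so that $g\to\infty$.

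The content sum satisfies $|\mathrm{cont}(\lambda)|\le\binom d2$, with equality exactly for $\lambda=(d)$ (value $+\binom d2$) and $\lambda=(1^d)$ (value $-\binom d2$). Every other partition has $|\mathrm{cont}|\le\binom d2-(d-1)$ or so, which is strictly smaller. Both extremal representations are one-dimensional, with $\chi_{(d)}(\mathcal C_\mu)=1$ and $\chi_{(1^d)}(\mathcal C_\mu)=\mathrm{sgn}(\mu)=(-1)^{d-\ell(\mu)}$. Hence
\[
H^\bullet_r(\mu)=\frac{1}{d!^2}\binom d2^r\Bigl(1+(-1)^{d-\ell(\mu)+r}\Bigr)+O\bigl(c^r\bigr),\qquad c<\binom d2 .
\]
By Riemann--Hurwitz, $r=2g-2+d+\ell(\mu)$, so $r+d-\ell(\mu)$ is even and the bracket equals $2$. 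This gives $H^\bullet_r(\mu)\sim\frac{2}{d!^2}\binom d2^{r}$.

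The remaining step, and the main obstacle, is passing from disconnected to connected numbers. I will write the inclusion--exclusion at the level of $\mathcal C_\mu$-weighted sums. A disconnected cover of degree $d$ splits the sheets into a set partition $\{S_1,\dots,S_k\}$ of $\{1,\dots,d\}$, the ramification profile $\mu$ splits accordingly, and the $r$ transpositions are distributed among the blocks.

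For a given set partition with $k\ge2$ blocks of sizes $d_1,\dots,d_k$, the total contribution is bounded by a multinomial sum over the distributions of the transpositions. Each block of size $d_i$ contributes at most $\binom{d_i}{2}^{r_i}$ times constants, so the whole term is at most
\[
C\Bigl(\sum_i\binom{d_i}2\Bigr)^r\le C\Bigl(\binom{d-1}{2}\Bigr)^r,
\]
since $\sum_i\binom{d_i}{2}\le\binom{d-1}2$ when $k\ge 2$ (because $\binom{d-1}{2}+0$ maximizes the sum for two parts). The number of set partitions and the combinatorial constants depend only on $d$, so all of these terms are exponentially smaller than $\binom d2^r$. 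The same bound applies to every term of the logarithm/Möbius inversion. Therefore
\[
H^\circ_r(\mu)=H^\bullet_r(\mu)+O\Bigl(\tbinom{d-1}2^r r^{d}\Bigr),
\]
and the claimed asymptotic follows.

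The care needed is in verifying that the normalizations match. The factor $1/(d!\,|\mathfrak C_\mu|)$ in \eqref{eq:classical:H:disc} and the $r!$ conventions in \eqref{eq:logdisc} must be consistent, so that the product structure over blocks holds with only $d$-dependent constants; the crude exponential bound $\bigl(\sum_i\binom{d_i}{2}\bigr)^r$ is insensitive to everything else.
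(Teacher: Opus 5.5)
Your proof is correct and follows the paper's route. The paper obtains this statement as the case $s=1$, $N=1$ of Theorem~\ref{thm:large:g:completed}: the Burnside formula is dominated by $\lambda=(d),(1^d)$, the Riemann--Hurwitz parity supplies the factor $2$, and connected numbers are then handled by inclusion--exclusion. Your bound $\sum_i\binom{d_i}{2}\le\binom{d-1}{2}$ on the disconnected contributions makes explicit the connected-versus-disconnected step, which the paper only asserts.
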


This work provides a generalisation in two different directions: from $s=1$ to arbitrary $s$, from $N=1$ to arbitrary $N$. Moreover, connected Hurwitz numbers are shown to be asymptotically equivalent to possibly disconnected Hurwitz numbers for large genus covers.

The first result provides a generalisation to completed cycles.

\begin{introthm}[see Theorem \ref{thm:large:g:completed}] We have 
\begin{equation}
    \begin{split}
        H_r^{\circ, s+1} &\bigl(\mu^{(1)},\dots,\mu^{(N)}\bigr) \sim H_r^{\bullet, s+1} \bigl(\mu^{(1)},\dots,\mu^{(N)}\bigr) \sim \frac{2}{d!^2} (M_{d,s})^{r} 
    \end{split}
\end{equation}
    as $g \to \infty$, for $sr = 2g - 2 - d(N-2) + \sum_{i=1}^N \ell(\mu^{(i)})$ and $$M_{d,s} = \frac{1}{s+1} \left[\left( d-\frac{1}{2} \right)^{s+1} - \left( -\frac{1}{2} \right)^{s+1} + (1 - 2^{-s-1})\zeta(-s-1) \right ].$$
    with $\zeta$ the Riemann zeta function.
\end{introthm}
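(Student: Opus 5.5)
Starting from the representation-theoretic formula for completed cycles Hurwitz numbers, the plan is to fix $d$ and the profiles $\mu^{(i)}$, so the sum over $\lambda\vdash d$ is finite. Write
$$\mathbf{p}_{s+1}(\lambda)=\frac{1}{s+1}\sum_{i=1}^{R(\lambda)}\bigl((a_i')^{s+1}-(-b_i')^{s+1}\bigr),$$
which in the Okounkov--Pandharipande normalisation should include the constant term $(1-2^{-s-1})\zeta(-s-1)/(s+1)$ coming from regularisation. Then $H^{\bullet,s+1}_r$ is a finite sum $\sum_\lambda c_\lambda\,\mathbf{p}_{s+1}(\lambda)^r$, with $c_\lambda=(\dim\lambda/d!)^2\prod_i\chi_\lambda(\mathcal C_{\mu^{(i)}})/\dim\lambda$. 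As $g\to\infty$ we have $r\to\infty$, so the asymptotics are governed by the $\lambda$ maximising $|\mathbf{p}_{s+1}(\lambda)|$.

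\textbf{Step 1.} Show that the maximum of $|\mathbf{p}_{s+1}|$ is attained exactly at $\lambda=(d)$ and $\lambda=(1^d)$, with values $M_{d,s}$ and $(-1)^{s+1}M_{d,s}$, and that every other $\lambda$ gives a strictly smaller modulus. For $(d)$ the only shifted Frobenius coordinates are $a_1'=d-\tfrac12$ and $b_1'=\tfrac12$, which yields $M_{d,s}$. For the argument, I would use the alternative expression
$$\mathbf{p}_{s+1}(\lambda)=\frac{1}{s+1}\sum_{(i,j)\in Y_\lambda}\bigl[(c+\tfrac12)^{s+1}-(c-\tfrac12)^{s+1}\bigr]+\text{const},\qquad c=j-i.$$
Each summand is an increasing function of $|c|$ up to sign: it is an even polynomial in $c$ when $s+1$ is odd, and odd when $s+1$ is even. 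The multiset of contents of $(d)$ is $\{0,\dots,d-1\}$, and $(d)$ and its conjugate are the only diagrams whose contents are all of one sign and maximal in absolute value. Every other diagram has a content multiset that is dominated termwise in absolute value, with at least one strict inequality. This majorisation is the main obstacle. I would prove it by observing that the $k$-th largest $|c|$ in any diagram is at most $d-k$, since a box of content $c$ needs $|c|$ further boxes on its hook path to the corner. In the odd-$s+1$ case one must also control cancellations between positive and negative contents; the strict triangle inequality handles this, since mixed signs only decrease the modulus.

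\textbf{Step 2.} Compute the surviving contributions. For $\lambda=(d)$ and $(1^d)$ we have $\dim=1$ and $\chi=\pm1$, with sign $\mathrm{sgn}(\mu^{(i)})$ for $(1^d)$. The contribution is therefore
$$\frac{1}{d!^2}\Bigl(M_{d,s}^r+\prod_i\mathrm{sgn}(\mu^{(i)})\,\bigl((-1)^{s+1}M_{d,s}\bigr)^r\Bigr).$$
By the Riemann--Hurwitz parity constraint $sr\equiv\sum_i(d-\ell(\mu^{(i)}))\pmod 2$, the total sign equals $+1$, so the two terms add and give $\tfrac{2}{d!^2}M_{d,s}^r$. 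All remaining terms are $O(\rho^r)$ with $\rho<M_{d,s}$, which proves the disconnected asymptotics.

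\textbf{Step 3.} Pass from disconnected to connected numbers. By inclusion--exclusion, $H^\bullet-H^\circ$ is a finite sum, over set partitions of the $d$ sheets into at least two blocks of sizes $d_k<d$, of products of connected numbers. The numbers of branch points are distributed among the factors with multinomial weights $\binom{r}{r_1,\dots}$. Bounding each factor by Step 1 applied to degree $d_k$ gives a bound of the form $C\bigl(\sum_k M_{d_k,s}\bigr)^r$. I then need $\sum_k M_{d_k,s}<M_{d,s}$ whenever $\sum_k d_k=d$ with at least two parts. This should follow from superadditivity of the top contents: $(d)$ splits into the row contents $0,\dots,d_1-1$ and $0,\dots,d_2-1$, which are strictly smaller than the contents of $(d)$, apart from the constant term, which I would check is dominated for $d\ge2$. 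Hence $H^\circ\sim H^\bullet$.
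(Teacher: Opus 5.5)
Your skeleton matches the paper's: a finite exponential sum over $\lambda\vdash d$, dominance of $(d)$ and $(1^d)$, Riemann--Hurwitz parity for the sign, then inclusion--exclusion. Your proof of the dominance is genuinely different from Lemma~\ref{lemma:CC}. The paper uses box moves for even index, and for odd index merges Frobenius coordinates into a hook. You telescope to $\overline f_{s+1}(\lambda)=\frac1{s+1}\bigl(S(\lambda)+c_{s+1}\bigr)$, where $S(\lambda)=\sum_{\square\in Y_\lambda}g(c)$ and $g(c)=(c+\frac12)^{s+1}-(c-\frac12)^{s+1}$, and then use that the $k$-th largest $|c|$ is at most $d-k$. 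State this precisely: if that box has content $\pm m$, the boxes between it and the diagonal in its row or column carry $|c|=0,\dots,m-1$, so they are disjoint from the top $k$.

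Your route treats both parities at once. It does not, however, locate the runner-up $(d-1,1)$, $(2,1^{d-2})$, which the paper needs for the gap in Theorem~\ref{thm:structure:disconnected}. Your Step~3 is more explicit than the paper's one-line appeal to inclusion--exclusion. Since the factors there are connected numbers, run it as an induction on $d$ (Step~1 only bounds disconnected numbers), and compare $\sum_k|M_{d_k,s}|$ with $|M_{d,s}|$.

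There are also two slips. First, $\overline f_{s+1}((1^d))=(-1)^sM_{d,s}$, not $(-1)^{s+1}M_{d,s}$; for $s=1$ it is $-\binom d2$. With your exponent, the total sign after Riemann--Hurwitz would be $(-1)^r$, not $+1$. Second, the parities are swapped. Cancellations between contents of opposite sign occur for $s+1$ even, where $g$ is odd and $c_{s+1}=0$. For $s+1$ odd, $g>0$, and the new feature is $c_{s+1}\neq0$.

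\textbf{That constant is a genuine gap.} For $s$ even, Step~1 only shows that $S(\lambda)$ is strictly maximal at $(d),(1^d)$. This controls $|\overline f_{s+1}|$ only under an extra condition such as $c_{s+1}\ge0$ or $2|c_{s+1}|<(d-\frac12)^{s+1}$. In Step~3, a $k$-component term has base $\frac1{s+1}\bigl(\sum_i S(\lambda^{(i)})+k\,c_{s+1}\bigr)$, so the extra $(k-1)c_{s+1}$ has to be beaten by the superadditivity gain. Since $|\zeta(-s-1)|$ grows factorially, your ``check that the constant is dominated for $d\ge2$'' fails. Two examples:
\begin{itemize}
\item For $N=0$, $d=2$ and $s$ even one has exactly $H^{\bullet,s+1}_r=\frac12M_{2,s}^r$ and $H^{\circ,s+1}_r=\frac12\bigl(M_{2,s}^r-(2M_{1,s})^r\bigr)$. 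For $s=26$, $c_{27}\approx9.75\cdot10^5$ exceeds $(3/2)^{27}\approx5.7\cdot10^4$. Hence $2M_{1,26}>M_{2,26}>0$ and $H^\circ\not\sim H^\bullet$.
\item For $d=3$, $s=40$, the value $\zeta(-41)\approx-2.00\cdot10^{16}$ nearly cancels $(5/2)^{41}\approx2.07\cdot10^{16}$. So $|\overline f_{41}((2,1))|>|\overline f_{41}((3))|$, and even the disconnected asymptotics fail.
\end{itemize}

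The paper's proof has the same blind spot. In the odd-index case of Lemma~\ref{lemma:CC} it shows that $\overline f_s$ increases, not $|\overline f_s|$, and the connected statement is only asserted. So, after the fixes, your argument proves the statement for $s$ odd, where $c_{s+1}=0$. For $s$ even you must add an explicit smallness hypothesis on $|c_{s+1}|$ relative to $d$, and carry it through Steps~1 and~3. Such a hypothesis holds, for example, for fixed $s$ and $d$ large.
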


For $s=1$ this result appears in \cite{XiangLi}, and for $(s+1)$-\emph{not completed cycles} in \cite{XiangLi_r}, derived with similar techniques, independently at the same time. Notice that $M_{d,1} = \binom{d}{2}.$ On the other hand, for $s=1$ \cite{DHR} provides the structure theorem
\begin{equation}
    H_r^{\circ}(\mu) = \frac{2}{d!^2} \sum_{1 \leq m \leq \binom{d}{2}} C^{\circ}(\mu, m) m^{2g - 2 + d + \ell(\mu)}.
\end{equation}
The leading coefficient is there computed as
\begin{equation}
    C^{\circ}\left(\mu, \binom{d}{2}\right) = C^{\circ}\left(\mu, M_{d,1} \right) = 1,
\end{equation}
clearly implying Theorem~\ref{thm:DHR} in the same paper. Furthermore, the following coefficient gap is conjectured.
\begin{conjecture}[{\cite[Conjecture 4.1]{DHR}}]\label{conj:DHR}
    \begin{equation}
    C^{\circ}\left(\mu, m \right) = 0, \qquad \text{ for } \binom{d-1}{2} < m < \binom{d}{2}.
\end{equation}
\end{conjecture}
The conjecture is solved positively in~\cite{CYang}.

\begin{theorem}[{\cite[Theorem 1.3]{CYang}}]
    Conjecture \ref{conj:DHR} holds true and moreover
    \begin{equation}
        C^{\circ} \left(\mu, \binom{d-1}{2}\right) = -d\cdot m_1(\mu),
    \end{equation}
    where $m_i(\mu)$ is the multiplicity of parts equal to $i$ inside the partition $\mu$.
\end{theorem}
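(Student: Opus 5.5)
The plan is to work with the generating function of possibly disconnected single Hurwitz numbers in the power-sum variables $p_1,p_2,\dots$, take its logarithm to pass to connected numbers, and track which exponentials $e^{mx}$ can appear in the logarithm. By the Frobenius formula \eqref{eq:Frobenius} with $N=1$ and $G=\exp$, the disconnected numbers are finite sums over $\lambda\vdash d$ of terms proportional to $\chi_\lambda(\mathcal C_\mu)\dim\lambda\cdot c(\lambda)^r$. Here $c(\lambda)=\sum_{(i,j)\in Y_\lambda}(j-i)$ is the total content, which is the eigenvalue of $\mathcal C_\tau$ on $\mathcal F_\lambda$. Equivalently,
\begin{equation*}
Z(x;p)=\sum_{d\geq 0}\sum_{\lambda\vdash d}\frac{\dim\lambda}{d!}\,s_\lambda(p)\,e^{x\,c(\lambda)}
\end{equation*}
is, up to the normalisations of \eqref{eq:classical:H:disc}, the exponential generating series in $x$ of the $H^\bullet_r(\mu)$, with $\mu$ read off from the monomial $p_\mu$. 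The connected numbers are then the coefficients of $\log Z$, as in \eqref{eq:logdisc}. Expanding the logarithm, the degree-$d$ part of $\log Z$ is a finite linear combination of products $\prod_k \frac{\dim\lambda^{(k)}}{d_k!}s_{\lambda^{(k)}}(p)$ with $\sum_k d_k=d$. Each product carries the exponential $e^{x\sum_k c(\lambda^{(k)})}$. This recovers the structure theorem, with exponents $m=\sum_k c(\lambda^{(k)})$.

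The first step is a purely combinatorial bound on total contents. Moving a box between rows shows that $c(\lambda)\le\binom{|\lambda|}{2}$, with equality only for $\lambda=(|\lambda|)$. Moreover, for every $\lambda\vdash n$ with $\lambda\neq(n)$ one has $c(\lambda)\le c(n-1,1)=\binom{n}{2}-n=\binom{n-1}{2}-1$. For a multi-block term with $k\ge2$ and degrees $d_1+\dots+d_k=d$, superadditivity of $\binom{\cdot}{2}$ gives $\sum_k c(\lambda^{(k)})\le\sum_k\binom{d_k}{2}\le\binom{d-1}{2}$. Equality holds only for $k=2$, $\{d_1,d_2\}=\{d-1,1\}$ and both diagrams one-row. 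Hence the only exponent strictly above $\binom{d-1}{2}$ is $m=\binom d2$, coming from the single block $\lambda=(d)$. This already gives Conjecture~\ref{conj:DHR}. It also shows that the coefficient at $m=\binom{d-1}{2}$ receives exactly one contribution: the quadratic term $-\tfrac12 Z_1^2$ of the logarithm, restricted to the cross terms between $\lambda=(d-1)$ and $\lambda=(1)$. Note that $c(d-1,1)=\binom{d-1}{2}-1$ is strictly smaller and does not interfere.

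The second step is to compute that cross term. For one-row diagrams, $\frac{\dim(n)}{n!}s_{(n)}(p)=\frac1{n!}h_n(p)=\frac1{n!}\sum_{\nu\vdash n}p_\nu/z_\nu$. The ordered pairs (degree $d-1$, degree $1$) and (degree $1$, degree $d-1$) give $2\cdot(-\tfrac12)=-1$ times $\frac1{(d-1)!}h_{d-1}(p)\,p_1$. Extracting $p_\mu$ from $p_1h_{d-1}$ gives $\frac{1}{z_{\mu\setminus 1}}=\frac{m_1(\mu)}{z_\mu}$. The leading term is $\frac1{d!}h_d$, whose $p_\mu$-coefficient is $\frac1{z_\mu}$. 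The ratio of the two coefficients is therefore
\begin{equation*}
-\frac{d!}{(d-1)!}\,m_1(\mu)=-d\,m_1(\mu).
\end{equation*}
Since the leading coefficient is normalised as $C^\circ(\mu,\binom d2)=1$, this gives $C^\circ(\mu,\binom{d-1}2)=-d\,m_1(\mu)$.

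The main obstacle is bookkeeping rather than ideas. One has to match the normalisations of \eqref{eq:classical:H:disc} (the factors $1/d!$ and $1/|\mathfrak C_\mu|$, and $z_\mu=d!/|\mathfrak C_\mu|$) and the connected-versus-disconnected convention of \eqref{eq:logdisc} with the normalisation in which \cite{DHR} state the structure theorem with prefactor $2/d!^2$. I would fix the convention by checking that $\lambda=(d)$ reproduces $C^\circ(\mu,\binom d2)=1$, and then only compute ratios, as above. The inequality $c(\lambda)\le\binom{n-1}{2}-1$ for $\lambda\ne(n)$ needs a short but careful argument. I would use that moving the last box of a lower row to the end of the first row strictly increases the total content.
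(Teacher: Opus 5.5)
Your proof is correct, and it takes a genuinely different route from the paper. The paper gives no proof of this statement; it is quoted from \cite{CYang}. The paper's own connected result, Proposition~\ref{thm:structure:connected}, substitutes the already folded disconnected structure theorem (positive $m$ only, prefactor $2$, parity-constrained $r_i$) into each factor of the logarithm. That leaves multinomial sums of products $\prod_i m_i^{r_i}$, which, as the authors remark, do not specialise to the connected gap.

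You instead take the logarithm of the unfolded exponential generating function $\sum_\lambda\frac{\dim\lambda}{|\lambda|!}s_\lambda(p)e^{xc(\lambda)}$. There, products of exponentials collapse to $e^{x\sum_k c(\lambda^{(k)})}$, so the connected numbers are directly finite combinations of $m^r$, with $m$ a sum of total contents over a tuple of diagrams. The bound $\sum_k\binom{d_k}{2}\le\binom{d-1}{2}$ for $k\ge 2$, together with its equality case, then isolates the unique subleading term. That term is the one-row diagram of size $d-1$ paired with a single unramified sheet (a factor with no branch points, $r_i=0$). An elementary computation then gives both the gap and the value $-d\,m_1(\mu)$. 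What your route buys:
\begin{itemize}
\item every connected coefficient becomes an explicit finite sum over tuples of diagrams;
\item it explains why the connected gap is one unit shorter than the disconnected one in Theorem~\ref{thm:structure:disconnected};
\item it extends directly to several fixed profiles.
\end{itemize}

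Two bookkeeping points should be made explicit in a write-up.

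First, $\log Z$ has exponents in $[-\binom d2,\binom d2]$, whereas \cite{DHR} sum over $1\le m\le\binom d2$ with a prefactor $2$. Transposing diagrams shows that $\log Z$ is invariant under $x\mapsto -x$, $p_\nu\mapsto(-1)^{|\nu|-\ell(\nu)}p_\nu$, so $a_{-m}=(-1)^{d-\ell(\mu)}a_m$. Since $r\equiv d+\ell(\mu)\pmod 2$, the coefficient of $m^r$ is $2a_m$ for every $m>0$. Such a representation is unique: the $m^2$ are distinct, so a Vandermonde argument over $g\ge 0$ applies. This identifies $C^\circ(\mu,m)$ with $a_m$ up to an $m$-independent factor. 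That identification is what justifies computing only ratios, and it also disposes of the negative half of the gap.

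Second, you need $d\ge 3$, so that $(d-1)\neq(1)$ and the two ordered pairs are distinct. For $d=2$ the exponent $\binom{d-1}{2}=0$ does not occur in the structure theorem anyway.
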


This work provides a similar result for possibly disconnected Hurwitz numbers, generalised in three different directions: from $s=1$ to arbitrary $s$, from $N=1$ to arbitrary $N$, and from the computation of the coefficient gap and the first subleading coefficient to the computation in closed form of all coefficients in terms of polynomials of the partitions multiplicities.

\begin{introthm}[see Theorem \ref{thm:structure:disconnected}]
\begin{equation}
    \begin{split}
        H_r^{s+1, \bullet}&(\mu^{(1)}, \dots, \mu^{(N)}) = \frac{2}{d!^{2}}
        \sum_{
        m_i \in \overline{f}_s(\mathcal{P}_{d_i})} 
        C_s^{\bullet}(\vec{\mu}, m) \cdot m^{r}.
    \end{split}
\end{equation}
    In particular leading coefficient reads
    \begin{equation}
        C_s^{\bullet}\left(\vec{\mu}, M_{d,s} \right) = 1,
    \end{equation}
    and we have the coefficient gap
    \begin{equation}
        C_s^{\bullet}(\vec{\mu}, m) = 0, \quad \text{ for} \quad   \overline{f}_{s+1}((d-1)) - 1 < m < \overline{f}_{s+1}((d)).
    \end{equation}
\end{introthm}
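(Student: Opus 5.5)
This theorem is proved directly from the representation-theoretic (Burnside-type) formula for completed cycles Hurwitz numbers stated above:
\[
H_r^{\bullet,s+1}(\vec\mu)=\sum_{\lambda\vdash d}\Bigl(\frac{\dim\lambda}{d!}\Bigr)^2\prod_{i=1}^N\frac{\chi_\lambda(\mathcal{C}_{\mu^{(i)}})}{\dim\lambda}\,\overline{f}_{s+1}(\lambda)^r,
\]
where $\overline{f}_{s+1}(\lambda)=\frac{1}{s+1}\sum_i\bigl((a_i')^{s+1}-(-b_i')^{s+1}\bigr)$ is the eigenvalue of the completed cycle on $\mathcal{F}_\lambda$, including the constant $\zeta$-correction. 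I read the notation $\overline{f}_s(\mathcal{P}_{d_i})$ in the statement as the image of $\mathcal{P}_d$ under $\overline{f}_{s+1}$ (index shift aside). The plan is then simply to group the sum by the value $m=\overline{f}_{s+1}(\lambda)$. This gives the structure formula with
\[
C_s^\bullet(\vec\mu,m)=\frac{d!^2}{2}\sum_{\lambda:\,\overline f_{s+1}(\lambda)=m}\Bigl(\frac{\dim\lambda}{d!}\Bigr)^2\prod_i\frac{\chi_\lambda(\mathcal{C}_{\mu^{(i)}})}{\dim\lambda}.
\]
This is a finite sum of explicit character ratios. It can also be written as a polynomial in the multiplicities, using the standard formula for central characters of cycles $\chi_\lambda(\mathcal C_\mu)/\dim\lambda$ when $\lambda$ is a hook or near-hook.

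Next, I identify where $\overline{f}_{s+1}$ attains its extreme values. Since the constant term is common to all $\lambda$, only $p(\lambda)=\sum_i\bigl((a_i')^{s+1}-(-b_i')^{s+1}\bigr)$ matters, and $p(\lambda)$ is the sum over boxes of the power sums of contents. Its maximum over $\lambda\vdash d$ is attained uniquely at $\lambda=(d)$. Indeed, $p(\lambda)=\sum_{\square}\bigl((c+\tfrac12)^{s+1}-(c-\tfrac12)^{s+1}\bigr)$, a sum of increasing functions of the contents $c$ (this monotonicity requires a parity check for even $s+1$). The row $(d)$ has the contents $0,\dots,d-1$, which dominate the sorted contents of any other diagram. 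Evaluating at $\lambda=(d)$ gives $M_{d,s}$.

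For the leading coefficient, the row contributes $\dim=1$ and $\chi=1$, so its term is $1/d!^2$. The column $(1^d)$ contributes the value $\overline f_{s+1}((1^d))=(-1)^{s+1}M_{d,s}$ up to the constant, with sign characters $\pm1$. When this coincides with $M_{d,s}$, which happens for $s$ odd, the two terms combine, giving the factor $2$ with coefficient exactly $1$. Parity constraints from the Riemann--Hurwitz relation ensure the signs agree. The case where $s$ is even requires separate attention, and I expect to handle it by the same sign bookkeeping. I will state this carefully, since the normalisation $\frac{2}{d!^2}$ suggests that the row and column pair up.

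For the gap, I must show that the second largest value of $\overline f_{s+1}$ on $\mathcal P_d$ is at most $\overline f_{s+1}((d-1))-1$, up to the constant. Any $\lambda\neq(d),(1^d)$ has a box outside the first row, so its content multiset is dominated by $\{0,\dots,d-2,-1\}$, the contents of $(d-1,1)$. Hence $p(\lambda)\le p((d-1,1))=p((d-1))+p_{\text{box}}(-1)$, and $p_{\text{box}}(-1)=(-\tfrac12)^{s+1}-(-\tfrac32)^{s+1}$, which is bounded by $\pm1$ after dividing by $s+1$. Establishing this domination and the value comparison is the main obstacle. It requires the monotone-increment argument to hold uniformly in $s$, including when negative contents give contributions of either sign for even $s+1$, and it requires handling the conjugate diagrams by the symmetry $\overline f_{s+1}(\lambda')=(-1)^{s+1}\overline f_{s+1}(\lambda)$.
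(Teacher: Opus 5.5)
Your overall strategy (regroup the Burnside-type sum by the eigenvalue $\overline f_{s+1}(\lambda)$, locate the extremal partitions, and combine row and column via Riemann--Hurwitz parity) is the paper's. The leading-coefficient step, however, rests on a sign error. Transposition swaps $a_i'$ and $b_i'$, so $\overline f_{s+1}(\lambda^T)=(-1)^{s}\,\overline f_{s+1}(\lambda)$ (for $s$ odd one has $c_{s+1}=0$). Hence $\overline f_{s+1}((1^d))=(-1)^sM_{d,s}$, not $(-1)^{s+1}M_{d,s}$: already for $s=1$, $\overline f_2$ is the sum of contents and $(1^d)$ gives $-\binom{d}{2}$. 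Your two cases are therefore swapped. For $s$ odd the row and column sit at $\pm M_{d,s}$. With your $C_s^\bullet$, which groups by the exact value $m$, you get $C_s^\bullet(\vec\mu,M_{d,s})=\tfrac12$, with the other half sitting at $m=-M_{d,s}$. The missing step is the one the paper performs. Pair $\lambda$ with $\lambda^T$, using $\dim\lambda^T=\dim\lambda$, $\chi_{\lambda^T}(\mathcal C_\mu)=(-1)^{d-\ell(\mu)}\chi_\lambda(\mathcal C_\mu)$ and $(-m)^r=(-1)^rm^r$. Riemann--Hurwitz with $s$ odd gives $r\equiv Nd-\sum_i\ell(\mu^{(i)})$ modulo $2$, so each pair contributes $\bigl(1+(-1)^{r+Nd-\sum_i\ell(\mu^{(i)})}\bigr)=2$ times the $\lambda$-term. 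Accordingly $C_s^\bullet$ must be defined through $\overline f_{s+1}(\lambda)=\pm m$, as in Theorem~\ref{thm:structure:disconnected}. The case you deferred, $s$ even, is the easy one: the two values coincide, and $rs$ even forces $Nd-\sum_i\ell(\mu^{(i)})$ to be even.

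The ordering of the values of $\overline f_{s+1}$ (the paper's Lemma~\ref{lemma:CC}) is the other gap. You name it the main obstacle and leave it open, and the parity problem in your sketch is again reversed. Writing $\overline f_{s+1}$ as a constant plus a sum over boxes of $\phi(c)=(c+\tfrac12)^{s+1}-(c-\tfrac12)^{s+1}$ is a good idea. It gives a different route from the paper's, which moves boxes between rows for one parity and merges hooks in shifted Frobenius coordinates for the other. For $s$ odd, $\phi$ is increasing and your termwise domination of sorted contents by those of $(d)$, resp.\ $(d-1,1)$, does work, since at most $d-t$, resp.\ $d-1-t$, boxes have content $\ge t$. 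For $s$ even, $\phi$ is \emph{even} (e.g.\ $3c^2+\tfrac14$ at $s=2$), so you must dominate sorted \emph{absolute} contents instead. A box with $c\ge t$ forces the boxes $(1,1),\dots,(1,t)$, and a box with $c\le -t$ forces $(1,1),\dots,(t,1)$. This gives at most $d-t$ boxes with $|c|\ge t$, and at most $d-1-t$ for $t\ge 2$ when $\lambda\neq(d),(1^d)$. Finally, your endpoint comparison is false beyond $s=1$. The quantity $\frac{(-1/2)^{s+1}-(-3/2)^{s+1}}{s+1}$ equals $-1$ only at $s=1$, is at most $-\tfrac54$ for odd $s\ge 3$, and is positive for even $s$ ($\tfrac{13}{12}$ at $s=2$). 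So for even $s$ one has $\overline f_{s+1}((d-1,1))>\overline f_{s+1}((d-1))-1$, and your target range contains the in general non-zero coefficient at $m=\overline f_{s+1}((d-1,1))$. The gap should be proved as in Theorem~\ref{thm:structure:disconnected}, for $\overline f_{s+1}((d-1,1))<m<\overline f_{s+1}((d))$. That range coincides with the stated one only for $s=1$.
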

Notice that the vanishing gap range for possibly disconnected numbers is one unit longer, when specialised to $s=1$, with respect to the known gap range for connected numbers. Again for $s=1$ this result appears in \cite{XiangLi}, and for $(s+1)$-\emph{not completed cycles} in \cite{XiangLi_r}, derived with similar techniques, independently at the same time.

By means of the inclusion-exclusion formula, the result can be transferred from possibly disconnected to connected numbers. This is achieved in Proposition \ref{thm:structure:connected}. Since the inclusion-exclusion formula expresses connected numbers as \emph{polynomials} in the possibly disconnected numbers, the resulting structure statement is more complex and does not immediately specialise to the connected structure theorem above for $s=1$.

For monotone Hurwitz numbers the work \cite{DHR} also proposed a conjectural structure theorem, that has been established in \cite{CYang} by analysing the KP integrability structure (after correcting the conjecture). This implies the following large genus asymptotics.

\begin{theorem}[{\cite[Proposition 1.2]{CYang}}] For $M=1, L=0$ and $N=1$ we have 
    \begin{equation}
        H^{G, \circ}_r(\mu) \sim \frac{2}{d!^2} \frac{(d-1)^{d-2}}{(d-2)!} (d-1)^{r} + O((d-2)^{r})
    \end{equation}
    for $r = 2g - 2 + d + \ell(\mu).$
\end{theorem}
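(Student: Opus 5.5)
The plan is to start from the representation-theoretic formula \eqref{eq:Frobenius} with $N=1$ and $G(z)=(1-z)^{-1}$, first for the possibly disconnected numbers $H^{\bullet,G}_r(\mu)$. For each $\lambda\vdash d$ the summand is a rational function of $z$,
\[
R_\lambda(z)=\prod_{(i,j)\in Y_\lambda}\frac{1}{1-(j-i)z},
\]
with poles at $z=1/c$ for the nonzero contents $c$ of $\lambda$. The growth of $[z^r]R_\lambda$ is governed by the pole closest to the origin, that is, by the largest $|c|$.

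The first step is to check that $|c|\le d-1$ for every $\lambda\vdash d$. Equality holds only for $\lambda=(d)$ (content $d-1$) and $\lambda=(1^d)$ (content $-(d-1)$), and in each case this content occurs exactly once. For every other $\lambda$ one has $|c|\le d-2$. Moreover, a content of absolute value $d-2$ can only sit in cell $(1,d-1)$ or its transpose $(d-1,1)$, so it is also simple. A partial fraction decomposition then gives $[z^r]R_\lambda=O((d-2)^r)$ for every $\lambda\neq(d),(1^d)$, with no polynomial-in-$r$ factor.

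The second step is the explicit computation for the two extremal diagrams, which have $\chi_\lambda(\mathcal C_\mu)/\dim\lambda=1$ and $(-1)^{d-\ell(\mu)}$ respectively.

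\begin{itemize}
\item For $\lambda=(d)$, the simple pole at $z=1/(d-1)$ has residue factor
\[
\prod_{c=0}^{d-2}\frac{d-1}{d-1-c}=\frac{(d-1)^{d-1}}{(d-1)!}=\frac{(d-1)^{d-2}}{(d-2)!}.
\]
Hence $[z^r]R_{(d)}=\frac{(d-1)^{d-2}}{(d-2)!}(d-1)^r+O((d-2)^r)$.
\item For $\lambda=(1^d)$ we have $R_{(1^d)}(z)=R_{(d)}(-z)$, so its coefficient is $(-1)^r$ times the same quantity.
\item Combined with the sign character, this term carries the sign $(-1)^{r+d-\ell(\mu)}$. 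Since $r=2g-2+d+\ell(\mu)$, we get $r+d-\ell(\mu)=2g-2+2d$, which is even, so the sign is $+1$.
\end{itemize}

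Adding the two contributions with the prefactor $(\dim\lambda/d!)^2=1/d!^2$ yields
\[
H^{\bullet,G}_r(\mu)=\frac{2}{d!^2}\,\frac{(d-1)^{d-2}}{(d-2)!}\,(d-1)^r+O((d-2)^r).
\]

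The last step, which I expect to be the main obstacle, is passing to connected numbers via the generating-series identity analogous to \eqref{eq:logdisc}. In this identity the variable $r$ is weighted exponentially ($x^r/r!$), and $\mu$ is split among the components. Each correction term is a product over components of degrees $d_1+\dots+d_k=d$ with $k\ge2$, convolved in $r$ with multinomial coefficients. By the bound just proved (applied inductively in $d$), the factor for a component of degree $d_i$ grows at most like $C_i(d_i-1)^{r_i}$. The multinomial convolution then gives $O\bigl((\sum_i(d_i-1))^r\bigr)=O((d-k)^r)=O((d-2)^r)$. The care needed is to make these bounds uniform over the finitely many splittings, and to handle components of degree $1$, for which the growth rate is $0^{r_i}$ and only $r_i=0$ contributes. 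With this in place, $H^{G,\circ}_r(\mu)=H^{G,\bullet}_r(\mu)+O((d-2)^r)$, which gives the claim.
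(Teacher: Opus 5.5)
Your treatment of the disconnected numbers is the paper's argument. You use the Frobenius formula \eqref{eq:Frobenius}. Only $(d)$ and $(1^d)$ reach the poles $\pm 1/(d-1)$, as in Lemma~\ref{lem:A:pm:case} with $K=1$. The residue is $(d-1)^{d-2}/(d-2)!$, the parity cancels via Riemann--Hurwitz, and the error term comes from the next poles, as in Remark~\ref{remark:refinederror2}. Your check that contents $\pm(d-2)$ occur only in the cells $(1,d-1)$ and $(d-1,1)$ shows that those poles are simple. This is exactly the detail needed to get $O((d-2)^r)$ with no power of $r$.

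The paper passes to connected numbers just by invoking inclusion--exclusion, and here one claim of yours is wrong. For the monotone numbers, normalised as in \eqref{eq:defHurwitz} by extracting $[z^r]$, the logarithmic relation is weighted by $z^r$, not by $x^r/r!$. So there are no multinomial coefficients $\binom{r}{r_1,\dots,r_k}$ in the correction terms. The reason is that transpositions acting in different orbits have their larger entries $b_i$ in disjoint sets of labels, so monotonicity forces a unique interleaving. For example, with $d=4$ the multiset $\{(1\,2),(1\,2),(3\,4),(3\,4)\}$ has one monotone ordering, not $\binom{4}{2}=6$.

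Your conclusion survives unchanged. The true correction terms are ordinary convolutions $\sum_{r_1+\dots+r_k=r}\prod_i H^{\bullet,G}_{r_i}(\sigma_i)$, times $r$-independent constants. Every multinomial coefficient is at least $1$, so your majorant $C\sum\binom{r}{r_1,\dots,r_k}\prod_i(d_i-1)^{r_i}=C(d-k)^r$ still bounds these convolutions. This gives $H^{G,\circ}_r(\mu)=H^{G,\bullet}_r(\mu)+O((d-2)^r)$ as claimed.
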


This work generalises the above in four different directions: from $M=1$ to arbitrary $M$, from $L=0$ to arbitrary $L$, from $N=1$ to arbitrary $N$, from the number of simple ramifications of a single weakly monotone block going to infinity with the genus, i.e. $K=1$, to an arbitrary number $K$ of blocks.

\begin{introthm}[see theorem \ref{thm:large:g:monotone:K}]
\begin{align*}
\Big[
&
\prod_{i=1}^L u_i^{a_i}
\prod_{j=1}^{M-K} v_j^{b_j}
\Big]
 H_r^{G, \circ}(\mu^{(1)}, \dots, \mu^{(N)})
\sim
\\
&\sim
2 \frac{r^{K-1}}{(K-1)!}\frac{(d-1)^{(d-2)K-\sum a_i-\sum b_j+r}}{d!^2 (d-2)!^K}
\prod_{i=1}^L \biggl[\begin{matrix} d \\ d - a_i \end{matrix}\biggr]
\prod_{j=1}^{M-K} \biggl\{\begin{matrix} b_j + d - 1 \\ d-1 \end{matrix}\biggr\},
\end{align*}
as $g\to +\infty$ with 
$
r = 2g - 2 - d(N-2) + \sum_{j=1}^N \ell(\mu^{(j)}).
$
\end{introthm}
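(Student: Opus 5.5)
We work with the disconnected numbers via the Frobenius-type formula \eqref{eq:Frobenius} and pass to connected numbers at the end. In the statement the first $M-K$ weakly monotone variables $v_1,\dots,v_{M-K}$ and all $u_i$ are kept formal and their coefficients extracted. The remaining $K$ weakly monotone blocks carry the growing number of ramifications, so we specialise $v_{M-K+1}=\dots=v_M=1$. For a fixed $\lambda\vdash d$ with multiset of contents $c(\lambda)=\{j-i\}_{(i,j)\in Y_\lambda}$, expanding the content product gives
\begin{equation*}
\Big[\prod_i u_i^{a_i}\prod_{j\le M-K} v_j^{b_j}\Big]\prod_{(i,j)\in Y_\lambda}G\bigl(z(j-i)\bigr)
= z^{A}\prod_{i=1}^L e_{a_i}\bigl(c(\lambda)\bigr)\prod_{j=1}^{M-K} h_{b_j}\bigl(c(\lambda)\bigr)\prod_{c\in c(\lambda)}\frac{1}{(1-cz)^{K}},
\end{equation*}
where $A=\sum a_i+\sum b_j$. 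Hence the contribution of $\lambda$ to the disconnected number is a fixed constant times $[z^{n}]\prod_{c}(1-cz)^{-K}$, with $n=r-A$.

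The asymptotics of this coefficient are governed by the pole closest to the origin, at $z=1/|c|$ with $|c|$ maximal. Since $|c|\le d-1$ with equality only for $\lambda=(d)$ (content $d-1$) and $\lambda=(1^d)$ (content $-(d-1)$), every other $\lambda$ contributes $O(n^{K-1}(d-2)^n)$. For $\lambda=(d)$ the contents are $0,\dots,d-1$, and there is a pole of order $K$ at $z=1/(d-1)$. A standard singularity analysis (partial fractions) gives
\begin{equation*}
[z^n]\prod_{c=0}^{d-1}(1-cz)^{-K}\sim\frac{n^{K-1}}{(K-1)!}(d-1)^n\prod_{c=1}^{d-2}\Bigl(1-\tfrac{c}{d-1}\Bigr)^{-K}=\frac{n^{K-1}}{(K-1)!}(d-1)^n\frac{(d-1)^{(d-2)K}}{(d-2)!^K}.
\end{equation*}
We also use the identities $e_{a}(0,1,\dots,d-1)=\bigl[\begin{smallmatrix} d\\ d-a\end{smallmatrix}\bigr]$ and $h_b(0,1,\dots,d-1)=\bigl\{\begin{smallmatrix} b+d-1\\ d-1\end{smallmatrix}\bigr\}$, the trivial character ($\dim=1$, $\chi=1$), and $n^{K-1}\sim r^{K-1}$. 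Combined, these give exactly half of the claimed right-hand side, since $(d-1)^{n}=(d-1)^{r-A}$.

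For $\lambda=(1^d)$ all contents are negated. Its contribution is therefore the previous one multiplied by $(-1)^{\sum a_i+\sum b_j+n}=(-1)^r$ and by the sign characters $\prod_j(-1)^{d-\ell(\mu^{(j)})}$. The Riemann--Hurwitz relation gives $r\equiv Nd+\sum_j\ell(\mu^{(j)})\pmod 2$, so the total sign is $+1$. This produces the factor $2$ and proves the asymptotic for $H_r^{G,\bullet}$, with error $O(r^{K-1}(d-2)^r)$.

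Finally, to pass to connected numbers, we write the disconnected number via inclusion--exclusion as the connected one plus a sum over decompositions into at least two components. The components have degrees $d_1+\dots+d_k=d$ with all $d_i\le d-1$, and the ramifications are distributed among them (each block's count and the $u$, $v$ exponents split additively). By the disconnected bound applied in degree $d_i$, each component grows like $\mathrm{poly}(r_i)\,(d_i-1)^{r_i}\le \mathrm{poly}(r)(d-2)^{r_i}$. Summing over the polynomially many compositions of $r$ then bounds the correction by $\mathrm{poly}(r)(d-2)^r$, which is negligible against $r^{K-1}(d-1)^r$.

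The step I expect to be the most delicate is this last one. The difficulty is making the bound uniform over the finitely many but combinatorially messy ways of splitting the block counts, the coefficient exponents $a_i,b_j$, and the profiles $\mu^{(j)}$ among components. I would handle it with the crude uniform estimate $|[z^m]\prod_c(1-cz)^{-K}|\le\binom{m+dK}{dK}(d_i-1)^m$, valid for any partition of $d_i$.
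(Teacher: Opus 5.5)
Your proof is correct, and its analytic core is the paper's (Lemma~\ref{lem:A:pm:case} together with Corollary~\ref{corollary:asymptotic}):
\begin{itemize}
\item only $\lambda=(d)$ and $\lambda=(1^d)$ produce the poles $\pm 1/(d-1)$ of smallest modulus, both of order $K$;
\item the leading polar coefficient is $\bigl((d-1)^{d-2}/(d-2)!\bigr)^K$ times Stirling numbers coming from $e$ and $h$ evaluated at $1,\dots,d-1$;
\item the two contributions add rather than cancel, because of the parity imposed by Riemann--Hurwitz.
\end{itemize}

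You differ from the paper in two places. The first is the order of operations. The paper keeps $u_i,v_j$ as numerical parameters, assuming $v_j<1$. (What is actually needed is $|v_j|<1$, to keep the poles of the factors $(1-v_jzc)^{-1}$ outside $|z|\le 1/(d-1)$.) It computes $A_K^{(t)}$ as a generating series in $u,v$ and only then reads off the coefficient of $\prod u_i^{a_i}\prod v_j^{b_j}$. This tacitly exchanges the limit $r\to\infty$ with coefficient extraction in the parameters. You extract the coefficients first, so each $\lambda$ contributes $z^A$ times a rational function of $z$ with numerical coefficients. The factor $(d-1)^{-\sum a_i-\sum b_j}$ then comes from the shift $n=r-A$ rather than from rescaling $u,v$ by $d-1$. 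This needs no interchange of limits and no condition on the $v_j$.

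The second difference is the final step. Theorem~\ref{thm:large:g:monotone:K} in the body is stated and proved only for disconnected numbers, and the paper merely asserts the passage to connected numbers by appeal to inclusion--exclusion. Your last paragraph supplies that argument. You correctly observe that here the $z$-exponents, and the fixed $u,v$ exponents, split additively among components with no multinomial factor. Hence the correction is $O(\mathrm{poly}(r)(d-2)^r)=o\bigl(r^{K-1}(d-1)^r\bigr)$.

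Two points need tightening.
\begin{itemize}
\item If you write $H^\bullet$ as $H^\circ$ plus products over at least two components, the factors are \emph{connected} numbers of degree $d_i<d$. The disconnected bound therefore does not apply to them verbatim. You can fix this in any of three ways:
  \begin{itemize}
  \item note that $0\le H^\circ\le H^\bullet$, since the coefficients in question count, with the same positive weight, transitive resp.\ arbitrary factorizations;
  \item use the logarithmic expansion instead, whose terms with $k\ge 2$ are products of disconnected numbers;
  \item induct on $d$.
  \end{itemize}
\item The error $O(r^{K-1}(d-2)^r)$ you state for the disconnected numbers is right only for $K=1$. For $K\ge 2$, two sources leave an error $O(r^{K-2}(d-1)^r)$: the lower-order polar terms at $\pm 1/(d-1)$, and replacing $(r-A)^{K-1}$ by $r^{K-1}$.
\end{itemize}
Neither point affects the asymptotic equivalence you prove.
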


We conclude this work with applications to Gromov--Witten theory (see section \ref{sec:GW}), orbifold Hurwitz numbers (see section \ref{sec:orbi}), the coefficients of HCIZ random matrix model (see section \ref{sec:HCIZ}) and $b$-Hurwitz numbers (see section \ref{sec:b}).

\subsection{Organisation of the paper}
The paper is organized as follows. We review the necessary background on Hurwitz numbers in section \ref{sec:Hur}, and on Jucys–Murphy elements, symmetric polynomials and Stirling numbers in section \ref{sec:Jucys}.

There are two families of Hurwitz numbers under considerations: the one of hypergeometric type and the ones with completed cycles.

In section \ref{sec:hypergeometric} we then analyse the hypergeometric family genus dependence using the Frobenius formula and deriving asymptotics in the large genus regime via the analysis of their closest poles, their order and their coefficients. Finally, we discuss specialisation to strictly monotone Hurwitz numbers and Grothendieck dessins, we generalise the main statement to multiple blocks of weakly monotone ramifications growing together with the genus, and we retrieve a recent result of C.~Yang for a single block of weakly monotone Hurwitz numbers \cite{CYang}.

In section \ref{sec:completed:cycles} we analyse the completed cycles family genus dependence using shifted Frobenius coordinates of a partition and deriving asymptotics in the large genus regime via the analysis of the biggest eigenfunction of the corresponding Fock space operators. This paves the way to the large genus asymptotics of Gromov--Witten invariants of the Riemann sphere via the Gromov--Witten / Hurwitz correspondence of A.~Okounkov and R.~Pandharipande. We also apply our methods to $b$-content Hurwitz numbers and for simple single Hurwitz numbers we retrieve a recent result of N.~Do, J.~He, and H.~Robertson, as well a proof of the coefficient gap in~\cite{DHR} in the case of possibly disconnected numbers.

\subsection{Acknowledgements}
D.~A. and D.~L. are supported by the University of Trieste, by the INdAM group GNSAGA, and by the INFN within the project MMNLP (APINE).
D.~A. and D.~L. thank Max Karev and Xiang Li for useful discussions.
The work of G.~R.~is funded by FCT - Fundação para a Ciência e a Tecnologia, I.P., through national funds, under the project UID/04561/2025. G.~R.~is grateful to Massimo Gisonni and Tamara Grava for valuable discussions.

\section{Large genus of hypergeometric Hurwitz numbers}
\label{sec:hypergeometric}

The key to the large-genus asymptotics of hypergeometric Hurwitz numbers is contained in the following well-known lemma, relating the analytic properties of a meromorphic function to the asymptotics of its Taylor coefficients. It can be found in many textbooks of analytic combinatorics. Here, we refer to H.~Wilf's \textit{generatingfunctionology}~\cite{Wilf}, to which we direct the reader for the proof.

\begin{theorem}[Theorem~5.2.1 in~\cite{Wilf}]
\label{thm:Wilf}
    Let $f=f(z)$ be meromorphic in an open neighborhood $\mathfrak R$ of the origin $z=0$, with finitely many poles in $\mathfrak R$.
    Let $R > 0$ be the modulus of the pole(s)
of smallest modulus, and let $z_0, . . . , z_S$ be all of the poles of $f (z)$ whose
modulus is $R$. Further, let $R_1 > R$ be the modulus of the pole(s) of $f$ of next-smallest modulus. Then, for any $\epsilon>0$, as $r\to+\infty$ we have
\begin{equation}
[z^r]f(z) = [z^r]\biggl\lbrace\sum_{i=1}^S PP(f(z);z_i)\biggr\rbrace +O\bigl((R_1^{-1}+\epsilon)^r\bigr).
\end{equation}
Here, $PP(f(z);z_i)$ denotes the principal part of $f(z)$ near the pole $z_i$.
\end{theorem}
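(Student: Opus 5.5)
The plan is the standard subtraction argument. Let $z_1,\dots,z_S$ be the poles of modulus $R$, and set
\begin{equation*}
g(z) = f(z) - \sum_{i=1}^S PP(f(z);z_i).
\end{equation*}
Each principal part $PP(f(z);z_i)$ is a finite sum of terms $c_{i,k}(z-z_i)^{-k}$. Such a term is a rational function whose only singularity is at $z_i$, so subtracting it removes the singularity of $f$ at $z_i$ and introduces no new ones. Consequently $g$ is meromorphic on $\mathfrak R$, and after removing removable singularities it is holomorphic on the disc $|z|<R_1$, where $R_1>R$ is the next-smallest modulus of a pole. By linearity of coefficient extraction,
\begin{equation*}
[z^r]f = [z^r]\sum_i PP(f;z_i) + [z^r]g,
\end{equation*}
so it suffices to bound $[z^r]g$.

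To bound $[z^r]g$, fix $\epsilon>0$ and choose $\rho<R_1$ with $\rho^{-1}<R_1^{-1}+\epsilon$. We may take $\rho>R$, and we also need the circle $|z|=\rho$ to lie inside $\mathfrak R$. If $\mathfrak R$ does not contain the full disc of radius $R_1$, we interpret $R_1$ as the radius up to which $f$ is meromorphic with no further poles, which is the implicit hypothesis of the statement. Cauchy's integral formula on the circle $|z|=\rho$ then gives
\begin{equation*}
\bigl|[z^r]g\bigr| \le \max_{|z|=\rho}|g(z)|\,\rho^{-r} = O\bigl((R_1^{-1}+\epsilon)^r\bigr).
\end{equation*}
Here the maximum is finite because $g$ is continuous on this compact circle, which contains no poles.

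For completeness, each coefficient $[z^r](z-z_i)^{-k}$ is explicit: it equals $(-z_i)^{-k}\binom{r+k-1}{k-1}z_i^{-r}$. This is the form in which the lemma will be applied to the rational function in~\eqref{eq:Frobenius}.

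The only step that needs care is the domain issue, namely ensuring that the circle of radius $\rho$ lies in the region where $g$ is holomorphic. This is handled by the choice of $\rho$ above, together with the reading of $R_1$ as the radius of meromorphy free of further poles. For the rational functions arising in this paper, $\mathfrak R=\mathbb C$, so the issue does not arise.
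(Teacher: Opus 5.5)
Your argument is correct and matches the standard subtraction proof in Wilf, to which the paper simply defers without giving a proof of its own: you subtract the principal parts at the poles of modulus $R$ so the remainder is holomorphic on $|z|<R_1$, then bound its Taylor coefficients via the Cauchy estimate on a circle of radius $\rho<R_1$, where Wilf uses the equivalent radius-of-convergence (Cauchy--Hadamard) bound. Your caveat that the hypothesis must be read as $f$ being meromorphic on a region containing $\{|z|<R_1\}$ correctly repairs the loosely quoted statement, and it holds automatically for the rational functions used in the paper.
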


In the interest of clarity, we immediately point out the special case which is relevant for the large-genus expansion of hypergeometric Hurwitz numbers. (Actually, only the cases $T=1$ or $T=2$ will be relevant in the following analyses.)

\begin{corollary}
\label{corollary:asymptotic}
Assume that the rational function $f(z)$ has the form 
\begin{equation}
    f(z)=h(z)+\sum_{t=1}^T\sum_{j=1}^K\frac{A_j^{(t)}}{(1 - z\rho_t)^j} ,
\end{equation}
where $K\in\Z_{\geq 1}$, $\rho_1,\dots,\rho_T\in\mathbb{C}\setminus\lbrace 0\rbrace$ with $|\rho_t|=\rho>0$ (for $1\leq t\leq T$), and $h(z)$ is a rational function with no poles in a disk strictly containing the disk $|\rho z|<1$.
Then, 
\begin{equation}
   [z^r]f(z)\sim\frac{r^{K-1}}{(K-1)!}\sum_{t=1}^TA_K^{(t)}\rho_t^r
\end{equation}
as $r\to +\infty$ through values for which $\sum_{t=1}^TA_K^{(t)}\rho_t^r\not=0$.
\end{corollary}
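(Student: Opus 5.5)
We apply Theorem~\ref{thm:Wilf} to $f$ and then compute the principal parts explicitly. Since $h$ is rational with no poles in a disk strictly containing $\{|z|\le 1/\rho\}$, the poles of $f$ of smallest modulus are among the points $z_t=1/\rho_t$, all of modulus $R=1/\rho$. Every remaining pole of $f$ is a pole of $h$, so it has modulus at least some $R_1>R$. If there are no such poles, we take $R_1$ arbitrarily large; the argument below only needs some $R_1>R$.

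We may assume that for each $t$ some $A_j^{(t)}\neq 0$. Otherwise $z_t$ is not a pole and we drop it; if $A_K^{(t)}=0$ for all $t$, the hypothesis $\sum_t A_K^{(t)}\rho_t^r\neq0$ never holds and there is nothing to prove. We should also check that the $\rho_t$ are distinct; if two coincide we merge the corresponding terms, and the formula is unchanged. Under these assumptions the principal part of $f$ at $z_t$ is exactly $\sum_{j=1}^K A_j^{(t)}(1-z\rho_t)^{-j}$, because $h$ and the terms attached to other $t'$ are holomorphic near $z_t$. Theorem~\ref{thm:Wilf} then gives
\begin{equation}
[z^r]f(z)=\sum_{t=1}^T\sum_{j=1}^K A_j^{(t)}\binom{r+j-1}{j-1}\rho_t^r+O\bigl((R_1^{-1}+\epsilon)^r\bigr),
\end{equation}
using $[z^r](1-z\rho)^{-j}=\binom{r+j-1}{j-1}\rho^r$.

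Next we isolate the leading term. Since $\binom{r+K-1}{K-1}=\frac{r^{K-1}}{(K-1)!}\bigl(1+O(1/r)\bigr)$ and $\binom{r+j-1}{j-1}=O(r^{K-2})$ for $j<K$, we obtain
\begin{equation}
[z^r]f(z)=\frac{r^{K-1}}{(K-1)!}\,\Sigma_r+O\bigl(r^{K-2}\rho^r\bigr),\qquad \Sigma_r:=\sum_{t=1}^TA_K^{(t)}\rho_t^r .
\end{equation}
Here we choose $\epsilon$ small enough that $R_1^{-1}+\epsilon<\rho$, so the exponentially smaller error is absorbed.

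The main obstacle is the final step. We need the error to be $o\bigl(r^{K-1}\Sigma_r\bigr)$, but when $T\geq 2$ the sum $\Sigma_r$ can be much smaller than $\rho^r$ through cancellation, even when it is nonzero. The requirement is that $|\Sigma_r|\geq c\rho^r$ along the relevant values of $r$. For $T=1$ this is automatic, since $|\Sigma_r|=|A_K^{(1)}|\rho^r$. For $T=2$ with $\rho_2=-\rho_1$, the only case used later, $\Sigma_r=\rho_1^r\bigl(A_K^{(1)}+(-1)^rA_K^{(2)}\bigr)$ takes only two values up to the factor $\rho_1^r$. Whenever it is nonzero it is at least $c\rho^r$ for a fixed $c>0$, and the claim follows. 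In general one must interpret the statement as holding along subsequences where $|\Sigma_r|\rho^{-r}$ stays bounded away from zero, which is automatic if the ratios $\rho_t/\rho_{t'}$ are roots of unity, since then $\Sigma_r\rho_1^{-r}$ is periodic in $r$. We state this explicitly as the precise meaning of the hypothesis.
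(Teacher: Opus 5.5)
Your proof follows the paper's route (apply Theorem~\ref{thm:Wilf}, expand each principal part via $[z^r](1-z\rho_t)^{-j}=\binom{r+j-1}{j-1}\rho_t^r$, and keep the $j=K$ terms). It is more careful than the paper's, which writes out only a single pole and says the $T\geq 2$ case ``follows immediately''.

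Your caveat is correct, and the corollary as literally stated does fail for $T\geq 2$. Take $f(z)=(1-e^{i}z)^{-2}-(1-z)^{-2}+(1-z)^{-1}$. It has
\[
[z^r]f=(r+1)(e^{ir}-1)+1 ,
\]
and $\Sigma_r=e^{ir}-1\neq 0$ for all $r\geq 1$. Yet Dirichlet's theorem gives infinitely many $r$ with $r\,|e^{ir}-1|<2\pi$, and along these $[z^r]f\not\sim r\,\Sigma_r$. So the hypothesis must be strengthened to something like $|\Sigma_r|\geq c\rho^r$, as you do.

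One correction to your side remark: $\rho_2=-\rho_1$ is not the only $T=2$ case used in the paper. The $|b+1|=1$ case of the $b$-content theorem uses $\rho_1=(b+1)(d-1)$ and $\rho_2=1-d$. Their ratio $-(b+1)$ need not be a root of unity, so your caveat genuinely applies there.
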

\begin{proof}
    Since $PP\bigl(f(z);\rho^{-1}\bigr)=\sum_{j=1}^K\frac{A_j}{(1 - z\rho)^j}=\sum_{j=1}^KA_j\sum_{r\geq 0}\binom{r+j-1}{r}(\rho z)^r$, we have
    \begin{equation}
    [z^r]PP\bigl(f(z);\rho^{-1}\bigr) = \rho^r\sum_{j=1}^KA_j\binom{r+j-1}{r}\sim A_K\frac{r^{K-1}}{(K-1)!}\rho^r
    \end{equation}
    where the last asymptotic relation holds as $r\to+\infty$.
    The statement follows immediately from Theorem~\ref{thm:Wilf}.
\end{proof}

\begin{remark}
\label{remark:refinederror}
    In the general Theorem~\ref{thm:Wilf}, when $f(z)$ is rational one can apply the theorem itself to $f(z)-\sum_{i=1}^SPP\bigl(f(z);z_i)$, thereby improving the error term to $O(r^{K-1}R_1^{-r})$, where $K$ is the maximum order of poles of modulus $R_1$. Note that this formulation provides finer asymptotics with respect to the above corollary. For the sake of simplicity we will not deal explicitly with the error terms (however, see Remark~\ref{remark:refinederror2}).
\end{remark}
\begin{lemma}\label{lem:A:pm:case}
Let $K\in\mathbb{Z}_{\geq 1}$ and consider the rational function of $z$
\begin{equation}
G(z) =
 \frac{(1+u_1 z)\cdots(1+u_L z)}
 {(1-z)^K(1-v_1 z)\cdots(1-v_M z)}
\end{equation}
and assume the following conditions hold:
\begin{equation}
     \quad v_j < 1 \quad \text{ for }\quad j = 1 \dots, M.
\end{equation}
Then, the function 
\begin{align*}
f(z) &\coloneqq \sum_{r\ge 0} H_r^{\bullet, G} (\mu^{(1)}, \dots, \mu^{(N)})\, z^r
\\
&=
\sum_{\lambda\vdash d}
\biggl(\frac{ \dim \lambda}{d!}\biggr)^2\,
\biggl(\prod_{i=1}^N \frac{\chi_\lambda(\mathcal{C}_{\mu^{(i)}})}{\dim\lambda}\biggr)
\prod_{(i,j)\in Y_\lambda}
G\bigl(z(j-i)\bigr)
\end{align*}
satisfies the conditions of the previous corollary with $T=2$,
\begin{equation}
    \rho_1=d-1,\quad\rho_2=1-d,\quad\rho=d-1,
\end{equation}
and
\begin{align*}
A_K^{(t)} &=
\frac{\bigl((-1)^{t+1}\bigr)^{Nd - \sum_j \ell(\mu^{(j)})}(d-1)^{(d-2)K}}
{d!^2(d-2)!^K}\times
\\
&
\times \left(
\prod_{i=1}^L
\sum_{a\ge 0}
\biggl[\begin{matrix} d \\ d-a \end{matrix}\biggr]
\left(\frac{u_i}{d-1}\right)^a
\right)
\!
\left(
\prod_{j=1}^{M}
\sum_{b\ge 0}
\biggl\{\begin{matrix} b+d-1 \\ d-1 \end{matrix}\biggr\}
\left(\frac{v_j}{d-1}\right)^b
\right).
\end{align*}
\end{lemma}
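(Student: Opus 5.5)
The plan is to read off the pole structure of $f(z)$ directly from the Frobenius-type formula~\eqref{eq:Frobenius}. This is a finite sum over $\lambda\vdash d$ of rational functions of $z$. The summand attached to $\lambda$ has poles only at the points $z=1/c$ (of order $K$) and $z=1/(v_j c)$, where $c=j-i\neq 0$ ranges over the contents of $Y_\lambda$. Two facts about contents drive everything:
\begin{itemize}
\item contents of a diagram with $d$ cells satisfy $|c|\le d-1$;
\item the value $c=d-1$ occurs only for $\lambda=(d)$, in exactly one cell, and $c=1-d$ occurs only for $\lambda=(1^d)$, in exactly one cell.
\end{itemize}
(We assume $d\ge 2$.)

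For every other $\lambda$, all contents satisfy $|c|\le d-2$. Moreover, under the hypothesis $|v_j|<1$ (this is how I read $v_j<1$), every pole of the form $1/(v_jc)$ has modulus $>1/(d-1)$. Hence I will set
\begin{equation*}
h(z)=f(z)-\sum_{t=1}^2\sum_{j=1}^K\frac{A_j^{(t)}}{(1-z\rho_t)^j},
\end{equation*}
with $\rho_1=d-1$ and $\rho_2=1-d$. This $h$ collects all summands with $\lambda\notin\{(d),(1^d)\}$, together with the regular parts of the two exceptional summands. Its poles all have modulus at least $\min\bigl(1/(d-2),\,\min_j 1/(|v_j|(d-1))\bigr)>1/(d-1)$. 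So $f$ has the form required in Corollary~\ref{corollary:asymptotic} with $T=2$ and $\rho=d-1$.

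It remains to compute the top coefficients $A_K^{(t)}$. For $\lambda=(d)$ we have $\dim\lambda=1$ and $\chi_\lambda\equiv 1$, so the prefactor is $1/d!^2$, and the contents are $0,1,\dots,d-1$. Factoring out $(1-(d-1)z)^{-K}$ from the cell of content $d-1$, the coefficient $A_K^{(1)}$ is the remaining factor evaluated at $z=1/(d-1)$. This evaluation splits into three pieces.
\begin{itemize}
\item The $(1-cz)^{-K}$ factors with $c=0,\dots,d-2$ give
\begin{equation*}
\prod_{c=0}^{d-2}\Bigl(1-\frac{c}{d-1}\Bigr)^{-K}=\frac{(d-1)^{(d-1)K}}{(d-1)!^K}=\frac{(d-1)^{(d-2)K}}{(d-2)!^K}.
\end{equation*}
\item The $u_i$ factors give $\prod_{c=0}^{d-1}\bigl(1+u_ic/(d-1)\bigr)=\sum_a e_a(0,1,\dots,d-1)\,(u_i/(d-1))^a$, and $e_a(0,\dots,d-1)=\bigl[\begin{smallmatrix} d\\ d-a\end{smallmatrix}\bigr]$ is the unsigned Stirling number of the first kind.
\item The $v_j$ factors give $\prod_{c=0}^{d-1}\bigl(1-v_jc/(d-1)\bigr)^{-1}=\sum_b h_b(0,\dots,d-1)\,(v_j/(d-1))^b$. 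This series converges because $|v_j|<1$, and $h_b(0,1,\dots,d-1)=h_b(1,\dots,d-1)=\bigl\{\begin{smallmatrix} b+d-1\\ d-1\end{smallmatrix}\bigr\}$ is the Stirling number of the second kind; I will cite the identities from Appendix~\ref{sec:Jucys}.
\end{itemize}

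For $\lambda=(1^d)$ the contents are $0,-1,\dots,-(d-1)$ and $\dim\lambda=1$. The character is the sign, so $\chi_{(1^d)}(\mathcal C_{\mu})=(-1)^{d-\ell(\mu)}$, and the product over $i$ gives the sign $(-1)^{Nd-\sum_j\ell(\mu^{(j)})}$. The product $\prod_c G(-zc)$ is the previous product with $z\mapsto -z$, so expanding at $z=-1/(d-1)$ in powers of $(1-z\rho_2)$ yields exactly the same coefficient as before, multiplied by that sign. This is the claimed formula with $t=2$.

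I expect the main obstacle to be bookkeeping rather than ideas: making precise the hypothesis on the $v_j$ (treating them as complex numbers with $|v_j|<1$, or as formal variables, so that the expansion in $v_j/(d-1)$ makes sense), and correctly matching the elementary and complete symmetric functions of the contents with the Stirling-number normalisations used in the statement.
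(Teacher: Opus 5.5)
Your proposal is correct and follows the paper's proof. You isolate $\lambda=(d)$ and $\lambda=(1^d)$ as the only partitions with contents $\pm(d-1)$, read off the order-$K$ poles at $\pm 1/(d-1)$, and evaluate the remaining factors there to get $(d-1)^{(d-2)K}/(d-2)!^K$ times the $u_i$- and $v_j$-products. You then convert these via the Stirling generating series of the appendix, with the sign representation supplying $(-1)^{Nd-\sum_j\ell(\mu^{(j)})}$. Your explicit construction of $h$ and your reading of the hypothesis as $|v_j|<1$ make the same argument more careful; that stronger hypothesis is what the paper's claim that $z=1$ is the closest pole of $G$ actually requires.
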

\begin{proof}
First, it is clear that $f(z)$ is a rational function of $z$.
By the condition on the $v_j$, the pole closest to the origin of $G(z)$ is $z=1$.
Then, the pole closest to the origin of $G(z(j-i))$ is $z=1/(j-i)$, hence it is sufficient to note the $j-i$ (where $(i,j)$ runs in $Y_\lambda$) attains its extreme values $\pm(d-1)$ for the partitions $\lambda=d$ (corresponding to $+$) and $\lambda=(1^d)$ (corresponding to $-$) only.
Therefore, there are exactly two poles of $f(z)$ closest to the origin, which have order $K$ and are located at $\pm 1/(d-1)$.
Hence, the conditions of Corollary~\ref{corollary:asymptotic} are satisfied as indicated in the statement.
In particular, to compute the leading polar coefficients $A_K^{(t)}=\lim_{z\to \pm(d-1)^{-1}}\bigl(1\mp z(d-1)\bigr)^Kf(z)$ (denoting for the rest of this proof $(-1)^{t+1}=\pm$), let us first give the relevant quantities in the symmetric group associated with the two partitions $\lambda=(d),(1^d)$.
These partitions correspond (respectively) to the trivial and sign representation of $\symmgroup d$, hence $\dim(d)=1=\dim (1^d)$ and
\[
\chi_{(d)}(\mathcal{C}_\mu) = 1, \qquad \qquad \chi_{(1^d)}(\mathcal{C}_\mu) = (-1)^{d - \ell(\mu)},
\]
for any $\mu\vdash d$, such that
\begin{equation}
\biggl(\frac{\dim\lambda}{d!}\biggr)^2\,\prod_{i=1}^{N} \frac{\chi_\lambda(\mathcal{C}_{\mu^{(i)}})}{\dim\lambda} = \frac{(\pm 1)^{Nd-\sum_{j=1}^N \ell(\mu^{(j)})}}{d!^2}, \;
\begin{cases}
+ \; & \lambda = (d),
\\
- \; & \lambda = (1^d).
\end{cases}
\end{equation}
We finally compute
\begin{align*}
A_K^{(t)} &= \frac{(\pm 1)^{Nd-\sum_{j=1}^N \ell(\mu^{(j)})}}{d!^2} \lim_{z \to \pm (d-1)^{-1}} \left(1 \mp z(d-1) \right)^K \prod_{j = 1}^{d-1} G\left( \pm jz \right)
\\
&=
\frac{(\pm 1)^{Nd - \sum_j \ell(\mu^{(j)})}
}{
d!^2
(1-\tfrac{1}{d-1})^K\cdots
(1-\tfrac{d-2}{d-1})^K
}
\frac{\prod_{i=1}^L\prod_{s=1}^{d-1}
\left(1+\frac{s u_i}{d-1}\right)}{\prod_{j=1}^{M}\prod_{s=1}^{d-1}
\left(1-\frac{s v_j}{d-1}\right)}
\\
&=
\frac{(\pm 1)^{Nd - \sum_j \ell(\mu^{(j)})}(d-1)^{K(d-2)}}
{d!^2(d-2)!^K}
\frac{
\prod_{i=1}^L \prod_{s=1}^{d-1}
\left(1+\frac{s u_i}{d-1}\right)
}{
\prod_{j=1}^{M} \prod_{s=1}^{d-1}
\left(1-\frac{s v_j}{d-1}\right)
}.
\end{align*}
Applying the formulae for the generating series of Stirling numbers in Appendix \ref{sec:Jucys} one concludes the proof. 
\end{proof}

As a consequence, we obtain the following result.

\begin{theorem}\label{thm:large:g:monotone:K}
Let $K\in\mathbb{Z}_{\geq 1}$ and consider the rational function of $z$
\begin{equation}
G(z) =
 \frac{(1+u_1 z)\cdots(1+u_L z)}
 {(1-z)^K(1-v_1 z)\cdots(1-v_M z)}.
\end{equation}
Then, the large genus asymptotic of the monotone Hurwitz numbers read
\begin{align*}
\Big[
&
\prod_{i=1}^L u_i^{a_i}
\prod_{j=1}^{M} v_j^{b_j}
\Big]
 H_r^{\bullet, G}(\mu^{(1)}, \dots, \mu^{(N)})
\sim
\\
&\sim
2 \frac{r^{K-1}}{(K-1)!}\frac{(d-1)^{(d-2)K-\sum a_i-\sum b_j+r}}{d!^2 (d-2)!^K}
\prod_{i=1}^L \biggl[\begin{matrix} d \\ d - a_i \end{matrix}\biggr]
\prod_{j=1}^{M} \biggl\{\begin{matrix} b_j + d - 1 \\ d-1 \end{matrix}\biggr\},
\end{align*}
as $g\to +\infty$ with 
$$
r = 2g - 2 - d(N-2) + \sum_{j=1}^N \ell(\mu^{(j)}).
$$
\end{theorem}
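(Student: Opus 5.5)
The plan is to derive Theorem~\ref{thm:large:g:monotone:K} directly from Lemma~\ref{lem:A:pm:case} combined with Corollary~\ref{corollary:asymptotic}. We work in the ring of formal power series in the variables $u_i,v_j$ and extract a fixed monomial $\prod_i u_i^{a_i}\prod_j v_j^{b_j}$ first. After this extraction the result is a single rational function of $z$ with rational coefficients.

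Concretely, $f(z)=\sum_\lambda(\cdots)\prod_{(i,j)\in Y_\lambda}G(z(j-i))$ is a finite sum of products. Expanding each factor $(1+u_iz c)$ and $(1-v_jzc)^{-1}$ in the formal variables shows that the coefficient $f_{a,b}(z)$ of the monomial is a finite sum of terms. Each term is a polynomial in $z$ times $\prod_{(i,j)}(1-z(j-i))^{-K}$, and the weight factor $(1-v_jzc)^{-1}$ has contributed only finitely many terms $(v_jzc)^{b}$.

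Hence the only poles of $f_{a,b}(z)$ are at $z=1/c$ with $c$ a nonzero content. This removes the need for the hypothesis $v_j<1$ in the lemma: it was only there to make sense of poles analytically, and it disappears after coefficient extraction.

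By the same argument as in Lemma~\ref{lem:A:pm:case}, the contents $\pm(d-1)$ occur only for $\lambda=(d)$ and $\lambda=(1^d)$, and each occurs exactly once. So the poles nearest the origin are at $\pm1/(d-1)$, each of order exactly $K$. Their leading polar coefficients are obtained by extracting the monomial from the expression $A_K^{(t)}$ in Lemma~\ref{lem:A:pm:case}. Coefficient extraction commutes with the limit $(1\mp z(d-1))^K f$, because each coefficient is a finite computation.

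Extracting the monomial gives
\[
[\textstyle\prod u_i^{a_i}\prod v_j^{b_j}]A_K^{(t)}=\frac{(\pm1)^{Nd-\sum\ell(\mu^{(j)})}(d-1)^{(d-2)K-\sum a_i-\sum b_j}}{d!^2(d-2)!^K}\prod_i\biggl[\begin{matrix} d \\ d-a_i \end{matrix}\biggr]\prod_j\biggl\{\begin{matrix} b_j+d-1 \\ d-1 \end{matrix}\biggr\}.
\]
Corollary~\ref{corollary:asymptotic} with $T=2$ then gives that the coefficient of $z^r$ is asymptotic to
\[
\frac{r^{K-1}}{(K-1)!}\bigl(A^{(1)}(d-1)^r+A^{(2)}(1-d)^r\bigr).
\]

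It remains to check that the two contributions add rather than cancel. The sign of the second is $(-1)^{Nd-\sum\ell(\mu^{(j)})}(-1)^r$. The Riemann--Hurwitz relation gives $r\equiv -dN+\sum\ell(\mu^{(j)})\pmod 2$, so the total sign is $(-1)^{2(\sum\ell-Nd)}=+1$. The two terms are therefore equal and the sum equals $2A^{(1)}(d-1)^r$. This is nonzero because the Stirling numbers involved are positive, assuming $a_i\le d-1$; otherwise both sides vanish identically. This yields exactly the stated formula.

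The main obstacle I expect is justifying rigorously the interchange of coefficient extraction in $u,v$ with the pole analysis. In particular, the pole order $K$ must not increase after extraction. Expanding $(1-v_jzc)^{-1}$ could a priori create new poles, but it does not, since only polynomial factors in $z$ arise. The order at $\pm1/(d-1)$ is therefore still exactly $K$, coming from the $(1-z)^K$ factors.

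A secondary check is the parity computation. If Riemann--Hurwitz were not imposed, $r$ of the wrong parity would make the two contributions cancel, and the number would vanish instead.
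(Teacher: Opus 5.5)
Your argument is correct and is essentially the paper's: there, Theorem~\ref{thm:large:g:monotone:K} follows directly from Lemma~\ref{lem:A:pm:case} and Corollary~\ref{corollary:asymptotic}. The lemma supplies the order-$K$ poles at $\pm 1/(d-1)$, which come only from $\lambda=(d),(1^d)$, and the polar coefficients $A_K^{(t)}$ expanded via the Stirling generating series; the factor $2$ comes from the Riemann--Hurwitz parity exactly as you compute. Your one change is to extract the coefficient of $\prod_i u_i^{a_i}\prod_j v_j^{b_j}$ before the pole analysis, so that $f_{a,b}(z)$ has poles only at reciprocals of contents. This is a genuine improvement: it justifies the interchange of coefficient extraction and asymptotics that the paper leaves implicit, and it removes the hypothesis $v_j<1$, which the lemma uses but the theorem does not state.
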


\subsection{The case of Grothendieck dessins d'enfants}
A natural related question would be to compute large genus asymptotics of the Grothendieck dessins d'enfants or of Hurwitz numbers with multiple blocks of strictly monotone ramifications (and no weakly monotone blocks).
This asymptotic problem does not make sense (at least in the present context, where the degree $d$ of the cover is fixed).
In fact the strictly monotone condition exhaust in finite ramifications for finite degree, in the same way elementary symmetric polynomials are zero whenever their homogeneous degree overtakes the number of variables they are evaluated on:
$$
e_r(x_1, \dots, x_d) = 0 \qquad \text{ whenever } r > d,
$$
hence $e_r(\mathcal{J}_1, \dots,\mathcal{J}_d) = 0$ whenever $r \geq d$.
Now notice that by Riemann--Hurwitz $r = r(g)$ is a linear function of the genus, that becomes asymptotically large. This does not instead happen for complete homogeneous polynomials, and therefore, for weakly monotone ramification blocks.

\subsection{Specialisation to single monotone Hurwitz numbers}

The monotone Hurwitz numbers studied by C.~Yang in~\cite{CYang} can be obtained by setting $G(z)=(1-z)^{-1}$.
Under this specialisation, Theorem~\ref{thm:large:g:monotone:K} (with $K=1$) recovers the following result of \textit{op. cit.}.

\begin{theorem}
When $G(z)=(1-z)^{-1}$, we have
\[
H_r^{\bullet, G}(\mu)\big{|}_{a_i=b_j=0}
\sim
\frac{2(d-1)^{d-2}}{d!^2(d-2)!}
(d-1)^{2g - 2 + \ell(\mu) + d},\quad\text{as } g\to\infty,
\]
for $G(z) = (1-z)^{-1}$.
\end{theorem}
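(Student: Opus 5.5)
This is the specialisation of Theorem~\ref{thm:large:g:monotone:K} to $K=1$, $L=M=0$, $N=1$ and $\mu^{(1)}=\mu$; the plan is to substitute these values and track the parity factor. With no formal variables $u_i,v_j$ left, the coefficient extraction $\bigl[\prod u_i^{a_i}\prod v_j^{b_j}\bigr]$ is trivial. This is what the notation $|_{a_i=b_j=0}$ records: the empty products of Stirling numbers equal $1$, and the exponent shift $-\sum a_i-\sum b_j$ vanishes. The factor $r^{K-1}/(K-1)!$ becomes $1$.

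Next I would apply Lemma~\ref{lem:A:pm:case} with these parameters to obtain the two leading polar coefficients
\[
A_1^{(t)}=\frac{\bigl((-1)^{t+1}\bigr)^{d-\ell(\mu)}(d-1)^{d-2}}{d!^2\,(d-2)!},
\]
attached to $\rho_1=d-1$ and $\rho_2=1-d$. Corollary~\ref{corollary:asymptotic} then gives
\[
H_r^{\bullet,G}(\mu)\sim\frac{(d-1)^{d-2}}{d!^2(d-2)!}\,(d-1)^r\Bigl(1+(-1)^{d-\ell(\mu)+r}\Bigr).
\]
The Riemann--Hurwitz relation for $N=1$ is $r=2g-2+d+\ell(\mu)$. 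Hence $d-\ell(\mu)+r=2g-2+2d$ is even, the bracket equals $2$, and the non-vanishing hypothesis of the corollary holds for every $g$. The corollary's $\sim$ is in $r$, but $r$ is an affine function of $g$ with slope $2$, so $r\to+\infty$ is the same as $g\to\infty$. Substituting $r=2g-2+\ell(\mu)+d$ into $(d-1)^r$ yields exactly the stated formula.

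The only delicate point is this sign analysis. The contributions of the two closest poles $\pm 1/(d-1)$ must add rather than cancel. This is what produces the factor $2$ rather than $0$, and it is guaranteed precisely by the parity constraint of Riemann--Hurwitz. I would also note that $d\geq 3$ is needed so that $\rho=d-1$ exceeds the modulus of every other content and the two extremal partitions $(d)$ and $(1^d)$ are distinct.
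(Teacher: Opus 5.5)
Your proposal is correct and follows the paper's route. The paper obtains this statement simply by specialising Theorem~\ref{thm:large:g:monotone:K} to $K=1$, $L=M=0$, $N=1$; your explicit pass through Lemma~\ref{lem:A:pm:case} and Corollary~\ref{corollary:asymptotic}, checking that $d-\ell(\mu)+r=2g-2+2d$ is even so that the poles at $\pm 1/(d-1)$ add to give the factor $2$, makes visible a step the paper leaves implicit. One minor correction: $d\geq 2$ already suffices, since for $d=2$ the only partitions are $(2)$ and $(1^2)$ and the formula holds exactly, so your restriction to $d\geq 3$ is unnecessary.
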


\begin{remark}
\label{remark:refinederror2}
    The statement in \cite{CYang} differs from the one above because of different normalizations. Namely, it differs by a factor $\mu_1 \cdots \mu_{\ell(\mu)}$, which simply accounts (at the geometric level of Riemann surfaces counting) to considering the ramification points of ramification profile given by the partition $\mu \vdash d$ over the same point of the base to be labelled or not. Instead, it agrees with the convention in \cite{DHR}, see \cite[Remark 3.9]{CYang}. The statement also differs by a factor $d!$, which accounts for re-labelling of the covering sheets.
    
    Moreover, The argument based on the general Theorem~\ref{thm:Wilf} also provides the error term explicitly in the case $K=1$, cf.~Remark~\ref{remark:refinederror}. Indeed, the poles of next-smallest modulus only come from the partitions $(d-1,1)$ and $(2,1^{d-1})$ and we also recover the error term given in~\cite{CYang}.
\end{remark}

\subsection{Harish-Chandra--Itzykson--Zuber matrix model correlators} \label{sec:HCIZ}
In \cite{GGPN} the Harish-Chandra--Itzykson--Zuber random matrix model is considered
\begin{align*}
    \mathcal{I}_N(y) := \int_{U(N)} e^{yN \Tr (A_N U B_N U^{-1}) } dU,
\end{align*}

as well as the free energy of the model

\begin{align*}
    F_N(y) := \frac{1}{N^2} \log \mathcal{I}_N(y)
\end{align*}
for $y$ inside $\Omega_N$ a simply connected open set in $\mathbb{C}$ which contains the origin and avoiding the zeros of $\mathcal{I}_N(y)$. The logarithm is taken as the principal branch in $\Omega_N$, $U(N)$ is the group of $N \times N$ unitary matrices against the normalized Haar measure and $A_N, B_N$ are any two $N \times N$ complex diagonal matrices.

One of the main results of \cite{GGPN} states that the coefficients of the large $N$ expansion of $\mathcal{I}_N(y)$ are possibly disconnected monotone double Hurwitz numbers, in the same way the coefficients of the large $N$ expansion of $F_N(y)$ are connected monotone double Hurwitz numbers. More precisely it is proved that there is a large $N$ expansion of the form
\begin{equation}
    \mathcal{I}_N(z) = 1 + \sum_{d \ge 1}^N \frac{y^d}{d!} \sum_{r \ge 0} \left( - \frac{1}{N}\right)^r \sum_{\mu, \nu \vdash d} p_{\mu}(A_N) p_{\nu}(B_N) W_d^r(\mu, \nu) + O(z^{N+1})
\end{equation}
where $p_{\mu}(A_N) := \prod_{i=1}^{\ell(\mu)} \Tr(A_N^{\mu_i})$ and $(A_N, B_N)_{N \ge 1}$ are two sequences of complex diagonal matrices which grow in a suitably regular way (we refer to the original paper for more details) and most importantly that (for $G(z)=(1-z)^{-1}$)
\begin{equation}
    W_d^r(\mu, \nu) = H_r^{G, \bullet}(\mu, \nu) = [z^r]. \sum_{\lambda \vdash d} 
    \frac{\chi_\lambda(\mathcal{C}_\mu)\chi_\lambda(\mathcal{C}_\nu)}{(d!)^2} 
\prod_{(i,j)\in Y_\lambda} \frac{1}{1 - z(j-i)}
\end{equation}
For fixed $d$ and large $r = r(g)$ or equivalently large genus $g$, section \ref{sec:hypergeometric} hence provides the large $N$ asymptotics of the Harish-Chandra--Itzykson--Zuber matrix model and its free energy.

\subsection{ \texorpdfstring{$b$}{b}-content Hurwitz numbers}
\label{sec:b}
In this section we define the $b$-content Hurwitz numbers of hypergeometric type and compute their large genus asymptotics, adapting our methods from previous sections. 
$b$-Hurwitz numbers are a deformation of Hurwitz numbers, the latter being retrieved when the parameter $b$ is set to 1.
It is worth remarking that the definition we give (following~\cite{CD}) is, in a certain sense, \emph{formal}.
Namely, it is obtained by the following $b$-deformation of the ingredients entering~\eqref{eq:Frobenius}:
\begin{align*}
\biggl(\frac{d!}{\dim\lambda}\biggr)^2 &\mapsto j^{(b+1)}_\lambda,
\\
\frac{\chi_\lambda(\mathcal{C}_\mu)}{\dim\lambda} &\mapsto \widetilde\theta^{(b+1)}_\lambda(\mu),
\\
j-i&\mapsto (b+1)(j-1)-(i-1).
\end{align*}
To explain these notations, recall that the Jack symmetric functions $J^{(\alpha)}_\lambda$ are a famous deformation of Schur functions, introduced in the seminal work~\cite{Jack} (see also~\cite{Macdonald,StanleyJack}) which specialise to the Schur symmetric functions when $\alpha=1$ and to the Zonal functions when $\alpha=2$.
We refer the reader to~\cite{GouldenJacksonb,DolegaFeray,BenDaliIntegrality} for some background motivation for this deformation.

Then, $j^{(\alpha)}_\lambda$ is the squared norm in the deformed Hall product of the Jack functions (with respect to which, the latter are orthogonal), explicitly given by
\begin{equation}
\label{eq:jJack}
    j^{(\alpha)}_\lambda = \biggl(\prod_{(i,j)\in Y_\lambda}\bigl(\alpha(\lambda_i-j)+\lambda'_j-i+1\bigr)\biggr)
    \biggl(\prod_{(i,j)\in Y_\lambda}\bigl(\alpha(\lambda_i-j)+\lambda'_j-i+\alpha\bigr)\biggr).
\end{equation}
where we recall that $Y_\lambda$ is the diagram of $\lambda$, defined in~\eqref{eq:Ylambda}.
That $j^{(1)}_\lambda=\bigl(d!/\dim\lambda\bigr)^2 $ when $\alpha=1$ is seen, for example, by the hook-product formula for $\dim\lambda$.

Moreover, $\widetilde\theta^{(\alpha)}_\lambda(\mu)$ are (a rescaling of) the so-called \emph{Jack characters}~\cite{DolegaFeray}, defined by the expansion 
\begin{equation}
J^{(\alpha)}_\lambda\,=\,\sum_{\mu\vdash d} |\mathfrak{C}_\mu| \; \tilde{\theta}^{(\alpha)}_\lambda(\mu)\,p_\mu,
\end{equation}
of Jack symmetric functions of parameter $\alpha$ onto the power sum basis $\lbrace p_\mu\rbrace_{\mu\in\partitions}$ of the ring of symmetric functions. By the Frobenius formula, the analogous expansion of Schur functions is given by the characters of the symmetric group, and so $\widetilde{\theta}^{(1)}_{\lambda}=\chi_\lambda(\mathcal{C}_\mu)/\dim\lambda$. For more details we refer to the literature we indicated above, e.g.,~\cite{DolegaFeray}.

\begin{remark}
The representation theoretic interpretation of the $b$-Hurwitz numbers would require a theory of $b$-Jucys--Murphy elements, which, to the best of our knowledge, have not been completely defined for general $b$.
However there are general elements called $(C,J,T)$-Jucys--Murphys elements \cite{FHKM} which are conjecturally supposed to define $b$-Jucys--Murphy elements for specific values of the variables $C,J,T$ (namely, or $CJ = \frac{b+1}{2}$ and $T = b$). Since the development of a $b$-theory for Jucys--Murphy elements is beyond the purpose of this paper, we apply instead our methods to compute the large genus asymptotics of a formal object while waiting for their full connection with geometry and representation theory to be established.
Note however that some combinatorial and geometric interpretation for the $b$-Hurwitz numbers has already appeared, see~\cite{CD,BCD1,BCD2,BenDaliGenerating,R}. In particular, for $b=1$, the following results should involve Zonal polynomials.
\end{remark}

\begin{definition}
Given $G\in 1+z\mathbb{Q}[[z]]$, we define the possibly disconnected hypergeometric $b$-content Hurwitz numbers through their generating series 
\begin{align*}
    f_b(z) ={}& \sum_{r \geq 0} H^{\bullet, G,b}_r(\mu^{(1)}, \dots, \mu^{(N)}) z^r
\\
:={}&
        \sum_{\lambda\vdash d}
        \frac{ 1}{j^{(b+1)}_\lambda}\,
        \biggl(\prod_{i=1}^N \widetilde\theta^{(b+1)}_\lambda(\mu^{(i)})\biggr)
        \prod_{(i,j)\in Y_\lambda}
        G\left(z((b+1)(j-1)-(i-1))\right).
    \end{align*}
\end{definition}

To apply the general idea of looking for the poles of smallest modulus of $f_b(z)$, let us first give a simple lemma.

\begin{lemma}
\label{lemma:simple}
    Let $\alpha\in\mathbb{C}$ and $d\in\Z_{\geq 1}$ and let $$R_\alpha(\lambda)=\max_{(i,j)\in Y_\lambda}|\alpha(j-1)-(i-1)|.$$

    Then we have the following cases:
    
    if $|\alpha|>1$, $R_\alpha(\lambda)< R_\alpha\bigl((d)\bigr)=|\alpha|(d-1)$ for all $\lambda\vdash d$ with $\lambda\not=(d)$;
    
    if $|\alpha|<1$, $R_\alpha(\lambda)< R_\alpha\bigl((1^d)\bigr)=d-1$ for all $\lambda\vdash d$ with $\lambda\not=(1^d)$;
    
    if $|\alpha|=1$, $R_\alpha(\lambda)<  R_\alpha\bigl((d)\bigr)=R_\alpha\bigl((1^d)\bigr)=d-1$ for all $\lambda\vdash d$ with $\lambda\not=(d),(1^d)$.
\end{lemma}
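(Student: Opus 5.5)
The plan is to bound each term $|\alpha(j-1)-(i-1)|$ box by box. I write $p=j-1\ge 0$ and $q=i-1\ge 0$ for a box $(i,j)\in Y_\lambda$, and use two facts: the triangle inequality $|\alpha p-q|\le |\alpha|p+q$, and a combinatorial constraint on $p+q$ coming from $|\lambda|=d$. For the latter, note that if $(i,j)\in Y_\lambda$ then all boxes $(1,1),\dots,(1,j)$ and $(1,1),\dots,(i,1)$ lie in $Y_\lambda$. These are $i+j-1$ distinct boxes, so $p+q\le d-1$. If moreover $i,j\ge 2$, the box $(i,j)$ itself is an additional distinct box, giving the sharper bound $p+q\le d-2$.

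The argument then splits according to the position of the box. If $i,j\ge 2$, we get $|\alpha p-q|\le \max(|\alpha|,1)(p+q)\le \max(|\alpha|,1)(d-2)$. This is strictly smaller than $\max(|\alpha|,1)(d-1)$ since $d\ge 1$ and $\max(|\alpha|,1)>0$. If the box is in the first row ($i=1$), its value is $|\alpha|p$ with $p\le \lambda_1-1$. Equality $p=d-1$ is possible only when $\lambda_1=d$, i.e.\ $\lambda=(d)$. Symmetrically, a box in the first column ($j=1$) contributes $q\le \ell(\lambda)-1$, with $q=d-1$ only for $\lambda=(1^d)$.

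Next I compute the extremal values. For $\lambda=(d)$ the boxes contribute $|\alpha|p$ with $p=0,\dots,d-1$, so $R_\alpha((d))=|\alpha|(d-1)$. For $\lambda=(1^d)$ the contributions are $q=0,\dots,d-1$, so $R_\alpha((1^d))=d-1$.

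Finally I combine these in the three cases. For $\lambda\ne(d)$, every first-row box gives at most $|\alpha|(d-2)$. For $\lambda\ne(1^d)$, every first-column box gives at most $d-2$. Interior boxes always give at most $\max(|\alpha|,1)(d-2)$.
\begin{itemize}
\item If $|\alpha|>1$ and $\lambda\neq(d)$, every box gives at most $\max\bigl(|\alpha|(d-2),\,d-1\bigr)$, which is strictly less than $|\alpha|(d-1)$.
\item If $|\alpha|<1$ and $\lambda\neq(1^d)$, every box gives at most $\max\bigl(|\alpha|(d-1),\,d-2\bigr)<d-1$.
\item If $|\alpha|=1$ and $\lambda\ne (d),(1^d)$, all three kinds of boxes give at most $d-2<d-1$.
\end{itemize}

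The only delicate point is the edge case $d=1$, where $(d)=(1^d)$ and all the quantities vanish; there the strict inequalities are vacuous because no other $\lambda$ exists. A second subtlety is that one might worry about cancellation-free equality in the triangle inequality for special $\alpha$ such as $\alpha=-1$. This does not matter, because the sharper count $p+q\le d-2$ for interior boxes already gives the strict inequality without any appeal to the equality case of the triangle inequality.
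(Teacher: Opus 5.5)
Your proof is correct and follows essentially the same route as the paper, which bounds $|\alpha(j-1)-(i-1)|\le \max\{|\alpha|,1\}(\lambda_i+i-2)$ by the triangle inequality and uses $\lambda_i+i\le d+1$, with equality only for $(d)$ and $(1^d)$. Your separate treatment of first-row, first-column and interior boxes makes explicit the check that the paper leaves to ``follows easily'': for $|\alpha|>1$ the partition $(1^d)$, and for $|\alpha|<1$ the partition $(d)$, is where the paper's crude bound fails to be strict, yet it still gives a strictly smaller value ($d-1<|\alpha|(d-1)$, resp.\ $|\alpha|(d-1)<d-1$, for $d\ge 2$).
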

\begin{proof}
By the definition of $Y_\lambda$ in~\eqref{eq:Ylambda}, for any $(i,j)\in Y_\lambda$ we have
\begin{equation}
    |\alpha (j-1)-(i-1)|\leq |\alpha|(\lambda_i-1)+(i-1)\leq 
    \max\lbrace|\alpha|,1\rbrace (\lambda_i+i-2).
\end{equation}
For any $\lambda\vdash d$ we have $\lambda_i+i\leq d+1$ for all $i\leq \ell(\lambda)$, and the equality is possible for the partitions $\lambda=(d)$ or $\lambda=(1^d)$ only.
The statement then follows easily.
\end{proof}

Because of this lemma, again only the partitions $\lambda=(d)$ and $\lambda=(1^d)$ play a role in the asymptotic problem (to leading order), hence we need formulas for $j^{(\alpha)}_\lambda$ and $\widetilde\theta_{\lambda}^{(\alpha)}$ when $\lambda=(d)$ and $\lambda=(1^d)$.
For~$j^{(\alpha)}_\lambda$, directly from~\eqref{eq:jJack} we get
\begin{equation}
\label{eq:Jack1}
    j^{(\alpha)}_{(d)} \,=\,d!\,\alpha^{d}
    \prod_{m=1}^{d-1}(1+m\alpha),\qquad
    j^{(\alpha)}_{(1^d)} \,=\, \,d!
    \prod_{m=1}^{d-1}(1+m\alpha).
\end{equation}
For the Jack characters, the relevant special values are well-known:
\begin{equation}
    \label{eq:Jack2}
    \widetilde\theta_{(d)}^{(\alpha)}(\mu) \,=\, \alpha^{d-\ell(\mu)},\qquad
    \widetilde\theta_{(1^d)}^{(\alpha)}(\mu) \,=\, (-1)^{d-\ell(\mu)}.
\end{equation}
For the proof, see for example~\cite[Proposition~2.2(b) and Theorem 3.3]{StanleyJack}.

It is natural to assume $\Re\alpha>0$, for otherwise $j_\lambda^{(\alpha)}$ may vanish. This corresponds to assuming $\Re b>-1$; the next result naturally extends to a larger domain for $b$ but we do not enter into details here.
\begin{theorem}
Let $K\in\mathbb{Z}_{\geq 1}$ and consider the rational function of $z$
\begin{equation}
G(z) =
 \frac{(1+u_1 z)\cdots(1+u_L z)}
 {(1-z)^K(1-v_1 z)\cdots(1-v_M z)}.
\end{equation}
Then, for any $b\in\mathbb C$ with $\Re b>-1$, the large genus asymptotic of the associated hypergeometric $b$-content Hurwitz numbers is described by:
\begin{align*}
&\Big[
\prod_{i=1}^L u_i^{a_i}
\prod_{j=1}^{M} v_j^{b_j}
\Big] H^{\bullet, G, b}_r
\sim
\\
&\sim
\frac{r^{K-1}}{(K-1)!} 
\frac{(d-1)^{K(d-2)-\sum a_i-\sum b_j+r}}
{d!\bigl(\prod_{m=1}^{d-1}(1+(b+1)m)\bigr)(d-2)!^K}
\prod_{i=1}^L \biggl[\begin{matrix} d \\ d - a_i \end{matrix}\biggr]
\prod_{j=1}^{M} \biggl\{\begin{matrix} b_j + d - 1 \\ d-1 \end{matrix}\biggr\}\,\times
\\
&\quad\times\,
\begin{cases}
(b+1)^{r+(N-1)d - \sum_j \ell(\mu^{(j)})}, & |b+1|>1; 
\\ 
(-1)^{r+Nd - \sum_j \ell(\mu^{(j)})}, & |b+1|<1 ; 
\\
(b+1)^{r+(N-1)d - \sum_j \ell(\mu^{(j)})}+(-1)^{r+Nd - \sum_j \ell(\mu^{(j)})}, & |b+1|=1; 
\end{cases}
\end{align*}
as $r\to +\infty$.
\end{theorem}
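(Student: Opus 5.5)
Set $\alpha=b+1$, so $\Re\alpha>0$ and every $j^{(\alpha)}_\lambda$ is nonzero. The plan mirrors Lemma~\ref{lem:A:pm:case} and Theorem~\ref{thm:large:g:monotone:K}. Each summand of $f_b(z)$ is a rational function of $z$. Its poles nearest the origin come from the factor $(1-z)^{-K}$ in $G$, evaluated at $z$ times the deformed content $c=\alpha(j-1)-(i-1)$. We take $|v_j|<1$ (or treat the $u_i,v_j$ formally/small), so this factor gives a pole of order $K$ at $z=1/c$, and the poles of $f_b$ nearest the origin are at $1/c$ with $|c|$ maximal. By Lemma~\ref{lemma:simple}, the maximum of $|c|$ over all $\lambda\vdash d$ and all boxes is attained only at $\lambda=(d)$ (box $(1,d)$, $c=\alpha(d-1)$) when $|\alpha|>1$, and only at $\lambda=(1^d)$ (box $(d,1)$, $c=-(d-1)$) when $|\alpha|<1$. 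When $|\alpha|=1$ it is attained at both, giving $T=2$ poles of the same modulus; these coincide only if $\alpha=-1$, which $\Re\alpha>0$ excludes. In each of these extremal diagrams the extremal content occurs in exactly one box, so the pole has order exactly $K$. All other poles, including those coming from non-extremal boxes of the extremal diagrams, lie strictly farther out. So $f_b$ has the form required by Corollary~\ref{corollary:asymptotic}, with $\rho_1=\alpha(d-1)$ and/or $\rho_2=-(d-1)$.

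Next we compute the leading coefficients $A_K^{(t)}=\lim(1-z\rho_t)^K f_b(z)$. Only the extremal partition contributes to each limit.

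\emph{Case $\lambda=(d)$.} The contents are $\alpha m$ for $m=0,\dots,d-1$. Using \eqref{eq:Jack1} and \eqref{eq:Jack2}, the prefactor is
\begin{equation*}
\frac{\alpha^{\sum_j(d-\ell(\mu^{(j)}))}}{d!\,\alpha^d\prod_{m=1}^{d-1}(1+m\alpha)}.
\end{equation*}
Evaluating $\prod_{m=1}^{d-2}G(\alpha m z)$ at $z=1/(\alpha(d-1))$ gives $G(m/(d-1))$. Exactly as in the proof of Lemma~\ref{lem:A:pm:case}, this produces
\begin{equation*}
\frac{(d-1)^{K(d-2)}}{(d-2)!^K},
\end{equation*}
times the products $\prod_{i=1}^L\prod_{s=1}^{d-1}\bigl(1+\tfrac{su_i}{d-1}\bigr)$ and $\prod_{j=1}^M\prod_{s=1}^{d-1}\bigl(1-\tfrac{sv_j}{d-1}\bigr)^{-1}$. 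Note that the $u,v$ factors at $m=d-1$ are included here, with argument $1$ times $u_i$ or $v_j$.

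\emph{Case $\lambda=(1^d)$.} The contents are $-m$, so the computation is identical except that the prefactor is $(-1)^{\sum_j(d-\ell(\mu^{(j)}))}/(d!\prod_m(1+m\alpha))$.

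In both cases we then expand the $u,v$ products with the Stirling generating functions of Appendix~\ref{sec:Jucys}, as before, to get the unsigned Stirling numbers of the first kind and the Stirling numbers of the second kind with the powers $(d-1)^{-a_i}$, $(d-1)^{-b_j}$.

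Finally, Corollary~\ref{corollary:asymptotic} gives $[z^r]f_b\sim\frac{r^{K-1}}{(K-1)!}\sum_t A_K^{(t)}\rho_t^r$. We have $\rho_1^r=\alpha^r(d-1)^r$ and $\rho_2^r=(-1)^r(d-1)^r$. Combining the powers gives
\begin{equation*}
\alpha^{r+Nd-\sum_j\ell(\mu^{(j)})-d}=\alpha^{r+(N-1)d-\sum_j\ell(\mu^{(j)})}\quad\text{and}\quad(-1)^{r+Nd-\sum_j\ell(\mu^{(j)})},
\end{equation*}
which are exactly the three cases of the statement. Extracting the coefficient of the $u,v$ monomial commutes with the asymptotics, since everything is polynomial in those variables for fixed exponents.

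The main obstacle is the $|\alpha|=1$ case. There $\alpha^r+(\pm1)^r$ can vanish or nearly cancel, so the asymptotic equivalence must be read, as in Corollary~\ref{corollary:asymptotic}, along those $r$ for which the leading sum is nonzero. A second point needing care is ensuring genuinely no other box and no other $\lambda$ ties in modulus; this is provided by the strict inequalities of Lemma~\ref{lemma:simple}.
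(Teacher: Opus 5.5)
Your strategy is the paper's. You use Lemma~\ref{lemma:simple} to isolate $(d)$ and/or $(1^d)$, take the order-$K$ polar coefficient at the single extremal box, expand the $u,v$ products via Stirling numbers, and conclude with Corollary~\ref{corollary:asymptotic}. For $|b+1|>1$ this is correct. You are also more careful than the paper on three points: the exact pole order, the fact that the two poles on $|\alpha|=1$ never merge, and coefficient extraction. (The cleanest way to do the latter is to apply the corollary directly to the rational function $[\prod_i u_i^{a_i}\prod_j v_j^{b_j}]f_b(z)$, whose poles lie among the points $1/c$.)

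The genuine gap is the $(1^d)$ case, where you import $j^{(\alpha)}_{(1^d)}=d!\prod_{m=1}^{d-1}(1+m\alpha)$ from \eqref{eq:Jack1}. That value is wrong; the formula for $(d)$ is fine. In \eqref{eq:jJack} the box $(i,1)$ of $(1^d)$ has arm $\lambda_i-1=0$ and leg $\lambda'_1-i=d-i$. Its two factors are therefore $d-i+1$ and $d-i+\alpha$, whence
\[
j^{(\alpha)}_{(1^d)}=d!\,\alpha(\alpha+1)\cdots(\alpha+d-1).
\]
As a check, $J^{(\alpha)}_{(1,1)}=2e_2=p_1^2-p_2$ has $\alpha$-norm $2\alpha^2+2\alpha$, not $2(1+\alpha)$. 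So the $(1^d)$ computation is not the $(d)$ one up to sign. The term carrying $(-1)^{r+Nd-\sum_j\ell(\mu^{(j)})}$ comes with $1/\bigl(d!\,(b+1)(b+2)\cdots(b+d)\bigr)$, which in general differs from $1/\bigl(d!\prod_{m=1}^{d-1}(1+(b+1)m)\bigr)$. For $d=2$ they differ by the factor $b+1$, including on the circle $|b+1|=1$.

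Consequently the displayed statement fails in the cases $|b+1|\le 1$ (e.g.\ for $d=2$ whenever $b\neq 0$), and no argument can close this step. Take $d=2$, $N=L=M=0$, $K=1$. Then exactly $[z^r]f_b=\frac{\alpha^{r}}{2\alpha^{2}(1+\alpha)}+\frac{(-1)^{r}}{2\alpha(1+\alpha)}$, whereas the statement predicts $\frac{(-1)^r}{2(1+\alpha)}$ for $|\alpha|<1$. The paper's own proof makes the same substitution. What your argument actually proves is:
\begin{itemize}
\item the case $|b+1|>1$ exactly as stated;
\item the cases $|b+1|<1$ and $|b+1|=1$, but with $\prod_{m=1}^{d-1}(1+(b+1)m)$ replaced by $(b+1)(b+2)\cdots(b+d)$ in the $(-1)^{r+Nd-\sum_j\ell(\mu^{(j)})}$ term.
\end{itemize}

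A secondary point concerns $|b+1|=1$, where the two leading terms have equal modulus. Restricting to $r$ where their sum is nonzero does not by itself control near-cancellation when $K\geq 2$, because the discarded terms are only $O\bigl(r^{K-2}(d-1)^r\bigr)$. The robust conclusion is $[z^r]f_b=\frac{r^{K-1}}{(K-1)!}(d-1)^r\bigl(A^{(1)}\alpha^r+A^{(2)}(-1)^r+O(r^{-1})\bigr)$.
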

\begin{proof}
The proof follows along similar lines as the proof of Theorem~\ref{thm:large:g:monotone:K}.
Let us consider separately the cases $|b+1|>1$, $|b+1|<1$, and $|b+1|=1$, and assume $|a_i|<1$ and $|b_j|<1$ (for $1\leq i\leq L$ and $1\leq j\leq M$).

First, if $|b+1|>1$, Lemma~\ref{lemma:simple} implies that $f_b(z)$ satisfies the hypothesis of Corollary~\ref{corollary:asymptotic} with $T=1$, $\rho_1=(b+1)(d-1)$, and $A_K^{(1)}$ given by (using~\eqref{eq:Jack1} and~\eqref{eq:Jack2})
\begin{align*}
A_K^{(1)} 
&= \lim_{z \to ((b+1)(d-1))^{-1}} \frac{(b+1)^{(N-1)d-\sum_{j=1}^N\ell(\mu^{(j)})}}{d!\prod_{m=1}^{d-1}\bigl(1+(b+1)m\bigr)} \Big( 1 - z(b+1)(d-1) \Big)^K \prod_{j = 1}^{d-1} G\left( jz(b+1) \right)
\\
&= \lim_{z \to (d-1)^{-1}}  \frac{(b+1)^{(N-1)d-\sum_{i=1}^N\ell(\mu^{(i)})}}{d!\prod_{m=1}^{d-1}\bigl(1+(b+1)m\bigr)} \Big( 1 - z(d-1) \Big)^K \prod_{j = 1}^{d-1} G\left( jz\right)
\\
&=
\frac{(b+1)^{(N-1)d - \sum_j \ell(\mu^{(j)})}
}{
d!\prod_{m=1}^{d-1}\bigl(1+(b+1)m\bigr)}
\frac{1}{
(1-\tfrac{1}{d-1})^K\cdots
(1-\tfrac{d-2}{d-1})^K
}
\frac{\prod_{i=1}^L\prod_{s=1}^{d-1}
\left(1+\frac{s u_i}{d-1}\right)}{\prod_{j=1}^{M}\prod_{s=1}^{d-1}
\left(1-\frac{s v_j}{d-1}\right)}
\\
&=
\frac{(b+1)^{(N-1)d - \sum_j \ell(\mu^{(j)})}(d-1)^{K(d-2)}}
{d!\biggl(\prod_{m=1}^{d-1}\bigl(1+(b+1)m\bigr)\biggr)(d-2)!^K}
\frac{
\prod_{i=1}^L \prod_{s=1}^{d-1}
\left(1+\frac{s u_i}{d-1}\right)
}{
\prod_{j=1}^{M} \prod_{s=1}^{d-1}
\left(1-\frac{s v_j}{d-1}\right)
}.
\end{align*}

Next, if $|b+1|<1$, Lemma~\ref{lemma:simple} implies that $f_b(z)$ satisfies the hypothesis of Corollary~\ref{corollary:asymptotic} with $T=1$, $\rho_1=1-d$, and $A_K^{(1)}$ given by (using~\eqref{eq:Jack1} and~\eqref{eq:Jack2})
\begin{align*}
A_K^{(1)} 
&= \lim_{z \to (d-1)^{-1}}  \frac{(-1)^{Nd-\sum_{i=1}^N\ell(\mu^{(i)})}}{d!\prod_{m=1}^{d-1}\bigl(1+(b+1)m\bigr)} \Big( 1 + z(d-1) \Big)^K \prod_{j = 1}^{d-1} G\left( -jz\right)
\\
&=
\frac{(-1)^{Nd - \sum_j \ell(\mu^{(j)})}
}{
d!\prod_{m=1}^{d-1}\bigl(1+(b+1)m\bigr)}
\frac{1}{
(1-\tfrac{1}{d-1})^K\cdots
(1-\tfrac{d-2}{d-1})^K
}
\frac{\prod_{i=1}^L\prod_{s=1}^{d-1}
\left(1+\frac{s u_i}{d-1}\right)}{\prod_{j=1}^{M}\prod_{s=1}^{d-1}
\left(1-\frac{s v_j}{d-1}\right)}
\\
&=
\frac{(-1)^{Nd - \sum_j \ell(\mu^{(j)})}(d-1)^{K(d-2)}}
{d!\biggl(\prod_{m=1}^{d-1}\bigl(1+(b+1)m\bigr)\biggr)(d-2)!^K}
\frac{
\prod_{i=1}^L \prod_{s=1}^{d-1}
\left(1+\frac{s u_i}{d-1}\right)
}{
\prod_{j=1}^{M} \prod_{s=1}^{d-1}
\left(1-\frac{s v_j}{d-1}\right).
}
\end{align*}
These computations show that in each case we can apply Corollary~\ref{corollary:asymptotic}, which then gives the statement of the theorem in a similar way as in the case $b=0$ discussed above. When $|b+1|=1$ we have to take into account the contribution of both poles and the statement follows in a similar manner.
\end{proof}

\section{Large genus of completed cycles Hurwitz numbers}
\label{sec:completed:cycles}

In this section we compute the large genus asymptotics of Hurwitz numbers with an arbitrary number $N$ of fixed ramifications and $(s+1)$-completed cycles as remaining ramifications.
Through the Gromov--Witten / Hurwitz correspondence we provide large genus asymptotics of the stationary sector of the Gromov--Witten theory of $\mathbb{CP}^1$, which can be extended to smooth algebraic curves of higher genus via the degeneration formula.

Consider the $(s+1)$-completed cycles Hurwitz numbers with 
\begin{equation}
\label{eq:CCHN}
H_r^{\bullet, s+1} \bigl(\mu^{(1)},\dots,\mu^{(N)}\bigr) :=
 \sum_{\lambda\vdash d} \biggl(\frac{\dim\lambda}{d!}\biggr)^2\biggl(\prod_{i=1}^N\frac{\chi_{\lambda}(\mathcal{C}_{\mu^{(i)}})}{\dim\lambda}\biggr)  \overline{f}_{s+1}(\lambda)^{r}
\end{equation}
where, in terms of the shifted Frobenius coordinates $(R;a'_1,\ldots,a_R';b'_1,\ldots,b_R')$ of $\lambda$ (see Section~\ref{sec:Shifted:Frob}) we have
\begin{equation}
\label{eq:defCCf}
    \overline{f}_{s}(\lambda)\,=\, \frac 1{s}\Biggl(\sum_{i=1}^{R} \left[ \bigl(a_i'\bigr)^{s} - \bigl(-b_i'\bigr)^{s} \right]\,+\,c_s\Biggr),\qquad c_s=(1-2^{-s})\zeta(-s)\,.
\end{equation}

The only dependence on the genus $g$ is in the exponent $r$:
by Riemann--Hurwitz,
\begin{equation}
\label{eq:RHcc}
    rs = 2g - 2 - d(N-2) + \sum_{i=1}^N \ell(\mu^{(i)})\,.
\end{equation}
Hence
\begin{equation}
r \sim \frac{2}{s} \cdot g,\qquad g\to+\infty.
\end{equation}

Similar to the previous section, we will see that only the trivial partitions contribute to the leading asymptotics.
Indeed, we have the following lemma.

\begin{lemma}
\label{lemma:CC}
    Fix $d\in\mathbb{Z}_{\geq 1}$.
    Then, for all $s>1$ and for all $\lambda\in\partitions_d$ with $\lambda\not=(d),(1^d),(d-1,1),(2,1^{d-2})$, we have
    \begin{equation}
\bigl|\overline f_{s}(\lambda)\bigr|<\bigl|\overline f_{s}\bigl((d-1,1)\bigr)\bigr|
=\bigl|\overline f_{s}\bigl((2,1^{d-2})\bigr)\bigr|<\bigl|\overline f_{s}\bigl((d)\bigr)\bigr|
=\bigl|\overline f_{s}\bigl((1^d)\bigr)\bigr|\,.
\end{equation}
\end{lemma}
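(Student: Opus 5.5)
The plan is to rewrite $\overline f_s(\lambda)$ as a sum over the boxes of $Y_\lambda$ of a function of the content, and then compare multisets of contents. The starting point is the standard telescoping identity for shifted Frobenius coordinates $a_i'=\lambda_i-i+\tfrac12$, $b_i'=\lambda'_i-i+\tfrac12$:
\begin{equation*}
\sum_{i=1}^{R}\Bigl[(a_i')^{s}-(-b_i')^{s}\Bigr]=\sum_{(i,j)\in Y_\lambda} g_s(j-i),\qquad g_s(c):=\bigl(c+\tfrac12\bigr)^{s}-\bigl(c-\tfrac12\bigr)^{s}.
\end{equation*}
To prove it, decompose $Y_\lambda$ into its $R$ diagonal hooks. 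The $i$-th hook has contents running from $-(b_i'-\tfrac12)$ to $a_i'-\tfrac12$, so summing $g_s$ along it telescopes to $(a_i')^s-(-b_i')^s$. Hence
\begin{equation*}
\overline f_s(\lambda)=\tfrac1s\Bigl(\sum_{\square\in Y_\lambda}g_s(c(\square))+c_s\Bigr).
\end{equation*}
Two facts about $g_s$ for $s\ge 2$ drive everything. First, $g_s(-c)=(-1)^{s+1}g_s(c)$. Second, $|g_s(c)|$ depends only on $|c|$ and is strictly increasing in $|c|$ for half-integer-free integer arguments $c\ge 0$, with $g_s(0)=0$ when $s$ is even. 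Moreover $c_s=0$ for $s$ even, since $\zeta$ vanishes at negative even integers. The first fact gives the two equalities in the statement directly. Conjugation $\lambda\mapsto\lambda'$ negates all contents, so it multiplies $\sum g_s$ by $(-1)^{s+1}$. For $s$ odd the sign is $+$, and for $s$ even $c_s=0$, so $|\overline f_s(\lambda')|=|\overline f_s(\lambda)|$.

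The combinatorial input is that the contents of $\lambda\vdash d$ fill an integer interval $[-q,p]$ containing $0$, with $p=\lambda_1-1$ and $q=\ell(\lambda)-1$. Here every value in the interval is attained, and $p+q\le d-1$. Sort the boxes by $|c|$. The number of boxes with $|c|\le m$ is then at least $\min(d,m+1)$, so the $k$-th smallest $|c|$ is at most $k-1$. If moreover $p,q\ge1$, i.e.\ $\lambda\ne(d),(1^d)$, then for $m\ge1$ that count is at least $\min(d,m+2)$, so the sorted list of $|c|$ is dominated termwise by the list $0,1,1,2,\dots,d-2$ of $(d-1,1)$.

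For $s$ odd, all $g_s>0$. Strict monotonicity then gives $\sum g_s(c)\le\sum_{c=0}^{d-1}g_s(c)$, with equality only when the sorted $|c|$ equal $0,1,\dots,d-1$, which forces $\lambda=(d)$ or $(1^d)$. Likewise, for $\lambda\ne(d),(1^d)$ one gets $\sum g_s(c)\le$ the value at $(d-1,1)$, with equality only for hooks whose content interval is $[-1,d-2]$ or $[-(d-2),1]$, i.e.\ $(d-1,1)$ or $(2,1^{d-2})$. Since the same constant $c_s$ is added everywhere, I still need to check that $\sum g_s+c_s>0$ in all relevant cases, so that absolute values preserve order. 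This should follow from $|c_s|<g_s(0)=2^{1-s}$, which can be checked from $\zeta(-s)=-B_{s+1}/(s+1)$ for small $s$, or handled by a crude bound.

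The even case is the main obstacle, because $g_s$ is odd there and cancellations occur. For instance, $\overline f_s((d-1,1))=\tfrac1s\sum_{c=2}^{d-2}g_s(c)$, while $\overline f_s((d))=\tfrac1s\sum_{c=1}^{d-1}g_s(c)$, and the latter is clearly larger. For a general $\lambda\ne(d),(1^d)$ with $p,q\ge1$, assume by conjugation that $\sum g_s\ge0$. There are at most $d-2$ boxes of positive content, since the box of content $0$ and at least one box of negative content are excluded. The $k$-th smallest positive content is at most $k$, so the positive contributions total at most $\sum_{c=1}^{d-2}g_s(c)$. The negative boxes contribute at most $-g_s(1)$. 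Together this gives $\sum g_s\le\sum_{c=2}^{d-2}g_s(c)$. Equality forces exactly one negative box, exactly $d-2$ positive boxes with contents $1,\dots,d-2$, and a single box of content $0$; this is $(d-1,1)$. Its conjugate arises from the other sign. Small $d$ (where the excluded set exhausts $\partitions_d$) is vacuous and will be dismissed at the outset.
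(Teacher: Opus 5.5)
Your content formula $\overline f_s(\lambda)=\frac1s\bigl(\sum_{\square\in Y_\lambda}g_s(c(\square))+c_s\bigr)$ is correct, and so is the termwise comparison of the sorted list of $|c|$ with $0,1,\dots,d-1$ and with $0,1,1,2,\dots,d-2$. Together they give a complete proof for even $s$, where $c_s=0$. For odd $s$ they prove the strict chain for $S(\lambda):=\sum_{\square\in Y_\lambda}g_s(c(\square))$. This is a different route from the paper's. The paper, for even $s$, moves boxes upward in dominance order using the row formula; for odd $s$ it merges diagonal hooks in shifted Frobenius coordinates and then compares hooks. Your majorization argument identifies the runner-up partitions $(d-1,1)$, $(2,1^{d-2})$ more transparently than the paper's dominance-order remark.

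The gap is the passage from $S$ to $|\overline f_s|$ for odd $s$. For reals $x<y$ one has $|x+c|<|y+c|$ if and only if $x+y+2c>0$. So you need $S(\lambda)+S((d-1,1))+2c_s>0$ for every non-excluded $\lambda$, and also $S((d-1,1))+S((d))+2c_s>0$.

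Your proposed bound $|c_s|<2^{1-s}$ is false already at $s=9$, where $|c_9|=\frac{511}{512}\cdot\frac{1}{132}\approx 7.6\cdot 10^{-3}>2^{-8}$. No crude bound can replace it. Indeed $|\zeta(-s)|=2\,s!\,\zeta(s+1)/(2\pi)^{s+1}$ grows factorially and $c_s<0$ for $s\equiv 1\pmod 4$, while $S(\lambda)\le (d-\tfrac12)^s+2^{-s}$.

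Concretely, take $d=3$ and $s=41$. Then $S((3))\approx 2.07\cdot 10^{16}$, $S((2,1))=2(3/2)^{41}\approx 3.3\cdot 10^{7}$ and $c_{41}\approx -2.00\cdot 10^{16}$. Hence $S((2,1))+S((3))+2c_{41}<0$, so $|\overline f_{41}((2,1))|>|\overline f_{41}((3))|$, and the lemma as stated is false in this range.

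The paper's proof has the same hole: for odd $s$ it only maximizes $\sum_i\bigl((a_i')^s+(b_i')^s\bigr)$ and never addresses $c_s$. The repair is therefore an extra hypothesis, not a sharper estimate. Combined with your ordering of $S$, the statement for odd $s$ is equivalent to the two positivity conditions above. They hold for $s\equiv 3\pmod 4$, where $c_s>0$. They also hold whenever $d\,2^{1-s}>|c_s|$, because for odd $s$ one has $g_s\geq g_s(0)=2^{1-s}$, hence $S(\lambda)\geq d\,2^{1-s}$.

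Finally, small $d$ is not vacuous. For $d\le 3$ only the first inequality is empty. For $d=2$ the middle strict inequality actually fails, since $(d-1,1)=(1^2)$. So $d\geq 3$ must be assumed.
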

\begin{proof}
First consider $s>1$ even, such that the function $\phi(x)=(x+1)^{s}-x^{s}$ is increasing for all $x\in\mathbb{R}$.
Using the equivalent formula
\begin{equation}
\overline{f}_{s}(\lambda)=\frac 1{s}\sum_{i=1}^\infty
\biggl[\biggl(\lambda_i-i+\frac 12\biggr)^{s}-\biggl(-i+\frac 12\biggr)^{s}\biggr]
\end{equation}
(which is actually a finite sum and where we note that $\zeta(-s)=0$ for even $s$) we see that if $\lambda,\widetilde{\lambda}\in\partitions_d$ are related by
    \begin{equation}
\widetilde{\lambda}_i = \lambda_i + 1,\qquad
\widetilde{\lambda}_j = \lambda_j - 1,
    \end{equation}
    for some $i<j$ (while $\widetilde{\lambda}_k=\lambda_k$ for all $k\not=i,j$), we have
    \begin{equation}
    \overline f_{s}(\widetilde{\lambda})-\overline f_{s}(\lambda)=\frac{\phi(\lambda_i)-\phi(\lambda_j-1)}{s}>0.
    \end{equation}
Starting from any partition $\lambda$, we can apply iteratively transformations of the type $\lambda\mapsto\widetilde{\lambda}$ to reach the partition $(d)$.
We conclude that the maximum is obtained by the partition $(d)$ and the only case next-to-maximum comes from $(d-1,1)$, which is the only partition from which the only admissible transformation $\lambda\mapsto\widetilde{\lambda}$ of the form above cannot reach any partition other than~$(d)$. The proof is complete in the case $s$ even by noting that $\overline f_{s}(\lambda^T)=-\overline f_{s}(\lambda)$ (where $\lambda^T$ is the transposed partition of $\lambda$).

On the other hand, when $s$ is odd we have
\begin{equation}
\overline{f}_{s}(\lambda) = \frac 1{s}\Biggl(\sum_{i=1}^{R} \left[ \bigl(a_i'\bigr)^{s} +\bigl(b_i'\bigr)^{s} \right]\,+\, c_s \Biggr)
\end{equation}
subject to the constraints $a_i',b_i'\in\mathbb{Z}+\frac 12$, $\sum_{i=1}^R(a_i'+b_i')=d$, $a_i'>a_{i+1}'\geq \frac 12$, $b_i'>b_{i+1}'\geq \frac 12$.
Since the inequalities
\begin{equation}
    (x+y\pm \tfrac 12)^{s}-x^{s}-y^{s}>\mp \tfrac 12
\end{equation}
hold for all $x,y\geq \tfrac 12$ and for all $s>1$, we see that the transformation from the partition $\lambda$ with shifted Frobenius coordinates $(R;a_1',\dots,a_R';b_1',\dots,b_R')$ to the partition $\widetilde{\lambda}$ with shifted Frobenius coordinates $(R-1;a_1'+a_R'+\tfrac 12,a_2',\dots,a_{R-1}';b_1'+b_R'-\tfrac 12,b_2',\dots,b_{R-1}')$ necessarily satisfies $\overline f_{s}(\widetilde{\lambda})>\overline{f}_{s}(\lambda)$.
Hence, we can always strictly increase the value of $\overline {f}_{s}$ until we reach a hook partition, namely, a partition of the form $(1;a',b')$ in shifted Frobenius coordinates.
Since the inequality
\begin{equation}
(x+1)^{s}+(y-1)^{s}-x^{s}-y^{s}>0    
\end{equation}
holds for all $x\geq y\geq 1$ and all $s>1$, if $a'\geq b'\geq 3/2$ we can strictly increase the value of $\overline f_{s}$ by going to the partition $(1;a'+1;b'-1)$ while if $b'\geq a'\geq 3/2$ we can strictly increase the value of $\overline f_{s}$ by going to the partition $(1;a'-1;b'+1)$. The statement now follows easily.
\end{proof}

This leads to the following theorem.

\begin{theorem}\label{thm:large:g:completed} We have the following large genus asymptotics for the completed cycles Hurwitz numbers:
\begin{align*}
    H_r^{\bullet, s+1} &\bigl(\mu^{(1)},\dots,\mu^{(N)}\bigr) \sim 
    \frac{2}{d!^2\,(s+1)} \left[\left( d-\frac{1}{2} \right)^{s+1} - \left( -\frac{1}{2} \right)^{s+1} \,+\,\biggl(1-\frac{1}{2^{s+1}}\biggr)\zeta(-s-1)\right] ^r
\end{align*}
    for $sr =2g - 2 - d(N-2) + \sum_{i=1}^N \ell(\mu^{(i)})$ as $g \to \infty$.
\begin{proof}
By~\eqref{eq:CCHN} we see that $H_r^{\bullet, s+1}\bigl(\mu^{(1)},\dots,\mu^{(N)}\bigr) $ is a linear combination of $\overline f_{s+1}(\lambda)^r$ (with coefficients independent of $r$).
Hence, the leading asymptotic as $r\to+\infty$ is given by the terms corresponding to maximal $|\overline{f}_{s+1}(\lambda)|$, which, as seen above, correspond to $\lambda=(d),(1^d)$.
We have
    \begin{equation}
    \label{eq:Mdsmax}
    \overline{f}_{s+1}\bigl((d)\bigr) = M_{d,s},\qquad
    \overline{f}_{s+1}\bigl((1^d)\bigr) = (- 1)^{s} M_{d,s},
    \end{equation}
where
\begin{equation}
\label{eq:Mds}
M_{d,s}:=
    \frac{1}{s+1} \left[\left( d-\frac{1}{2} \right)^{s+1} - \left( -\frac{1}{2} \right)^{s+1} \,+\,c_{s+1}\right].
\end{equation}
Summing these two contributions, using~\eqref{eq:Mdsmax}, we get
\begin{equation}
H_r^{\bullet, s+1}
\sim \frac 1{d!}\,\Bigl(1+(-1)^{rs+Nd-\sum_{i}\ell(\mu^{(i)})}\Bigr)\,\bigl(M_{d,s}\bigr)^r,\qquad r\to+\infty,
\end{equation}
and using the Riemann--Hurwitz formula~\eqref{eq:RHcc} (along with the value of $c_s$ in~\eqref{eq:defCCf}) we complete the proof.
\end{proof}
\end{theorem}

As a corollary for $s=1$ and $N=1$ we retrieve the following recent theorem by N.~Do, J.~He and H.~Robertson \cite[Theorem 1.6]{DHR}.

\begin{theorem}
\begin{equation}
    H_r^{\circ, 2} \bigl( \mu \bigr) \sim \frac{2}{d!^2} \binom{d}{2}^{2g - 2 + d + \ell(\mu)} 
    \quad \text{ as } g \to \infty.
\end{equation}
\begin{proof}
By substituting $s=1$ and $N=1$ in Theorem~\ref{thm:large:g:completed} we find
    \begin{align*}
        H_r^{\circ, 2} \bigl( \mu \bigr) \sim \frac{2}{d!^2} \left(\frac{1}{2} \left[\left( d-\frac{1}{2} \right)^{2} - \left( -\frac{1}{2} \right)^{2} \right] \right)^{2g - 2 + d + \ell(\mu)} 
        =\frac{2}{d!^2} \binom{d}{2}^{2g - 2 + d + \ell(\mu)} 
    \end{align*}
    as desired. Connected and possibly disconnected numbers have the same large genus asymptotics by inclusion-exclusion formula.
\end{proof}
\end{theorem}

\subsection{Disconnected structure theorem and the coefficient gap}

\begin{theorem}[Structure theorem for disconnected completed cycles Hurwitz numbers]
\label{thm:structure:disconnected}
For all integers $s\geq 1$, we have
\begin{equation}
         H_r^{s+1, \bullet}(\mu^{(1)}, \dots, \mu^{(N)}) = \frac{2}{d!^{2}}
         \sum_{\substack{{m=\overline f_{s}(\lambda)}\\{\lambda \vdash d}}} 
         C_s^{\bullet}(\vec{\mu}, m) \, m^{r},
    \end{equation}
    where $sr = 2g - 2 - d(N-2) + \sum_{i=1}^N \ell(\mu^{(i)})$
    and
    \begin{equation}
        C_s^{\bullet}\left(\vec{\mu}, m \right) 
        =
        \!\!\!\!\!\!\!\!\!\!
        \sum_{\substack{\lambda \vdash d \\ \overline{f}_{s+1}(\lambda) = \pm m}}
        \!\!\!\!\!\!\!\!\!\!
        \frac{(\dim\lambda)^{2-N}}{2}
        \prod_{i=1}^N\chi_{\lambda}(\mathcal{C}_{\mu^{(i)}}).
    \end{equation}
    In particular the leading coefficient is
    \[C_s^{\bullet}\left(\vec{\mu}, M_{d,s}\right) = 1\]
    and just after the leading coefficient we have the coefficient gap
    \begin{equation}\label{eq:aaa}
        C^{\bullet}_s(\vec{\mu}, m) = 0 \; \text{ for } \;  \overline{f}_{s+1}((d-1,1)) < m < \overline{f}_{s+1}((d)).
    \end{equation}
\end{theorem}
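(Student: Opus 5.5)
Starting from the representation-theoretic formula \eqref{eq:CCHN}, I would group the sum over $\lambda\vdash d$ according to the value $\overline f_{s+1}(\lambda)$. Write $(\dim\lambda/d!)^2\prod_i \chi_\lambda(\mathcal{C}_{\mu^{(i)}})/\dim\lambda = \frac{1}{d!^2}(\dim\lambda)^{2-N}\prod_i\chi_\lambda(\mathcal{C}_{\mu^{(i)}})$. The sum then becomes $\frac{2}{d!^2}\sum_m C^\bullet_s(\vec\mu,m)m^r$, where the factor $\frac12$ in the definition of $C^\bullet_s$ compensates for the prefactor $2$.

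The one subtlety is the sign. The index $m$ ranges over the values $\overline f_{s+1}(\lambda)$ up to sign, and partitions with $\overline f_{s+1}(\lambda)=-m$ contribute $(-m)^r$ rather than $m^r$. To handle this I would use the transposition symmetry $\lambda\mapsto\lambda^T$, under which the shifted Frobenius coordinates swap, $a'\leftrightarrow b'$. This gives $\overline f_{s+1}(\lambda^T)=(-1)^s\,\overline f_{s+1}(\lambda)$ (for odd $s+1$ the constant $c_{s+1}$ vanishes), together with $\dim\lambda^T=\dim\lambda$ and $\chi_{\lambda^T}(\mathcal{C}_\mu)=(-1)^{d-\ell(\mu)}\chi_\lambda(\mathcal{C}_\mu)$.

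Pairing $\lambda$ with $\lambda^T$ then produces the sign $(-1)^{rs+Nd-\sum\ell(\mu^{(i)})}$, which equals $+1$ by the Riemann--Hurwitz relation \eqref{eq:RHcc}. So every term can be rewritten as a $+m^r$ contribution, and grouping by $\pm m$ with the factor $\frac12$ is exactly what accounts for both members of each pair. I would carry out this sign check carefully, including the self-conjugate case, since that is where the stated normalisation (the $\frac12$ and the sum over $\pm m$) must be verified rather than assumed. The index set of $m$ should be read as $\overline f_{s+1}(\lambda)$; I would silently correct the typo $\overline f_s$ in the statement.

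The leading coefficient comes from $\lambda=(d)$ and $\lambda=(1^d)$. Both have $\dim=1$, and their characters are $1$ and $(-1)^{d-\ell(\mu)}$ respectively. By \eqref{eq:Mdsmax} we have $\overline f_{s+1}=M_{d,s}$ and $(-1)^sM_{d,s}$, so $C^\bullet_s(\vec\mu,M_{d,s})=\frac12(1+(-1)^{Nd-\sum\ell})$. After the Riemann--Hurwitz parity fixes the sign this becomes effectively $\frac12\cdot 2=1$, consistent with the computation in the proof of Theorem~\ref{thm:large:g:completed}. Precisely, the combined contribution is $\frac12(M^r+((-1)^sM)^r(-1)^{Nd-\sum\ell})=M^r$.

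The gap \eqref{eq:aaa} follows from Lemma~\ref{lemma:CC}, applied with $s+1>1$. Every $\lambda$ other than $(d)$ and $(1^d)$ satisfies $|\overline f_{s+1}(\lambda)|\le|\overline f_{s+1}((d-1,1))|$, so no $m$ strictly between $\overline f_{s+1}((d-1,1))$ and $M_{d,s}$ arises and the coefficient vanishes there.

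The main obstacle is the $s=1$ case, since Lemma~\ref{lemma:CC} is stated for exponent $>1$ and $s+1=2$ is fine; I only need to confirm the strict inequality for $s+1=2$. This is direct, because $\overline f_2(\lambda)$ is the content sum: the value $\binom{d}{2}$ is attained only by $(d)$, and the next largest value $\binom{d-1}{2}-1$ is attained by $(d-1,1)$.
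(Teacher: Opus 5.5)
Your route is the paper's: pair $\lambda$ with $\lambda^T$, use the Riemann--Hurwitz parity, read off $(d)$ and $(1^d)$, and get the gap from Lemma~\ref{lemma:CC}. The gap part is fine. The sign check you postpone, however, is exactly where the argument fails.

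The relation \eqref{eq:RHcc} gives $(-1)^{rs+Nd-\sum_i\ell(\mu^{(i)})}=1$, i.e.\ $(-1)^{Nd-\sum_i\ell(\mu^{(i)})}=(-1)^{rs}$. It does \emph{not} give $(-1)^{Nd-\sum_i\ell(\mu^{(i)})}=1$. For $s$ odd this sign is $(-1)^r$, and odd $r$ does occur. Put $w(\lambda)=(\dim\lambda)^{2-N}\prod_i\chi_\lambda(\mathcal{C}_{\mu^{(i)}})$. When you rewrite a term with $\overline f_{s+1}(\lambda)=-m$ as a $+m^r$ contribution, its weight becomes $(-1)^r w(\lambda)=w(\lambda^T)$. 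The stated $C^\bullet_s(\vec\mu,m)$ instead adds $w(\lambda)$ itself, and the two differ precisely when $r$ is odd. In that case your own value $C^\bullet_s(\vec\mu,M_{d,s})=\tfrac12\bigl(1+(-1)^{Nd-\sum_i\ell(\mu^{(i)})}\bigr)$ is $0$. So the step ``after the Riemann--Hurwitz parity fixes the sign this becomes effectively $1$'' is not valid. Your combined contribution $M_{d,s}^r$ is correct, but it is not $C^\bullet_s(\vec\mu,M_{d,s})\,M_{d,s}^r$ with $C^\bullet_s$ as defined. Concretely, take $d=2$, $N=1$, $\mu=(2)$, $s=1$, so $r=2g+1$. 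Then $H^{2,\bullet}_r\bigl((2)\bigr)=\tfrac14\bigl(1-(-1)^r\bigr)=\tfrac12$, while $C^\bullet_1\bigl((2),1\bigr)=\tfrac12(1-1)=0$, so the right-hand side of the statement vanishes.

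So for $s$ odd the coefficient itself must be corrected, not just the typo $\overline f_s$; no argument can verify it as written. What your pairing argument proves, and what the paper's displayed computation for $s$ odd actually yields, is $H_r^{s+1,\bullet}(\vec\mu)=\frac{2}{d!^2}\sum_{m>0}m^r\sum_{\overline f_{s+1}(\lambda)=m}w(\lambda)$. Here $m$ runs over the \emph{positive} values only; otherwise each pair $\pm m$ is counted twice. The coefficient is therefore $\sum_{\overline f_{s+1}(\lambda)=m}w(\lambda)$, equivalently $\tfrac12\bigl(\sum_{\overline f_{s+1}(\lambda)=m}w(\lambda)+(-1)^r\sum_{\overline f_{s+1}(\lambda)=-m}w(\lambda)\bigr)$. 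For $s$ even there is no sign issue: $\overline f_{s+1}(\lambda^T)=\overline f_{s+1}(\lambda)$ and $w(\lambda^T)=(-1)^{rs}w(\lambda)=w(\lambda)$, so the coefficient is $\tfrac12\sum_{\overline f_{s+1}(\lambda)=m}w(\lambda)$.

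With these coefficients the leading value is $1$. For $s$ odd only $(d)$ sits at $+M_{d,s}$; for $s$ even, $(d)$ and $(1^d)$ each contribute $\tfrac12$. Your gap argument then goes through unchanged, since no $\lambda$ has $|\overline f_{s+1}(\lambda)|$ in the open interval. One minor slip: $c_{s+1}$ vanishes when $s+1$ is even, not odd.
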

\begin{proof}
When $s$ is odd, we have $c_{s+1}=0$ and $\overline{f}_{s+1}(\lambda^T)=-\overline{f}_{s+1}(\lambda)$ and so, by definition~\eqref{eq:CCHN} grouping together the partitions $\lambda$ and $\lambda^T$ (self-transposed partitions do not contribute), and using Lemma~\ref{lemma:CC}, we have
\begin{align}
    \nonumber
    H_r^{s+1, \bullet}(\mu^{(1)}, \dots, \mu^{(N)}) 
    &=
        \frac 1{d!^2}
        \!\!\!\!\!\!
        \sum_{
        m_i \in \overline{f}_s(\mathcal{P}_{d_i})}
        \!\!\!\!\!
        m^r
        \!\!\!\!\!
        \sum_{\substack{\lambda \vdash d \\ \overline{f}_{s+1}(\lambda) = m}}
        \!\!\!\!\!\!
        (\dim\lambda)^{2-N}\bigl(1+(-1)^{r+Nd-\sum_{i=1}^N\ell(\mu^{(i)})}\bigr)\prod_{i=1}^N\chi_\lambda(\mathcal C_{\mu^{(i)}})
\\  
&=\frac 2{d!^2}\sum_{m_i \in \overline{f}_s(\mathcal{P}_{d_i})}m^r\sum_{\substack{\lambda \vdash d \\ \overline{f}_{s+1}(\lambda) = m}}(\dim\lambda)^{2-N}\prod_{i=1}^N\chi_\lambda(\mathcal C_{\mu^{(i)}})
\end{align}
where we used Riemann--Hurwitz formula~\eqref{eq:RHcc} and that 
\[\chi_{\lambda^T}(\mathcal C_{\mu})\,=\,\chi_{(1^d)}(\mathcal C_\mu)\,\chi_\lambda(\mathcal C_\mu)\,=\,(-1)^{d-\ell(\mu)}\,\chi_\lambda(\mathcal C_\mu)\]
stemming from the well-known fact that the irreducible representations labeled by $\lambda$ and $\lambda^T$ are related by tensor product with the sign representation (labeled by $(1)^d$) --- a fact which implies the identity $\det\lambda=\det\lambda^T$, which we also used.

When $s$ is even, we have $\overline{f}_{s+1}(\lambda^T)=\overline{f}_{s+1}(\lambda)$, and we compute, in a similar way (without grouping together the partitions $\lambda,\lambda^T$ because in this case self-transposed partitions contribute),
\begin{align}
\nonumber
H_r^{s+1, \bullet}(\mu^{(1)}, \dots, \mu^{(N)}) =\frac 1{d!^2}\sum_{\substack{ \overline{f}_{s+1}(\lambda) = m \\ \lambda \vdash d}}m^r\sum_{\substack{\lambda \vdash d \\ \overline{f}_{s+1}(\lambda) = m}}(\dim\lambda)^{2-N}\prod_{i=1}^N\chi_\lambda(\mathcal C_{\mu^{(i)}})
\end{align}
and the proof is complete.
\end{proof}


We now employ the inclusion-exclusion principle to transfer the result above to the connected numbers.

\begin{proposition}[Polynomial connected Structure statement] We have 
\label{thm:structure:connected}
     \begin{equation}\label{eq:connected:hur}
        \widetilde{H}^{s+1,\circ}_r(\vec\mu) 
        \,=\,
        \frac{2}{d!^2} 
        \!\!\!\!\!\!\!\!\!\!\!\!
        \sum_{\substack{
        k \geq 1
        \\
        m_1,\dots, m_k \in \overline{f}_s(\mathcal{P}_d)
        \\
        {r_1,\dots, r_k \geq 1}}}
        \!\!\!\!\!\!\!\!\!\!\!\!
        C^{\circ}_{s, \vec{r}}(\vec{\mu}, \vec{m})
        \, \, \prod_{i=1}^k m_i^{r_i}
    \end{equation}
    where 
    \begin{equation} \label{eq:connected:coeff}
        C^{\circ}_{s, \vec{r}}(\vec{\mu}, \vec{m})
        = \frac{(-2)^{\ell(\vec{r})-1}}{\ell(\vec{r})}
        \!\!\!\!\!\!\!\!\!\!\!\!\!\!
        \!\!\!\!\!\!\!\!\!\!\!\!\!\!\!\!\!
        \sum_{\substack{
        \phantom{h}
        \\
        d_1, \dots, d_{\ell(\vec{r})} \geq 1 : \sum_i d_i = |\mu^{(1)}|
        \\
        \vec{\sigma_1},\dots, \vec{\sigma}_{\ell(\vec{r})} : \sigma^{(j)}_1\sqcup\cdots\sqcup\sigma^{(j)}_{\ell(\vec{r})}=\mu^{(j)} \, \wedge \; \sigma^{(j)}_i\vdash d_i
        \\
        g_1 + \cdots + g_{\ell(\vec{r})} = g + \ell(\vec{r}) - 1
        \\
        r_i s = 2g_i - 2 - d_i( \ell(\vec{\mu}) - 2 ) +\sum_{j=1}^{\ell(\vec{\mu})} l(\sigma_i^{(j)})
        }}
        \!\!\!\!\!\!\!\!\!\!\!\!\!\!\!\!\!\!\!\!\!\!\!\!\!\!\!\!\!\!
        \binom{d}{d_1\,\cdots\,d_{\ell(\vec{r})}}^2
        \binom{r}{r_1\,\cdots\,r_{\ell(\vec{r})}}
        \prod_{i=1}^{\ell(\vec{r})} C^\bullet_s(\vec\sigma_i,m_i).
    \end{equation}

    \begin{remark}
        The sums in \ref{eq:connected:coeff} and \ref{eq:connected:hur} are finite. 
        In fact $k$ is bounded by $d$, since any of the partitions $\mu^{(i)}$ is of size $d$, and $k$ is the number of the $\sigma$ disjoint subpartitions.
        Each $m_i$ varies over the values of $\bar{f}_s(\lambda)$. 
        And the $(r_i)_i$ are positive and bounded above by the Riemann--Hurwitz formula $r_is \leq 2g+k+dN$.
    \end{remark}
    
\begin{proof}
    We are going to apply the inclusion-exclusion principle to Theorem \ref{thm:structure:disconnected} to transfer the structure theorem from disconnected to the connected Hurwitz numbers. Introduce the set of variables $\mathbf{p} = \{p_{i}^{(j)}\}_{i \geq 0,j=1,\dots,N}$ and with $p_{0}^{(j)}=1$.
    For a vector of partitions $\vec \mu=\bigl(\mu^{(1)},\dots,\mu^{(N)}\bigr)\in(\partitions_d)^N$, we set
    \[p_{\vec\mu}\,=\,\prod_{j=1}^N \bigl(p_{\mu_1^{(j)}}p_{\mu_2^{(j)}}p_{\mu_3^{(j)}}\cdots\bigr)\]
    and introduce the possibly disconnected generating function
    \begin{equation}
        \mathbf{H}^{s+1,\bullet}(x,\mathbf p) = 1+\sum_{r\geq 0}\sum_{d\geq 1}\sum_{\vec{\mu}} \widetilde H^{s+1,\bullet}_r(\vec\mu)  \frac{x^r}{r!} p_{\vec\mu}
    \end{equation}
    as well as the connected generating function
    \begin{equation}
        \mathbf{H}^{s+1,\circ}(x,\mathbf p) = \sum_{r\geq 0}\sum_{d\geq 1}\sum_{\vec{\mu}} \widetilde H^{s+1,\circ}_r(\vec\mu)  \frac{x^r}{r!} p_{\vec\mu}
    \end{equation}
    where both for connected and possibly disconnected we have set
    \begin{equation}
    \widetilde{H}_r^{s+1}(\vec\mu) \, \coloneqq \, \biggl(\prod_{j=1}^N |\mathfrak{C}_{\mu^{(j)}}|\biggr)\,H^{s+1}_r(\vec\mu)\,. 
    \end{equation}
    Then it is well-known that the inclusion-exclusion relation between connected and disconnected Hurwitz numbers can be encoded in the algebraic relation
    \begin{equation}
         \mathbf{H}^{s+1,\circ}(x,\mathbf p) \,=\,\log \mathbf{H}^{s+1,\bullet}(x,\mathbf p) \,,
    \end{equation}
    see, e.g.,~\cite[Chapter~10]{CM}.
    Expanding the logarithm on the right-hand side we obtain
    \begin{equation}
        \widetilde{H}^{s+1,\circ}_r(\vec\mu) 
        \,=\,
        \sum_{k\geq 1}\frac{(-1)^{k-1}}{k}
        \!\!\!\!\!\!\!\!\!\!\!\!\!\!\!\!\!\!\!\!\!\!\!\!\!\!
        \sum_{\substack{(r_1,\vec\sigma_1),\ldots,(r_k,\vec\sigma_k) 
        \\
        r_i s = 2g_i - 2 - d_i(N-2) + \sum_{j=1}^N \ell(\sigma_i^{(j)})
        \\ 
        g_1 + \cdots + g_k = g + k  -1
        \\ 
        \sigma^{(j)}_i\vdash d_i \ \forall j=1,...,N
        \\ \sigma^{(j)}_1\sqcup\cdots\sqcup\sigma^{(j)}_k=\mu^{(j)}}}
        \!\!\!\!\!\!\!\!\!\!\!\!\!\!\!\!\!\!\!
        \binom{r}{r_1\,\cdots\,r_k}
        \prod_{i=1}^k\widetilde{H}_{r_i}^{s+1,\bullet}(\vec\sigma_i).
    \end{equation}
    such that
    \begin{equation}
        \widetilde{H}^{s+1,\circ}_r(\vec\mu) 
        \,=\,
        \sum_{k\geq 1}\frac{(-1)^{k-1}}{k}
        \!\!\!\!\!\!\!\!\!\!\!\!\!\!\!\!\!\!\!\!\!\!\!\!\!\!
        \sum_{\substack{(r_1,\vec\sigma_1),\ldots,(r_k,\vec\sigma_k)
        \\
        r_i s = 2g_i - 2 - d_i(N-2) + \sum_{j=1}^N \ell(\sigma_i^{(j)})
        \\ 
        g_1 + \cdots + g_k = g + k -1
        \\
        \sigma^{(j)}_i\vdash d_i \ \forall j=1,...,N
        \\ 
        \sigma^{(j)}_1\sqcup\cdots\sqcup\sigma^{(j)}_k=\mu^{(j)}
        \\
        m_i \in \overline{f}_s(\mathcal{P}_{d_i})
        }}
        \!\!\!\!\!\!\!\!\!\!\!\!\!\!\!\!\!\!\!\!\!
        \binom{r}{r_1\,\cdots\,r_k}
        \prod_{i=1}^k \frac{2}{(d_i!)^2}C^\bullet_s(\vec\sigma_i,m_i)m_i^{r_i}
    \end{equation}
    Rearranging the terms yields
    \begin{equation}
        \widetilde{H}^{s+1,\circ}_r(\vec\mu) 
        \,=\,
        \frac{2}{d!^2} \sum_{k\geq 1}\frac{(-2)^{k-1}}{k}
        \!\!\!\!\!\!\!\!\!\!\!\!\!\!\!\!\!\!\!\!\!\!\!\!\!\!
        \sum_{\substack{(r_1,\vec\sigma_1),\ldots,(r_k,\vec\sigma_k)
        \\
        r_i s = 2g_i - 2 - d_i(N-2) + \sum_{j=1}^N \ell(\sigma_i^{(j)})
        \\ 
        g_1 + \cdots + g_k = g + k-1 
        \\
        \sigma^{(j)}_i\vdash d_i \ \forall j=1,...,N
        \\ \sigma^{(j)}_1\sqcup\cdots\sqcup\sigma^{(j)}_k=\mu^{(j)}
        \\
        m_i \in \overline{f}_s(\mathcal{P}_{d_i})
        }}
        \!\!\!\!\!\!\!\!\!\!\!\!\!\!\!\!\!\!\!\!\!
        \binom{r}{r_1\,\cdots\,r_k}
        \binom{d}{d_1\,\cdots\,d_k}^2
        \prod_{i=1}^k C^\bullet_s(\vec\sigma_i,m_i)
        \prod_{i=1}^k m_i^{r_i}
    \end{equation}


Substituting the definition of the coefficients $C^{\circ}_{s, \vec{r}}$ concludes the proof of the proposition.
\end{proof}
\end{proposition}



\subsection{On the large genus of the orbifold Hurwitz numbers}
\label{sec:orbi}

By specialising the structure theorem above we obtain a new proof of \cite[Theorems 4.4 and 4.5]{DHR}. More precisely, let 
\begin{equation}
     s=1, \qquad N=2, \qquad \mu := \mu^{(1)}, \qquad \nu := \mu^{(2)} := \underbrace{(t, t, \dots, t)}_{d/t \text{ times}}, \quad t \in \mathbb{N}_{\geq 1}
\end{equation}
and indicate with $H_r^{\circ, [t]} \bigl( \mu \bigr)$ the resulting numbers. By convention whenever $d$ is not divisible by $t$, the corresponding Hurwitz number is assumed to be zero. Then we have:
\begin{equation}
          H_r^{\bullet, [t]} \bigl( \mu \bigr) = \frac{2}{d!^2}
         \sum_{1 \leq m \leq \binom{d}{2}} 
         C_1^{\bullet}((\mu, (t^{d/t})), m) \cdot m^{2g - 2 + \frac{d}{t} + \ell(\mu)}
\end{equation}

In particular for the large genus asymptotics we have:

\begin{equation}
    H_r^{\circ, [t]} \bigl( \mu \bigr) \sim H_r^{\bullet, [t]} \bigl( \mu \bigr) \sim \frac{2}{d!^2} \binom{d}{2}^{2g - 2 + \frac{d}{t} + \ell(\mu)} 
    \quad \text{ as } g \to \infty.
\end{equation}
Again, the only differences with the normalisation chosen in \cite{DHR} is the division by the product of the parts of the partitions $\mu$ and $\nu$, which accounts for a prefactor of 
\begin{equation}
    \frac{1}{(\prod_i \mu_i \prod_j \nu_j)} = \frac{1}{ t^{d/t} \cdot \prod_i \mu_i},
\end{equation}
and the division by the prefactor $(d/t)!$ instead of $d!$, which arises by the convention in \cite{DHR} of not labelling the preimages of the partitions $\nu$.

\subsection{On the large genus of the Gromov--Witten theory of \texorpdfstring{$\mathbb{CP}^{1}$}{the Riemann sphere}}
\label{sec:GW}
A.~Okounkov and R.~Pandharipande have solved the Gromov--Witten theory of smooth complex algebraic curves in the trilogy \cite{OP1,OP2,OP3}.
One of the key ingredients is the Gromov--Witten / Hurwitz correspondence, that over $\mathbb{CP}^1$ can be expressed as follows.

Define the stationary Gromov--Witten correlators of genus $g$ over the Riemann sphere, relative to two partitions $\mu$ and $\nu$ over $0$ and $\infty$ as
\begin{equation}
    \Big\langle \mu \big | \tau_{k_1} \cdots \tau_{k_n} \big | \nu \Big\rangle^{\circ, \mathbb{CP}^1}_{g} 
    :=
    \int_{[\overline{\mathcal{M}}_{g,n}(\mathbb{CP}^1, \mu, \nu)]^{\mathrm{vir}}}
    \prod_{i=1}^n \psi_i^{k_i} \mathrm{ev}_i^*(\omega)
\end{equation}
where we refer to the original papers for the notation details. (A specialisation of) the Gromov--Witten / Hurwitz correspondence can be expressed as 
\begin{align}
\label{eq:discGWP1}
    \Big\langle \mu \big | \tau_{2}^{m_2} \tau_{3}^{m_3} \dots \big | \nu \Big\rangle^{\bullet, \mathbb{CP}^1}_g
    = 
    \frac{1}{\mathfrak{z}(\mu)\mathfrak{z}(\nu)}\sum_{\lambda\vdash d} \frac{\chi_{\lambda}(\mathcal{C}_{\mu})\,\chi_{\lambda}(\mathcal{C}_{\nu})}{d!^2} \prod_{s \ge 1} \biggl(\frac{\overline{f}_{s+1}(\lambda)}{s!}\biggr)^{m_s}
\end{align}
where $\mathfrak{z}(\mu) = \prod_{i\geq 1} \bigl(m_i(\mu)!\,i^{m_i(\mu)}\bigr)$ is the stabilizer of the partition $\mu$ and $m_i(\mu)=\#\lbrace j\geq 1:\, \mu_j=i\rbrace$ is the multiplicity of $i$ in $\mu$.
(see Eq.~(3.1) in~\cite{OP1}) under the dimension constraint
    \begin{equation}
    \label{eq:dimconstraint}
    \sum_{s=2}^\infty \, s \, m_s \,=\, 2g - 2 + \ell(\mu) + \ell(\nu).
    \end{equation}

Hence, Lemma~\ref{lemma:CC} can be applied to this situation too to obtain the following theorem.

\begin{theorem} The stationary Gromov--Witten correlators of $\mathbb{CP}^1$ (relative to two partitions) are asymptotically 
    \begin{equation}
        \Big\langle \mu \big | \tau_{2}^{m_2} \tau_{3}^{m_3} \dots \big | \nu \Big\rangle^{\circ, \mathbb{CP}^1}_g 
        \sim
        \frac{2}{\mathfrak{z}(\mu)\,\mathfrak{z}(\nu)\,d!^2} 
        \prod_{s \ge 1}
        \left( \frac{M_{d,s}}{s!} \right)^{m_s}
    \end{equation}
    as $g\to+\infty$ in the regime where $m_k=0$ for $k > K$ for any fixed $K$ independent of $g$.
    We recall that, $M_{d,s}$ is defined in~\eqref{eq:Mds}.
\end{theorem}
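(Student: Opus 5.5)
The plan is to argue exactly as in the proof of Theorem~\ref{thm:large:g:completed}, now with a product of several completed-cycle eigenvalues in place of a single power. By~\eqref{eq:discGWP1}, the possibly disconnected correlator is a finite linear combination, over $\lambda\vdash d$, of the products $\prod_{s}\overline f_{s+1}(\lambda)^{m_s}$. The coefficients $\chi_\lambda(\mathcal C_\mu)\chi_\lambda(\mathcal C_\nu)/(\mathfrak z(\mu)\mathfrak z(\nu)d!^2)$ do not depend on $g$. Since $m_k=0$ for $k>K$, only finitely many exponents $m_1,\dots,m_K$ occur. By~\eqref{eq:dimconstraint}, $g\to\infty$ forces $\sum_s s\,m_s\to\infty$, so at least one $m_s$ is unbounded.

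The key step is to compare each $\lambda$ with $(d)$ and $(1^d)$. Lemma~\ref{lemma:CC} (for $s+1>1$) gives $|\overline f_{s+1}(\lambda)|<|\overline f_{s+1}((d))|=M_{d,s}$ for every $\lambda\neq(d),(1^d)$. The $s=1$ case is covered by the same monotonicity argument, since $\overline f_2$ is the content sum. Because there are finitely many $s\le K$ and finitely many $\lambda$, there is a uniform ratio
\[
q:=\max_{\lambda\neq(d),(1^d)}\;\max_{s\le K}\frac{|\overline f_{s+1}(\lambda)|}{M_{d,s}}<1 .
\]
Hence the contribution of $\lambda$ relative to $\prod_s M_{d,s}^{m_s}$ is bounded by $q^{\sum_s m_s}\to 0$. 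One caveat: if some $\overline f_{s+1}(\lambda)$ vanishes, that term is simply zero, which is harmless. By~\eqref{eq:Mdsmax}, the two extremal partitions contribute
\[
\frac{1}{\mathfrak z(\mu)\mathfrak z(\nu)d!^2}\Bigl(1+(-1)^{2d-\ell(\mu)-\ell(\nu)+\sum_s s m_s}\Bigr)\prod_s\Bigl(\frac{M_{d,s}}{s!}\Bigr)^{m_s}.
\]
Here I use $\chi_{(1^d)}(\mathcal C_\mu)=(-1)^{d-\ell(\mu)}$. The sign exponent is even by~\eqref{eq:dimconstraint}, so the bracket equals $2$, which gives the claimed asymptotics for the disconnected correlator.

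The remaining step, and the main obstacle, is passing from disconnected to connected correlators. I would use the logarithm relation, with the same bookkeeping as in Proposition~\ref{thm:structure:connected}. A term with $k\ge2$ components has degrees $d_i<d$ and splits the insertions among the components. Each factor is bounded by a constant times $\prod_s M_{d_i,s}^{m_s^{(i)}}$, where $M_{d_i,s}\le M_{d-1,s}<M_{d,s}$. The number of ways to distribute the insertions grows only polynomially in $g$, because $K$ is fixed. Therefore these terms are $O\bigl(g^{C}\,q'^{\sum m_s}\prod_s M_{d,s}^{m_s}\bigr)$ with $q'<1$, which is negligible. The monotonicity $M_{d-1,s}<M_{d,s}$ needs a short check: for odd $s$ it follows because $M_{d,s}$ is increasing in $d$, and for even $s$ one uses the absolute values. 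With that check done, connected and disconnected correlators share the stated leading asymptotics.
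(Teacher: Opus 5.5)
Your treatment of the disconnected correlator is the paper's argument: expand via \eqref{eq:discGWP1}, isolate $(d),(1^d)$ with Lemma~\ref{lemma:CC}, and get the factor $2$ from \eqref{eq:dimconstraint}. The uniform ratio $q$ is a good way to make this precise, provided you divide by $|M_{d,s}|$.

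The gap is the passage to connected correlators, which the paper only asserts, and which in fact fails for the geometric connected invariants. Formula \eqref{eq:discGWP1}, with the constants $c_{s+1}$ inside $\overline f_{s+1}$, holds for the Okounkov--Pandharipande disconnected theory. That theory allows connected components of degree $0$. A collapsed component of genus $h\geq1$ carries exactly one insertion $\tau_{2h-2}(\omega)$ and contributes $c_{2h-1}/(2h-1)!$; this is exactly where the constant terms of the completed cycles come from.

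So it is not true that every term with $k\geq2$ components has all $d_i<d$, and the collapsed-component terms are not small. Write $\overline f_{s+1}=\overline f^{\,0}_{s+1}+\frac{c_{s+1}}{s+1}$. Summing over which insertions sit on collapsed components reproduces $\prod_s\bigl(\overline f^{\,0}_{s+1}/s!+c_{s+1}/(s+1)!\bigr)^{m_s}$, so discarding them replaces $\overline f_{s+1}$ by $\overline f^{\,0}_{s+1}$. The connected correlator therefore behaves like $\frac{2}{\mathfrak z(\mu)\mathfrak z(\nu)d!^2}\prod_s\bigl(M^0_{d,s}/s!\bigr)^{m_s}$ with $M^0_{d,s}=M_{d,s}-\frac{c_{s+1}}{s+1}$. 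This differs from the claimed formula by $\prod_{s\text{ even}}\bigl(M^0_{d,s}/M_{d,s}\bigr)^{m_s}\neq1$ as soon as some even $s$ occurs, e.g.\ $\tau_2$.

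A degree-$1$ check: $\overline f_3((1))/2!=\frac1{24}+\frac{7}{5760}$, whereas $\langle(1)|\tau_2|(1)\rangle^{\circ,\mathbb{CP}^1}_1=\frac1{24}$. The difference $\frac{7}{5760}=c_3/3!$ is the genus-$2$ collapsed component. With the $\zeta$-term, the statement is only correct when $m_s=0$ for every even $s$.

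Second, even among positive-degree components your count is wrong. The insertions are labelled, so each splitting $m_s=m_s^{(1)}+\dots+m_s^{(k)}$ carries the multinomial $\prod_s\binom{m_s}{m_s^{(1)},\dots,m_s^{(k)}}$. This is the analogue of $\binom{r}{r_1\,\cdots\,r_k}$ in Proposition~\ref{thm:structure:connected}, and it is exponential in $g$; only the number of multiplicity vectors is polynomial. After summing, the $k$-component terms have size $\prod_s\bigl(\sum_i M^0_{d_i,s}\bigr)^{m_s}$.

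So what you need is superadditivity, $\sum_{i=1}^k M^0_{d_i,s}<M^0_{d,s}$ for $d_i\geq1$ and $k\geq2$, not monotonicity in $d$. Monotonicity gives nothing: for $s=1$, $d=4$, bounding both factors by $M_{3,1}=3$ yields $(3+3)^{m_1}=M_{4,1}^{m_1}$, with no decay. Superadditivity does hold. With $F(x)=(x-\frac12)^{s+1}-(-\frac12)^{s+1}$, convexity gives $F(a+b)-F(b)\geq F(a+1)-F(1)>F(a)$ for integers $a,b\geq1$.

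Finally, your remark that ``for even $s$ one uses the absolute values'' hides a real restriction on the disconnected side as well. Since $|c_{s+1}|\approx 2(s+1)!/(2\pi)^{s+2}$, the constant eventually dominates. For $d=3$, $s=40$ one has $41\,M_{3,40}\approx 6.4\cdot10^{14}$, but $41\,\overline f_{41}((2,1))\approx-2.0\cdot10^{16}$. So Lemma~\ref{lemma:CC} fails there. For $\overline f^{\,0}$, by contrast, its monotonicity argument works for every $s$.
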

\begin{proof}
    The proof is similar to the one given above to derive the large genus asymptotics of Hurwitz numbers with completed cycles. Indeed, due to the dimensional constraint~\eqref{eq:dimconstraint}, in the asymptotic regime in the statement at least one of the $m_i$ is going to $+\infty$.
    Then, Lemma~\ref{lemma:CC} implies that in the sum over partitions in the right-hand side of~\eqref{eq:discGWP1} the terms with $\lambda\not=(1^d),(d)$ are exponentially smaller than the terms with $\lambda=(1^d),(d)$ and so
    \begin{equation}
    \begin{aligned}
    &
    \Big\langle \mu \big | \tau_{2}^{m_2} \tau_{3}^{m_3} \dots \big | \nu \Big\rangle^{\bullet, \mathbb{CP}^1}_g 
    \\
    &\qquad\sim 
    \frac{1}{\mathfrak{z}(\mu)\,\mathfrak{z}(\nu)\,d!^2} \biggl[\prod_{s \ge 1} \Bigl(\frac{\overline{f}_{s+1}\bigl((d)\bigr)}{s!}\Bigr)^{m_s} + (-1)^{\ell(\mu)+\ell(\nu)}\prod_{s \ge 1} \Bigl(\frac{\overline{f}_{s+1}\bigl((1^d)\bigr)}{s!}\Bigr)^{m_s}\biggr]
    \\
    &\qquad=\frac{1+(-1)^{\ell(\mu)+\ell(\nu)+\sum_{s=1}^\infty sm_s}}{\mathfrak{z}(\mu)\,\mathfrak{z}(\nu)\,d!^2} \prod_{s \ge 1} \Bigl(\frac{\overline{f}_{s+1}\bigl((d)\bigr)}{s!}\Bigr)^{m_s}=\frac 2{d!^2} \prod_{s \ge 1} \Bigl(\frac{\overline{f}_{s+1}\bigl((d)}{s!}\bigr)\Bigr)^{m_s},
    \end{aligned}
    \end{equation}
    where we used the identity $\overline f_{s+1}\bigl((1^d)\bigr)=(-1)^s\overline f_{s+1}\bigl((d)\bigr)$ and~\eqref{eq:dimconstraint}, which shows that
    \begin{equation}
        \Big\langle \mu \big | \tau_{2}^{m_2} \tau_{3}^{m_3} \dots \big | \nu \Big\rangle^{\bullet, \mathbb{CP}^1}_g 
        \sim
        \frac{2}{\mathfrak{z}(\mu)\,\mathfrak{z}(\nu)\,d!^2} 
        \prod_{s \ge 1}
        \left(\frac{M_{d,s}}{s!}\right)^{m_s}.
    \end{equation}
    The statement then follows by observing that the large genus asymptotics of connected and possibly disconnected numbers coincide by the inclusion-exclusion formula.
\end{proof}

We conclude the paper with a couple of remarks on possible generalisations.

\begin{remark}
    Exploiting the degeneration formula in \cite{OP3}, one can extend the formula above to establish the large genus asymptotics of the stationary sector of the Gromov--Witten theory of an arbitrary smooth algebraic curve. This is achieved by reducing an arbitrary correlator of the stationary sector with $n$ descendant insertions to a genus $g$ component with no Gromov--Witten insertions with attached $n$ Riemann spheres, each with a single Gromov--Witten insertion and one relative condition \textemdash\; with kissing condition to the main genus $g$ component \textemdash\; and eventually multiplying  the corresponding contributions.
\end{remark}

\begin{remark}
    As apparent from the Burnside formula and as observed in \cite{XiangLi}, Hurwitz numbers enumerating coverings of a higher genus target $h$ can be obtained from the genus zero target ones by multiplying by the prefactor of $d!^{2h}$ and modifying the Riemann--Hurwitz count according to the transformation $r \to r - 2dh$.
\end{remark}

\appendix
\section{Jucys--Murphy elements, symmetric polynomials, Stirling numbers, and shifted Frobenius coordinates}
\label{sec:Jucys}

\subsection{Jucys--Murphy elements}

Let $\mathfrak{S}_d$ be the symmetric group on $d$ letters and $\mathbb{Q}[\mathfrak{S}_d]$ its group
algebra.
The \emph{Jucys--Murphy elements} are defined as
\[
\mathcal{J}_1 := 0, \qquad 
\mathcal{J}_k := \sum_{i=1}^{k-1} (i\,k), \qquad k=2,\dots,d,
\]
where $(i\,k)$ denotes the transposition exchanging $i$ and $k$. The elements $\mathcal{J}_1,\dots,\mathcal{J}_d$ pairwise commute and generate a maximal commutative
subalgebra of $\mathbb{Q}[\mathfrak{S}_d]$ and any symmetric polynomial evaluated at the Jucys--Murphy polynomials belong to the center of the group algebra. In any irreducible representation $V^\lambda$ of $\mathfrak{S}_d$, they act diagonally on the idempotent basis with eigenvalues given by the evaluation at the \emph{content}
$j-i$ of the boxes $(i,j)$ of the Young diagram $Y_\lambda$. See Jucys's theorem \cite{Jucys}.

Concretely, this implies that for any symmetric polynomial $F$ in $d$ variables, one has the following action of the center of the group algebra $\mathcal{Z}\mathbb{Q}[\mathfrak{S}_d]$ on the idempotent basis $\{\mathcal{F}_\lambda\}_{\lambda \vdash d}$, see~\eqref{eq:idempotent}:
\begin{equation}\label{eq:Jucys:eigen}
F(\mathcal{J}_1, \mathcal{J}_2,\dots,\mathcal{J}_d). \mathcal{F}_{\lambda}
=
F\bigl(\{j-i\}_{(i,j)\in Y_\lambda}\bigr)\,\mathcal{F}_{\lambda}, 
\end{equation}

\subsection{Elementary and complete symmetric polynomials}

Let $x_1,x_2,\dots$ be commuting variables.
The \emph{elementary symmetric polynomials} $e_k$ and the
\emph{complete homogeneous symmetric polynomials} $h_k$ are defined by the generating
series
\[
E(z) := \sum_{k\ge 0} e_k z^k = \prod_{i\ge 1} (1+x_i z),
\qquad
H(z) := \sum_{k\ge 0} h_k z^k = \prod_{i\ge 1} \frac{1}{1-x_i z}.
\]

Evaluating these symmetric polynomials at the Jucys--Murphy elements
$x_i = \mathcal{J}_i$ produces distinguished central elements of $\mathbb{Q}[\mathfrak{S}_d]$,
whose expansions in the class algebra are governed by Stirling numbers.

\subsection{Stirling numbers of the first and second kind}

The \emph{Stirling numbers of the first kind}
\(
\biggl[\begin{matrix} n \\ k \end{matrix}\biggr]
\)
are defined by the identity
\[
x(x-1)(x-2)\cdots(x-n+1)
=
\sum_{k=0}^n (-1)^{n-k}
\biggl[\begin{matrix} n \\ k \end{matrix}\biggr] x^k,
\]
and count permutations of $n$ elements with exactly $k$ cycles. The \emph{Stirling numbers of the second kind}
\(
\biggl\{\begin{matrix} n \\ k \end{matrix}\biggr\}
\)
are defined by
\[
x^n
=
\sum_{k=0}^n
\biggl\{\begin{matrix} n \\ k \end{matrix}\biggr\}
x(x-1)\cdots(x-k+1),
\]
and count set partitions of an $n$-element set into $k$ nonempty blocks. Their exponential generating functions are
\[
\sum_{n\ge k}
\biggl[\begin{matrix} n \\ k \end{matrix}\biggr]
\frac{z^n}{n!}
=
\frac{1}{k!}\bigl(\log(1+z)\bigr)^k,
\qquad
\sum_{n\ge k}
\biggl\{\begin{matrix} n \\ k \end{matrix}\biggr\}
\frac{z^n}{n!}
=
\frac{1}{k!}\bigl(e^z-1\bigr)^k.
\]

\subsection{Jucys--Murphy elements and Stirling numbers}

A fundamental result of Jucys states that evaluating elementary and complete
homogeneous symmetric polynomials at the Jucys--Murphy elements yields Stirling
numbers in the class algebra:
\[
e_k(\mathcal{J}_1,\dots,\mathcal{J}_d)
=
\sum_{m=0}^k
(-1)^{k-m}
\biggl[\begin{matrix} d-m \\ d-k \end{matrix}\biggr]
\sum_{\substack{\sigma\in \mathfrak{S}_d \\ \ell(\sigma)=m}} \sigma,
\]
\[
h_k(\mathcal{J}_1,\dots,\mathcal{J}_d)
=
\sum_{m=0}^k
\biggl\{\begin{matrix} d-m \\ d-k \end{matrix}\biggr\}
\sum_{\substack{\sigma\in \mathfrak{S}_d \\ \ell(\sigma)=m}} \sigma,
\]
where $\ell(\sigma)$ denotes the number of cycles of $\sigma$.

Equivalently, at the level of generating series one has
\[
\prod_{i=1}^d (1+z \mathcal{J}_i)
=
\sum_{k\ge 0} z^k e_k(\mathcal{J}_1,\dots,\mathcal{J}_d),
\qquad
\prod_{i=1}^d \frac{1}{1-z \mathcal{J}_i}
=
\sum_{k\ge 0} z^k h_k(\mathcal{J}_1, \dots, \mathcal{J}_d),
\]
which makes explicit the appearance of Stirling numbers in the expansion of
symmetric functions of Jucys--Murphy elements.

\subsection{Evaluation on natural numbers}

A useful classical parallel is obtained by evaluating elementary and complete
homogeneous symmetric polynomials on the alphabet of natural numbers
$\{1,2,\dots,n\}$.
One has the identities
\[
e_k(1,2,\dots,n)
=
\biggl[\begin{matrix} n+1 \\ n+1-k \end{matrix}\biggr],
\qquad
h_k(1,2,\dots,n)
=
\biggl\{\begin{matrix} n+k \\ n \end{matrix}\biggr\},
\]
which provide another realization of Stirling numbers of the first and second kind.

Equivalently, the generating series
\[
\prod_{i=1}^n (1+i z)
=
\sum_{k\ge 0}
\biggl[\begin{matrix} n+1 \\ n+1-k \end{matrix}\biggr] z^k,
\qquad
\prod_{i=1}^n \frac{1}{1-i z}
=
\sum_{k\ge 0}
\biggl\{\begin{matrix} n+k \\ n \end{matrix}\biggr\} z^k
\]
make explicit the complete analogy between evaluation on Jucys--Murphy elements
and on the natural numbers

\subsection{Shifted Frobenius coordinates}
\label{sec:Shifted:Frob}

Frobenius coordinates and their shifted counterpart provide a very convenient way to deal with partitions. 
\begin{definition}
The Frobenius coordinates of a partition $\lambda = (\lambda_1, \lambda_2, \lambda_3, \dots)$ is the $(2R+1)$-uple of non-negative integers
$$
\biggl(R; a_1, \dots, a_R; b_1, \dots, b_R\biggr), \qquad a_1 > a_2 > \dots a_R \geq 0, \qquad 0 \leq b_1 < b_2 < \dots b_R,
$$
obtained in the following way.
(When we want to emphasize the dependence of these quantities on $\lambda$, we write $R(\lambda)$, $a_i(\lambda)$, $b_i(\lambda)$.)
First, $R$ is the number of boxes in the principal diagonal of the Young diagram  $Y_{\lambda}$ (namely, the boxes of the form $(i,i)$) and let $x_1, \dots, x_R$ the boxes of the diagonal. Let $a_i$ be the arm-length of $x_i$ and let $b_i$ be the leg-length of $x_i$, namely, denoting $\lambda^T$ the partition corresponding to the transposed Young diagram of $\lambda$, we have
\begin{equation}
    a_i = \lambda_i - i \quad \text{ and } \quad b_i = \lambda_i^T - i, \qquad i = 1, 2, \dots, R(\lambda).
\end{equation}

A representation of the Frobenius coordinates is depicted in the figure below.
\begin{example}
Consider \(\lambda=(6,5,3,1)\). Then $R(\lambda) = 3$ and the shifted Frobenius coordinates are 
\begin{equation}
a_1' = 11/2, \; a_2' = 7/2, \; a_3' = 1/2, \qquad b_1' = 7/2, \; b_2' = 3/2, \; b_3' = 1/2
\end{equation}.
\[
\YoungDiagramFrob{6,5,3,1}
\]
\end{example}

\end{definition}

Note that the partition condition $\lambda_1 \geq \lambda_2 \geq \dots $ implies the inequalities between the $a_i$ and between the $b_i$ above. Moreover note that

\begin{equation}\label{eq:Frobcoordsum}
|\lambda| = \sum_{i}\lambda_i=  R + \sum_{i=1}^R a_i + b_i.
\end{equation}
On the other hand, given such $(2R+1)$-uple of integers, one reconstructs a Young tableau uniquely. Therefore Frobenius coordinates give a bijection between the set of all partitions and the set of all such tuples. One could reabsorb the summand $R$ in \ref{eq:Frobcoordsum} by shifting each of the coordinates by one half. The shifted Frobenius coordinates
$$
(R; a_1', \dots, a_R'; b_1', \dots, b_R')
$$
are simply defined from the standard Frobenius ones by 
\begin{equation}
    a_i' = a_i + \frac{1}{2}, \qquad b_i' = b_i + \frac{1}{2},
\end{equation} 
so that
$$
|\lambda| = \sum_{i=1}^R a_i' + b_i'.
$$

\end{document}